\newtheorem{lemma}{Lemma}
\newtheorem{corollary}{Corollary}
\newtheorem{definition}{Definition}
\newtheorem{theorem}{Theorem}
\theoremstyle{definition}
\newtheorem{example}{Example}
\newtheorem{remark}{Remark}
\numberwithin{equation}{section}
\numberwithin{lemma}{section}
\numberwithin{corollary}{section}
\numberwithin{theorem}{section}
\numberwithin{proposition}{section}
\numberwithin{problem}{section}
\numberwithin{remark}{section}
\numberwithin{definition}{section}
\numberwithin{assumption}{section}
\begin{document}

\title{Hamilton--Jacobi equations on an evolving surface}
\author{Klaus Deckelnick}
\address{Institut f\"ur Analysis und Numerik, Otto-von-Guericke-Universit\"at Magdeburg,
39106 Magdeburg, Germany}
\email{Klaus.Deckelnick@ovgu.de}
\author{Charles M. Elliott} 
\address{Mathematics Institute, Zeeman Building, University of Warwick, Coventry. CV4 7AL. UK}
\email{C.M.Elliott@warwick.ac.uk}
\author{Tatsu-Hiko Miura}
\address{Graduate School of Mathematical Sciences, The University of Tokyo, 
3-8-1 Komaba, Meguro, Tokyo, 153-8914 Japan}
\email{thmiura@ms.u-tokyo.ac.jp}
\author{Vanessa Styles}
\address{School of Mathematical and Physical Sciences,  University of Sussex, 
Brighton, BN1 9QH}
\email{v.styles@sussex.ac.uk}
\thanks{
The work of CME was partially supported by the Royal Society via a Wolfson Research Merit Award.
The work of THM was partially supported by Grant--in--Aid for JSPS Fellows No. 16J02664
and the Program for Leading Graduate Schools, MEXT, Japan.}

\subjclass[2010]{Primary 65M08, 35F21, 35D40}

\date{}

\begin{abstract} We consider the well-posedness and numerical approximation of a Hamilton--Jacobi 
equation on an evolving hypersurface in $\mathbb R^3$. 
Definitions  of viscosity sub- and supersolutions are extended in a natural way  to evolving hypersurfaces and 
provide uniqueness by comparison. An explicit in time monotone numerical approximation is derived on evolving interpolating triangulated surfaces.
 The scheme relies on a finite volume discretisation which does not require acute triangles. The scheme is shown to be stable and consistent leading to an existence proof via the proof of convergence. Finally an error bound is proved of the same order as in the flat stationary case.
\end{abstract}
\maketitle

\section{Introduction} \label{S:Introduction}
 It is natural  to study the development of a theory of viscosity solutions and their numerical approximation to first order equations on evolving surfaces which may be useful in the modelling of transport on moving surfaces, for example in material science and cell biology. In this paper we are concerned with the existence, uniqueness and numerical approximation  of Hamilton--Jacobi equations on 
 moving hypersurfaces.  
Let  $\Gamma(t), \, t\in[0,T]$ be a family of smooth, closed, connected and  oriented hypersurfaces in 
$\mathbb{R}^3$ and $S_T:=\bigcup_{t\in(0,T)}\Gamma(t)\times\{t\}$. We consider
 the  following Hamilton--Jacobi equation on the evolving surfaces $\Gamma(t)$
\begin{equation}  \label{HJeqn}
\partial^\bullet u + H(x,t,\nabla_{\Gamma}u)=0 \qquad \mbox{ on } S_T.
\end{equation}
In the above, $\partial^\bullet u= u_t + v_{\Gamma} \cdot \nabla u$ denotes the material derivative, $ v_{\Gamma}$ denotes the  velocity of a parametrisation of $\Gamma$,
and $\nabla_{\Gamma}u =(I_3 - \nu \otimes \nu) \nabla u$ the tangential gradient of $u$,
where $\nu$  is a unit normal field of $\Gamma(t)$
respectively. The precise definitions and assumptions on $H:S_T \times \mathbb{R}^3
\rightarrow \mathbb{R}$ will be given in Sections 2 and 3. 
The well-posedness theory is developed using the concept of viscosity solutions extended to evolving curved hypersurfaces.  Having defined the concept of viscosity solution, uniqueness is proved using comparison and  existence is achieved through proving convergence of  explicit in time finite volume discretisations on evolving triangulations. We prove an error bound which is of the same order as that proved in the seminal work of Crandall and Lions, \cite{CraLio84}, concerning finite difference approximations on flat domains.  In particular we allow for non-acute triangulations of surfaces because in practical computations  initially acute evolving triangulations may lose acuteness.

\subsection{Background}
 Partial differential equations on time evolving hypersurfaces arise in many applications in biology, fluids and materials science,  
 for example see \cite{EllStiVen12, BarGarNur15, JanOlsReu17, EilEll08, BreDuEll16} and the references cited therein. The theory of parabolic equations has been 
 considered in \cite{DziEll07-a, Vie14, AlpEllSti15a, AlpEllSti15}. Existence and uniqueness  of first order  scalar conservation laws on  
 moving hypersurfaces and Riemannian manifolds  
has been proved in \cite{DziKroMul13, LenMul13}.  Viscosity solutions of Hamilton--Jacobi equations on Riemannian manifolds are considered in \cite{ManMen03}. See \cite{CheBurMer02} and \cite{MacRuu08} for  level set approaches to the motion of curves  on a stationary surface. Numerical transport on evolving surfaces by level set methods was considered in \cite{AdaSet03, XuZha03}. The numerical analysis of advection diffusion equations on evolving surfaces via the evolving surface finite element method began  in \cite{DziEll07-a}, see also \cite{DE13Acta, KovPow15}. Finite volume schemes for diffusion and conservation laws on moving surfaces have been 
considered, respectively,  in \cite{LenNemRum11} and \cite{GM14NM}. Other approaches  involve  diffuse interfaces, see \cite{TeiLiLow09}, or trace finite elements, \cite{OlsReu14}.

\subsection{An  example} \label{s:motivating}
  One motivation for considering Hamilton--Jacobi equations of the form (\ref{HJeqn})  is to consider the motion of curves on an evolving surface.  Consider the motion of a closed curve $\gamma(t) \subset
\Gamma(t)$ according to the evolution law
\begin{equation}  \label{evlaw}
V_\mu(x,t) = F(x,t) +\beta (x,t)\cdot \mu (x,t), \qquad x \in  \gamma(t),
\end{equation}
where $V_\mu$ denotes the velocity of $\gamma(t)$ in the direction of the conormal $\mu$
and $F:S_T \rightarrow \mathbb{R}, ~\mbox{and}~ \beta : S_T \rightarrow \mathbb{R}^3$ are a given function and vector field.  Let us assume that 
\begin{displaymath}
\gamma(t) = \{(x,t)\in S_T \mid u(x,t)=r\}
\end{displaymath}
for some $r \in \mathbb{R}$ with a function $u:N_T \rightarrow \mathbb{R}$  satisfying $\nabla_\Gamma u(\cdot,t)\neq 0$ on $\gamma(t)$,
where $N_T$ is an open neighbourhood of $S_T$.   Choosing  parametrizations
$\varphi(\cdot,t): S^1 \rightarrow \mathbb{R}^3$ of $\gamma(t)$ we have that
$u(\varphi(s,t),t)=r$ for $s \in S^1, t \in (0,T)$. If we differentiate  both sides with respect to $t$ we obtain
\begin{align*}
u_t(\varphi(s,t),t)+\varphi_t(s,t)\cdot\nabla u(\varphi(s,t),t) = 0,
\end{align*}
or equivalently
\begin{eqnarray*}
0 & = & \partial^\bullet u(\varphi(s,t),t) + \bigl( \varphi_t(s,t)- v_{\Gamma}(\varphi_t(s,t),t) \bigr) \cdot
\nabla u(\varphi(s,t),t) \\
& = &  \partial^\bullet u(\varphi(s,t),t) + \bigl( \varphi_t(s,t)- v_{\Gamma}(\varphi_t(s,t),t) \bigr) \cdot
\nabla_{\Gamma}  u(\varphi(s,t),t),
\end{eqnarray*}
since $\varphi(s,t) \in \Gamma(t)$ implies that $(\varphi_t(s,t) - v_{\Gamma}(\varphi(s,t),t))
\cdot \nu(\varphi(s,t),t)=0$. Using that $\mu=\frac{\nabla_{\Gamma}u}{| \nabla_{\Gamma} u |}$
we obtain from (\ref{evlaw}) that at $x=\varphi(s,t)$
\begin{displaymath}
F(x,t)+\beta (x,t) \cdot \mu (x,t)= V_{\mu}(x,t)= \varphi_t(s,t) \cdot \frac{\nabla_{\Gamma}u(x,t)}{| \nabla_{\Gamma} u(x,t) |}
= - \frac{\partial^\bullet u(x,t)}{| \nabla_{\Gamma} u(x,t) |} + v_{\Gamma}(x,t) \cdot
\frac{\nabla_{\Gamma}u(x,t)}{| \nabla_{\Gamma} u(x,t) |}.
\end{displaymath}
Formally the above calculations then show that the level sets of a solution $u$ of (\ref{HJeqn})
with
\begin{equation}   \label{E:Intro_Hamiltonian_Form}
H(x,t,p)= F(x,t) \, | p | +\beta(x,t) \cdot p- v_{\Gamma}(x,t) \cdot p
\end{equation}
evolve according to the evolution law (\ref{evlaw}). Model examples of curve evolution on a given  moving surface are presented in Section 7.

\subsection{Outline}
The paper is organized as follows. We begin in Section 2 by establishing some notation and concepts relating to moving surfaces. 
In Section 3 we generalise the classical definition of viscosity solution  (see e.g.  \cite{BC97book, G06book, Bar13}) to moving curved 
domains using surface derivative operators instead of the usual
derivatives. In this setting we show  that a comparison principle holds which yields uniqueness of a 
viscosity solution.  As in the work \cite{CraLio84} we approach existence via a discretisation in space and time. To do so, 
we approximate the moving surfaces by triangulated surfaces so that we need to
formulate our numerical scheme on unstructured meshes.
Numerical schemes for Hamilton--Jacobi equations on unstructured meshes on flat domains have been proposed in
  \cite{KMS99} and  \cite{LiYanChan03}. In order to guarantee monotonicity of their schemes the
  authors in  \cite{KMS99},  \cite{LiYanChan03} have to assume that the underlying triangulation is acute, which is a rather strong requirement and
 difficult to realise  in the case of moving surfaces where the triangulation will vary from time step to time step.
In order to address this issue  we construct  in Section 4 a finite volume scheme by adapting  an idea 
introduced by Kim and Li in \cite{KL15JCM} 
to the case of evolving hypersurfaces. With this construction which allows non-acute triangles we are able to prove monotonicity and
consistency  assuming only regularity of the  triangulation.
In Section 5 we prove that the sequence of discrete
solutions obtained via our scheme converges to a viscosity solution  if the discretization parameters
tend to zero. At  the same time this gives 
an existence result for  the Hamilton--Jacobi equation. We prove in Section 6 an $O(\sqrt{h})$
error bound between the viscosity solution and the numerical solution extending well--known 
error estimates for the flat case to the case of moving hypersurfaces. Finally in Section 7 we present some model numerical examples and discuss numerical issues.

\section{Preliminaries} \label{S:Preliminaries}

\subsection{Tangential derivatives of functions on fixed surfaces} \label{SS:Tan_Grad_Fixed}
Let $\Gamma$ be a smooth, closed (i.e. compact without boundary) and orientable
hypersurface in $\mathbb{R}^3$ with outward unit normal field $\nu$.  For a differentiable function $f$ on $\Gamma$ 
we define the tangential gradient by 
  \begin{align} \label{E:Tangential_Gradient}
    \nabla_\Gamma f(x) := P_\Gamma(x)\nabla\tilde{f}(x), \quad x\in \Gamma,
  \end{align}
where $\tilde{f}$ is a smooth extension of $f$ to an open neighbourhood $N$  of $\Gamma$
satisfying $\tilde{f}=f$ on $N \cap \Gamma$ and $P_\Gamma(x):=I_3-\nu(x)\otimes\nu(x)$ is the orthogonal projection 
onto the tangent plane of $\Gamma$ at $x$.  Here $I_3$ is the $(3 \times 3)$ identity matrix and $\nu\otimes\nu=(\nu_i\nu_j)_{i,j}$ 
where $\otimes$ denotes  the tensor product. It is well--known that $\nabla_{\Gamma}f(x)$
is independent of the particular extension $\tilde{f}$. 
Furthermore, we define by
$\Delta_{\Gamma} f:= \nabla_{\Gamma} \cdot \nabla_{\Gamma} f $
the Laplace--Beltrami operator of $f$. We denote by $d$ the signed distance 
function to $\Gamma$ oriented in such a way that it increases in the direction of $\nu$. There
exists an open neighbourhood $U$ of $\Gamma$ such that $d$ is smooth in $U$
and such that for every $x \in U$ there exists a unique $\pi(x) \in \Gamma$ with
\begin{equation}  \label{fermi}
x = \pi(x) + d(x) \nu(\pi(x)) \quad  \mbox{ and }\quad 
\nabla d(x)  =  \nu(\pi(x)).
\end{equation}
For a given function $f:\Gamma \rightarrow \mathbb{R}$ we can define  $f_c:U
\rightarrow \mathbb{R}$ via $f_c(x):=f(\pi(x))$, which extends $f$ constantly in the normal direction to
$\Gamma$. It is not difficult to verify that 
\begin{align}
   \nabla f_c(x) &= \nabla_{\Gamma} f(x), \quad x \in \Gamma, \label{E:Ext_f} \\
    \|\nabla f_c\|_{B(U)} &\leq c\|\nabla_\Gamma f\|_{B(\Gamma)}, \label{E:Estimate_Nabla_Ext}\\
    \|\nabla^2f_c\|_{B(U)} &\leq c\left(\|\nabla_\Gamma f\|_{B(\Gamma)}+\|\nabla_\Gamma^2f\|_{B(\Gamma)}\right), \label{E:Estimate_Hess_Ext}
\end{align}
provided that the derivatives of $f$ exist. Here, $\Vert f \Vert_{B(D)}:= \sup_{x \in D}| f(x)|$. \\[3mm]

\subsection{Time dependent surfaces}
Let us next turn to the case of  time dependent  surfaces and assume that $\Gamma_0$
is a closed, connected, oriented and smooth hypersurface in $\mathbb{R}^3$. We consider
a family $\{\Gamma(t)\}_{t\in[0,T]}$, $T>0$ of evolving hypersurfaces given via a smooth  flow map
$\Phi: \Gamma_0 \times [0,T] \rightarrow \mathbb{R}^3$ such that $\Phi(\cdot,t)$ is a
diffeomorphism of $\Gamma_0$ onto $\Gamma(t)$ satisfying
\begin{align} \label{E:Flow_Map}
\frac{\partial\Phi}{\partial t}(X,t) = v_\Gamma(\Phi(X,t),t), \quad   \Phi(X,0) = X, 
\end{align}
for all $X\in\Gamma_0, t \in  (0,T)$. Here we say that  $v_{\Gamma}$ is the velocity field of $\Gamma(t)$.
Let $d(\cdot,t)$ be the signed distance function to $\Gamma(t)$ increasing in the direction of $\nu(\cdot,t)$,
where $\nu(\cdot,t)$ is the unit outward normal of $\Gamma(t)$. 
For each $t\in[0,T]$ there exists a bounded open subset $N(t) \subset \mathbb{R}^3$ such that
$d$ is smooth in  $N_T:=\bigcup_{t\in(0,T)}(N(t)\times\{t\})$ and such that for every $x \in N(t)$ there
exists a unique $\pi(x,t) \in \Gamma(t)$ satisfying (\ref{fermi}). 

\noindent
Next, for a differentiable function $f$ on $S_T$, the material derivative of $f$ along the velocity $v_\Gamma$ is defined as
  \begin{align*}
    \partial^\bullet f(\Phi(X,t),t) = \frac{d}{dt}\Bigl(f(\Phi(X,t),t)\Bigr), \quad (X,t)\in\Gamma_0 \times(0,T).
  \end{align*}
The material derivative is also expressed as
\begin{align} \label{E:Material_Euler}
  \partial^\bullet f(x,t) = \partial_t\tilde{f}(x,t)+v_\Gamma(x,t)\cdot\nabla\tilde{f}(x,t), \quad (x,t)\in S_T,
\end{align}
where $\tilde{f}$ is an arbitrary extension of $f$ to $N_T$ satisfying $\tilde{f}|_{S_T}=f$. \\

\subsection{Triangulated surface}
In order to approximate the evolving surfaces $\Gamma(t)$ we choose a family of triangulations 
 $(\mathcal{T}_h(t))_{0<h <h_0}$ of $\Gamma(t)$ and set
\begin{displaymath}
\Gamma_h(t):= \bigcup_{K(t) \in \mathcal{T}_h(t)} K(t) \quad \mbox{ and } \quad  h:=\max_{t \in [0,T]}
\max_{K(t) \in \mathcal T_h(t)} h_{K(t)},
\end{displaymath}
where $h_{K(t)}=\mbox{diam}K(t)$ for each triangle $K(t)$. We assume  that the vertices  of the triangulation are advected with the velocity $v_\Gamma$ and thus the number 
of the vertices, which we refer to as $M\in\mathbb{N}$, is fixed in time.
For $i=1,\dots,M$ we call the $i$-th vertex simply $i$ and write $x_i^0\in\Gamma(0)$ for its point at $t=0$.
By the assumption on the motion of the vertices, the position of $i$ at time $t\in[0,T]$ is given by $x_i(t)=\Phi(x_i^0,t)\in\Gamma(t)$ so that the 
triangulated surfaces $\Gamma_h(t)$  are  interpolations of $\Gamma(t)$. 
In particular, $\Gamma_h(t) \subset N(t)$ if $h_0$ is
sufficiently small and we assume that $\pi_h(\cdot,t):=\pi(\cdot,t)|_{\Gamma_h(t)}$ is a homeomorphism
of $\Gamma_h(t)$ onto $\Gamma(t)$ for each $t \in [0,T]$.
Writing $\pi_h^{-1}(\cdot,t)$ for the inverse map, we define the lift  of a function $\eta:\Gamma_h(t)
\rightarrow \mathbb{R}$ onto $\Gamma(t)$ by
\begin{align*}
  \eta^l(x) := \eta(\pi_h^{-1}(x,t)), \quad x \in \Gamma(t).
\end{align*}

\noindent
We assume that the triangulations $\mathcal T_h(t)$ are regular in the sense that there exists a constant
$\gamma>0$ such that
\begin{align} \label{E:Quotient_Diam_Local}
 \forall t \in [0,T] \; \forall K(t) \in \mathcal T_h(t) \qquad h_{K(t)} \leq \gamma \rho_{K(t)},
\end{align}
where $\rho_{K(t)}$ is the radius of the largest circle contained in  $K(t)$. The existence of $\gamma$ 
follows from the Lipschitz continuity of $\Phi(\cdot,t)$ and $\Phi(\cdot,t)^{-1}$ if we suppose that
the initial triangulation is regular.
We denote by $\nu_h(\cdot,t)$ the
unit normal to $\Gamma_h(t)$ oriented in the direction in which the signed distance $d(\cdot,t)$ increases.
It is well--known that for all $K(t) \subset \Gamma_h(t)$, (c.f. \cite {DziEll07-a,DE13Acta}),
\begin{eqnarray}
\Vert d(\cdot,t) \Vert_{B(K(t))} & \leq & C h_{K(t)}^2, \label{dest} \\
\Vert \nu_{h|K(t)} -  \nu(\cdot,t) \Vert_{B(K(t))} & \leq & C h_{K(t)},  \label{nuest}
\end{eqnarray}
where we can think of $\nu(\cdot,t)$ as being extended to a neighbourhood of $\Gamma(t)$ via
$\nu(x,t)=\nabla d(x,t)$ (cf. (\ref{fermi})). \\
\noindent
For each $t\in[0,T]$ we introduce the finite element space
 \begin{displaymath}
  V_h(t) = \{u_h\in C^0(\Gamma_h(t)) \mid \text{$u_h|_{K(t)}$ is linear affine for each $K(t)\in\mathcal{T}_h(t)$}\}
\end{displaymath}
together with its standard nodal basis $\chi_1(\cdot,t),\ldots,\chi_M(\cdot,t)$, where 
$\chi_i(\cdot,t) \in V_h(t)$  satisfies $\chi_i(x_j(t),t)=\delta_{ij}$.

\noindent
For a function $\eta \in C^0(\Gamma(t))$ we define  the linear interpolation $I_h^t \eta \in V_h(t)$  by
\begin{align*}
  I_h^t \eta (x) := \sum_{i=1}^M \eta(x_i(t))\chi_i(x,t), \quad x\in\Gamma_h(t).
\end{align*}

\begin{lemma} \label{L:Interpolation}
  Suppose that $\eta\colon\Gamma(t)\to\mathbb{R}$, $t\in[0,T]$ is Lipschitz continuous, i.e. there exists a constant $L_U>0$ such that
  \begin{align} \label{E:Inter_Lip_U}
    | \eta(x)- \eta(y)| \leq L_U|x-y|, \quad x,y\in\Gamma(t).
  \end{align}
  Then we have
  \begin{align} \label{E:Sup_Diff_Inter}
     \|  \eta -[I_h^t \eta ]^l\|_{B(\Gamma(t))} \leq Ch.
  \end{align}
\end{lemma}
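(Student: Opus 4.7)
The plan is to estimate $\eta - [I_h^t\eta]^l$ pointwise by pulling back to $\Gamma_h(t)$ and exploiting that the vertices $x_i(t)$ lie exactly on $\Gamma(t)$, so the values of $\eta$ used in the interpolation coincide with values of $\eta$ at nearby points of $\Gamma(t)$.

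Fix $x\in\Gamma(t)$ and set $\tilde{x}:=\pi_h^{-1}(x,t)\in\Gamma_h(t)$. Then $\tilde{x}$ lies in some triangle $K(t)\in\mathcal{T}_h(t)$ with vertices $x_{i_1}(t),x_{i_2}(t),x_{i_3}(t)$, and I write $\tilde{x}=\sum_{k=1}^{3}\lambda_k x_{i_k}(t)$ in barycentric coordinates with $\lambda_k\in[0,1]$, $\sum_k\lambda_k=1$. By definition of the interpolation and the lift,
\begin{equation*}
 [I_h^t\eta]^l(x) = (I_h^t\eta)(\tilde{x}) = \sum_{k=1}^{3}\lambda_k\eta(x_{i_k}(t)),
\end{equation*}
so that, using $\sum_k\lambda_k=1$,
\begin{equation*}
 [I_h^t\eta]^l(x)-\eta(x) = \sum_{k=1}^{3}\lambda_k\bigl(\eta(x_{i_k}(t))-\eta(x)\bigr).
\end{equation*}

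Since $x_{i_k}(t)\in\Gamma(t)$ and $x\in\Gamma(t)$, the Lipschitz hypothesis \eqref{E:Inter_Lip_U} applies directly and yields
\begin{equation*}
 \bigl|[I_h^t\eta]^l(x)-\eta(x)\bigr| \leq L_U\sum_{k=1}^{3}\lambda_k|x_{i_k}(t)-x| \leq L_U\max_{k}|x_{i_k}(t)-x|.
\end{equation*}
I then split by the triangle inequality $|x_{i_k}(t)-x|\leq|x_{i_k}(t)-\tilde{x}|+|\tilde{x}-x|$. The first term is bounded by $h_{K(t)}\leq h$ because $\tilde{x}$ and $x_{i_k}(t)$ both belong to $K(t)$. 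For the second, note that $\tilde{x}=\pi_h^{-1}(x,t)\in\Gamma_h(t)$ satisfies $\pi(\tilde{x},t)=x$, hence by \eqref{fermi}
\begin{equation*}
 |\tilde{x}-x| = |d(\tilde{x},t)| \leq \|d(\cdot,t)\|_{B(K(t))} \leq C h_{K(t)}^2 \leq Ch^2,
\end{equation*}
using the estimate \eqref{dest}. Combining these bounds gives $|x_{i_k}(t)-x|\leq h+Ch^2\leq Ch$ for $h\leq h_0$, and therefore $\|\eta-[I_h^t\eta]^l\|_{B(\Gamma(t))}\leq CL_Uh$, which is the claim.

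The argument is essentially a standard $P^1$ interpolation estimate adapted to the triangulated surface. The only point requiring care, and the one I would expect to be most delicate to present cleanly, is the control of $|\tilde{x}-x|$: one must use that $\pi_h(\cdot,t)$ is a homeomorphism onto $\Gamma(t)$ together with the quadratic geometric error \eqref{dest} for the signed distance on each $K(t)$, so that the $O(h^2)$ normal displacement is absorbed into the $O(h)$ tangential error coming from the Lipschitz bound on the triangle.
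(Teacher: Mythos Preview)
Your proof is correct and follows essentially the same approach as the paper's: both pull back to $\tilde x\in\Gamma_h(t)$, use the partition-of-unity property of the barycentric weights to reduce to $\max_k|\eta(x)-\eta(x_{i_k}(t))|$, apply the Lipschitz bound, and then split $|x-x_{i_k}(t)|$ via the triangle inequality into an $O(h)$ in-triangle term and an $O(h^2)$ normal displacement controlled by \eqref{fermi} and \eqref{dest}. The only cosmetic difference is that the paper writes the weights as $\chi_i(\tilde x,t)$ rather than $\lambda_k$.
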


\begin{proof} Fix $x \in \Gamma(t)$. Then there exists $\tilde x  \in \Gamma_h(t)$ such that
 $x=\pi_h(\tilde x,t)$, say $\tilde x \in K(t)$ for some $K(t) \in \mathcal T_h(t)$. Assuming for
 simplicity that the vertices of $K(t)$ are $x_1(t),x_2(t)$ and $x_3(t)$ we may write
 \begin{displaymath}
 \eta(x) - [ I_h^t \eta]^l(x)= \eta(x) - \sum_{i=1}^3 \eta(x_i(t)) \chi_i(\tilde x,t)
 = \sum_{i=1}^3 \bigl( \eta(x) - \eta(x_i(t)) \bigr) \chi_i(\tilde x,t),
 \end{displaymath}
 since $\sum_{i=1}^3 \chi_i(\tilde x,t)=1$. Combining this relation with the fact that $\chi_i(\tilde x,t) \geq 0$,
 (\ref{E:Inter_Lip_U}), (\ref{fermi}) and (\ref{dest})  we deduce that
 \begin{eqnarray*}
 | \eta(x) -  [ I_h^t \eta]^l(x) | & \leq & \max_{i=1,2,3} | \eta(x) - \eta(x_i(t)) | \leq L_U \max_{i=1,2,3} | x - x_i(t) |  
  =  L_U  \max_{i=1,2,3}  | \pi(\tilde x,t) - x_i(t) |  \\
  & \leq & L_U \max_{i=1,2,3} \bigl( | \tilde x - x_i(t) | +| d(\tilde x,t)| \bigr) 
  \leq  L_U \bigl( h_{K(t)} + C h_{K(t)}^2 \bigr) \leq C h_{K(t)} \leq Ch.
 \end{eqnarray*}
 \end{proof}

\section{Viscosity solutions: Uniqueness} \label{S:Viscosity_Solutions}

We consider the Hamilton--Jacobi equation
\begin{align} \label{E:HJ_Equation}
  \begin{cases}
    \partial^\bullet u(x,t)+H(x,t,\nabla_\Gamma u(x,t)) = 0, &(x,t)\in S_T, \\
    u(x,0) = u_0(x), &x\in\Gamma(0).
  \end{cases}
\end{align}
Here $H\colon \overline{S_T}\times\mathbb{R}^3\to\mathbb{R}$ is a Hamiltonian and $u_0\colon\Gamma(0)\to\mathbb{R}$ is an initial value.
Throughout this paper we suppose that  $u_0\in C(\Gamma(0))$ and there exist positive constants $L_{H,1}$ and $L_{H,2}$ such that
\begin{align} 
  |H(x,t,p)-H(y,s,p)| & \leq L_{H,1}(|x-y|+|t-s|)(1+|p|), \label{E:Hamiltonian_Lip_xt}  \\
  |H(x,t,p)-H(x,t,q)| &  \leq L_{H,2}|p-q|  \label{E:Hamiltonian_Lip_p}
\end{align}
for all $(x,t), (y,s)\in\overline{S_T}$ and $p,q\in\mathbb{R}^3$. Furthermore, we assume for the
velocity field that $v_{\Gamma} \in C^1(\overline{S_T})$. Note that the Hamiltonian in  (\ref{E:Intro_Hamiltonian_Form}) 
satisfies the above conditions provided that $F$ and $ \beta$ 
are Lipschitz on $\overline{S_T}$. \\
For $\Gamma=\Gamma(t)$ with each fixed $t\in[0,T]$ or $\Gamma=\overline{S_T}$, we
denote by $USC(\Gamma) $ (resp. $LSC(\Gamma)$) the set of all upper (resp. lower)
semicontinuous functions on $\Gamma$.  In what follows we shall work in the framework of discontinuous viscosity solutions.

\begin{definition} \label{D:Def_Viscosity_Solutions}
Let $u_0$ be a function on $\Gamma(0)$. A locally bounded function $u \in USC(\overline{S_T})$ 
(resp. $u \in LSC(\overline{S_T})$) is 
called a viscosity subsolution (resp. supersolution) of \eqref{E:HJ_Equation} if $u(x,0)\leq u_0(x)$ 
(resp. $u(x,0) \geq u_0(x)$) for all $x\in\Gamma(0)$ and, for any $\varphi\in C^1(\overline{S_T})$, if $u-\varphi$ takes a local maximum (resp. minimum) at $(x_0,t_0)\in\overline{S_T}$ with $t_0>0$, then
    \begin{align} \label{E:Ineq_Subsol}
      \partial^\bullet\varphi(x_0,t_0)+H(x_0,t_0,\nabla_\Gamma\varphi(x_0,t_0)) \leq 0 \quad (\mbox{resp. }
      \geq 0).
\end{align}
If $u$ is a sub- and supersolution, then we call $u$ a viscosity solution to \eqref{E:HJ_Equation}.
\end{definition}
\noindent
By the definition above, a viscosity solution is continuous and satisfies $u(x,0)=u_0(x), \, x \in \Gamma(0)$.
In Section~\ref{S:Convergence} we prove that the upper and lower weak limits of a sequence of approximate solutions are a subsolution and supersolution, respectively, and then obtain a viscosity solution by showing that the upper weak limit agrees with the lower weak limit.
For this argument and the uniqueness of a viscosity solution the following comparison principle is crucial.

\begin{theorem} \label{L:Comparison_Principle}
  Let $u$  be a subsolution and $v$ be a supersolution of \eqref{E:HJ_Equation}.
  Suppose that $u(\cdot,0)\leq v(\cdot,0)$  on $\Gamma(0)$.
  Then  $u\leq v$ on $\overline{S_T}$.
\end{theorem}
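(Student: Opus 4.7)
The plan is to adapt the Crandall--Ishii doubling-of-variables argument to the evolving surface $\overline{S_T}$, using that $\overline{S_T}\subset\mathbb{R}^3\times[0,T]$ is compact and that the ambient Euclidean norm gives a natural penalty between points on different slices $\Gamma(t)$ and $\Gamma(s)$. Suppose for contradiction that $M:=\max_{\overline{S_T}}(u-v)>0$ (attained since $u\in USC$, $v\in LSC$ on a compact set). Fix $\sigma\in(0,MT/4)$ and, for $\alpha>0$, consider
\begin{equation*}
  \Phi_\alpha(x,t,y,s) := u(x,t)-v(y,s)-\tfrac{\alpha}{2}|x-y|^2-\tfrac{\alpha}{2}(t-s)^2-\tfrac{\sigma}{T-t}-\tfrac{\sigma}{T-s}
\end{equation*}
on $\overline{S_T}\times\overline{S_T}$, which attains its maximum at some $(\hat x,\hat t,\hat y,\hat s)$.

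Next I would carry out the standard penalization estimates. Comparing $\Phi_\alpha(\hat x,\hat t,\hat y,\hat s)$ with $\Phi_\alpha$ evaluated on the diagonal yields $\alpha|\hat x-\hat y|^2+\alpha(\hat t-\hat s)^2\to0$ and, after extraction, $(\hat x,\hat t),(\hat y,\hat s)\to(x_\ast,t_\ast)$ with $x_\ast\in\Gamma(t_\ast)$. The $\sigma/(T-\cdot)$ terms keep $\hat t,\hat s$ bounded away from $T$. If $t_\ast=0$, then by USC of $u$ and LSC of $v$ together with the hypothesis $u(\cdot,0)\leq v(\cdot,0)$ one obtains $\limsup\Phi_\alpha\leq0$, contradicting $\Phi_\alpha\geq M-2\sigma/T>0$. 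Hence for all large $\alpha$ one has $\hat t,\hat s\in(0,T)$.

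The heart of the proof is the application of the sub/supersolution inequalities. The test function
\begin{equation*}
  \varphi(x,t) := v(\hat y,\hat s)+\tfrac{\alpha}{2}|x-\hat y|^2+\tfrac{\alpha}{2}(t-\hat s)^2+\tfrac{\sigma}{T-t}+\tfrac{\sigma}{T-\hat s}
\end{equation*}
is smooth on $\mathbb{R}^3\times[0,T]$, hence $C^1$ on $\overline{S_T}$, and $u-\varphi$ attains a local maximum at $(\hat x,\hat t)$. Using $\partial^\bullet\varphi=\partial_t\varphi+v_\Gamma\cdot\nabla\varphi$ and $\nabla_\Gamma\varphi=P_\Gamma\nabla\varphi$, \eqref{E:Ineq_Subsol} becomes
\begin{equation*}
  \alpha(\hat t-\hat s)+\tfrac{\sigma}{(T-\hat t)^2}+\alpha v_\Gamma(\hat x,\hat t)\cdot(\hat x-\hat y)+H\bigl(\hat x,\hat t,\alpha P_\Gamma(\hat x,\hat t)(\hat x-\hat y)\bigr)\leq 0.
\end{equation*}
The analogous test function $\psi(y,s)$ at $(\hat y,\hat s)$ for the supersolution yields
\begin{equation*}
  \alpha(\hat t-\hat s)-\tfrac{\sigma}{(T-\hat s)^2}+\alpha v_\Gamma(\hat y,\hat s)\cdot(\hat x-\hat y)+H\bigl(\hat y,\hat s,\alpha P_\Gamma(\hat y,\hat s)(\hat x-\hat y)\bigr)\geq 0.
\end{equation*}
Subtracting gives
\begin{equation*}
  \tfrac{\sigma}{(T-\hat t)^2}+\tfrac{\sigma}{(T-\hat s)^2}\leq \alpha\bigl(v_\Gamma(\hat y,\hat s)-v_\Gamma(\hat x,\hat t)\bigr)\cdot(\hat x-\hat y)+\Delta H,
\end{equation*}
with $\Delta H:=H(\hat y,\hat s,\alpha P_\Gamma(\hat y,\hat s)(\hat x-\hat y))-H(\hat x,\hat t,\alpha P_\Gamma(\hat x,\hat t)(\hat x-\hat y))$.

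The main obstacle is controlling $\Delta H$: assumption \eqref{E:Hamiltonian_Lip_xt} grows linearly in $|p|$, so its contribution is $L_{H,1}(|\hat x-\hat y|+|\hat t-\hat s|)\bigl(1+\alpha|\hat x-\hat y|\bigr)$, and this must be absorbed by the doubling estimates $\alpha|\hat x-\hat y|^2\to0$, $\alpha(\hat t-\hat s)^2\to0$, together with the Cauchy--Schwarz bound $\alpha|\hat x-\hat y|\,|\hat t-\hat s|\leq\tfrac{1}{2}\alpha(|\hat x-\hat y|^2+(\hat t-\hat s)^2)\to0$. The $p$-variation term uses \eqref{E:Hamiltonian_Lip_p} and smoothness of $P_\Gamma$ on $\overline{S_T}$ to give $L_{H,2}\,\alpha|\hat x-\hat y|\,(|\hat x-\hat y|+|\hat t-\hat s|)\to0$, and the velocity term is handled similarly using $v_\Gamma\in C^1(\overline{S_T})$. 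Passing to the limit $\alpha\to\infty$ forces $2\sigma/T^2\leq0$, a contradiction, which completes the proof.
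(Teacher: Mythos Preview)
Your argument is essentially the paper's: the same ambient doubling-of-variables with the quadratic penalty $|x-y|^2+|t-s|^2$, the same computation of $\partial^\bullet\varphi$ and $\nabla_\Gamma\varphi$ via \eqref{E:Material_Euler} and \eqref{E:Tangential_Gradient}, and the same final estimate using the Lipschitz continuity of $v_\Gamma$, $P_\Gamma$ together with \eqref{E:Hamiltonian_Lip_xt}--\eqref{E:Hamiltonian_Lip_p}. The only substantive difference is the choice of time penalty: the paper subtracts $\eta t$ from $u$ and works with $u_\eta=u-\eta t$, whereas you insert $\sigma/(T-t)+\sigma/(T-s)$.

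Two remarks on this variation. First, the paper's $\eta t$ penalty is slightly better adapted to Definition~\ref{D:Def_Viscosity_Solutions}, which allows testing at $t_0=T$; hence there is no need to force the maximum away from $t=T$, and the test functions are manifestly $C^1$ on all of $\overline{S_T}$. With your penalty, $\varphi$ is not $C^1$ at $t=T$ (this is harmless once $\hat t<T$, but needs a sentence), and the argument as written gives comparison on $\{t<T\}$, so one needs an extra word to close at $t=T$. Second, your lower bound ``$\Phi_\alpha\geq M-2\sigma/T$'' has the inequality backwards: evaluating on the diagonal at a maximiser $(x_M,t_M)$ of $u-v$ gives $M-2\sigma/(T-t_M)\leq M-2\sigma/T$. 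The fix is to note that it suffices that $\max\Phi_\alpha>0$, which holds once $\sigma$ is chosen small relative to $(T-t_M)M$ (and one checks that if $u>v$ only at $t=T$, a separate argument, or the paper's penalty, is needed). None of this affects the core of the proof.
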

\begin{proof} We essentially use a standard argument that  is e.g. outlined in \cite[Section 5]{Bar13}.
Let us define for $\eta>0$ the function $u_{\eta}(x,t):=u(x,t)-\eta t$. Clearly, 
$u_{\eta} \in USC(\overline{S_T})$ and $u_{\eta}(\cdot,0) \leq v(\cdot,0)$ on $\Gamma(0)$. Since  $ v \in LSC(\overline{S_T})$ we have $u_{\eta}-v \in USC(\overline{S_T})$
so that $\sigma_{\eta}: = \max_{\overline{S_T}}(u_{\eta}-v)$ exists.
Let us suppose  that $\sigma_{\eta}>0$. We use the 
  doubling of variables technique and  define for 
  $0 < \alpha \ll 1$ 
  \begin{align*}
    \Psi_{\alpha}(x,t,y,s) := u_{\eta}(x,t)-v(y,s)-\frac{|x-y|^2+| t-s|^2}{\alpha^2}, \quad
    (x,t,y,s)\in\overline{S_T}\times\overline{S_T}.
  \end{align*}
 $\Psi_{\alpha}$ is upper semicontinuous on $\overline{S_T}\times\overline{S_T}$ and
 hence attains a maximum at some point $(\bar x, \bar t,\bar y,\bar s)\in\overline{S_T}\times\overline{S_T}$,
 where we suppress the dependence on $\alpha$. It is shown in \cite[Lemma 5.2]{Bar13}
 that 
 \begin{eqnarray}
 \frac{| \bar x - \bar y |^2}{\alpha^2}, \; \frac{ | \bar t - \bar s |^2}{\alpha^2}  \rightarrow  0, &&
 \mbox{ as } \alpha \rightarrow 0, \label{conv1} \\
 \bar t, \bar s >0, && \mbox{ for small  }  \alpha>0.  \label{conv2}
 \end{eqnarray}

\noindent
  We define for $(x,t)$, $(y,s)\in\mathbb{R}^4$ the functions
  \begin{align*}
    \varphi^1(x,t) := v(\bar y, \bar s)+\frac{|x- \bar y |^2+| t- \bar s|^2}{\alpha^2}, \;
    \varphi^2(y,s) := u_{\eta}(\bar x, \bar t) -\frac{| \bar x-y |^2+ | \bar t - s|^2}{\alpha^2}.
  \end{align*}
 Clearly, the restriction of $\varphi^i, \, i=1,2$  to $\overline{S_T}$ belongs to  $C^1(\overline{S_T})$.
 Since $u$ is a subsolution to \eqref{E:HJ_Equation} and $u-(\varphi^1+\eta t)=
 (u_{\eta}-\varphi^1)(x,t)= \Psi_{\alpha}(x,t,\bar y, \bar s)$ takes a maximum at $(x,t)=(\bar x,\bar t)\in\overline{S_T}$ with $\bar t>0$, we have
  \begin{align*}
    \partial^\bullet\varphi^1(\bar x, \bar t)+H(\bar x, \bar t,\nabla_\Gamma\varphi^1(\bar x, \bar t)) \leq -\eta.
  \end{align*}
Observing that by \eqref{E:Tangential_Gradient} and \eqref{E:Material_Euler}
  \begin{displaymath}
    \nabla_\Gamma\varphi^1(x,t) = \frac{2}{\alpha^2} P_\Gamma(x,t)(x- \bar y), \quad
    \partial^\bullet\varphi^1(x,t) = \frac{2}{\alpha^2}(t- \bar s)+\frac{2}{\alpha^2}v_\Gamma(x,t)\cdot(x-\bar y)
  \end{displaymath}
we deduce    
  \begin{equation} \label{Pf_CP:Subsol_Test}
  \frac{2(\bar t -\bar s)}{\alpha^2}+\frac{2}{\alpha^2}v_\Gamma(\bar x,\bar t)\cdot(\bar x-\bar y) 
    +H\bigl(\bar x,\bar t,\frac{2}{\alpha^2} P_\Gamma(\bar x,\bar t)(\bar x- \bar y)\bigr) \leq -\eta.
  \end{equation}
  Since $v$ is a supersolution and $(v-\varphi^2)(y,s)=-\Psi_{\alpha}(\bar x,\bar t,y,s)$ takes a minimum at $(y,s)=(\bar y,\bar s)\in\overline{S_T}$ with $\bar s>0$, it follows that
  \begin{align*}
    \partial^\bullet\varphi^2(\bar y,\bar s)+H(\bar y,\bar s,\nabla_\Gamma\varphi^2(\bar y,\bar s)) \geq 0
  \end{align*}
  and we obtain similarly as above
  \begin{equation} \label{Pf_CP:Supersol_Test}
   -\frac{2(\bar t- \bar s)}{\alpha^2} -\frac{2}{\alpha^2} v_\Gamma(\bar y, \bar s)\cdot(\bar x- \bar y) 
    -H\bigl(\bar y,\bar s,\frac{2}{\alpha^2} P_\Gamma(\bar y, \bar s)(\bar x- \bar y)\bigr) \leq 0.
  \end{equation}
  We deduce from  \eqref{Pf_CP:Subsol_Test} and \eqref{Pf_CP:Supersol_Test} that 
  \begin{eqnarray}
      \bar A &:= & \frac{2}{\alpha^2}\{v_\Gamma(\bar x, \bar t)-v_\Gamma(\bar y,\bar s)\}\cdot(\bar x-\bar y)
   \nonumber  \\
     & & +  H\left(\bar x,\bar t,\frac{2}{\alpha^2} P_\Gamma(\bar x, \bar t)(\bar x-\bar y)\right)
      -H\left(\bar y,\bar s,\frac{2}{\alpha^2} P_\Gamma(\bar y,\bar s)(\bar x-\bar y)\right) \leq - \eta. 
       \label{Pf_CP:Difference_Test} 
  \end{eqnarray}
 Since $v_{\Gamma}, P_{\Gamma}$ are  smooth on $\overline{S_T}$ we obtain with the help of 
  \eqref{E:Hamiltonian_Lip_xt}, \eqref{E:Hamiltonian_Lip_p} and (\ref{conv1})
\begin{eqnarray*}
    | \bar A| &  \leq & \frac{2}{\alpha^2} \bigl( | v_{\Gamma}(\bar x,\bar t)-v_{\Gamma}(\bar y,\bar s)|
  +L_{H,2} |P_\Gamma(\bar x ,\bar t)-P_\Gamma(\bar y,\bar s)| \bigr)  | \bar x -\bar y| \\  
    & &  \quad +   L_{H,1}(| \bar x - \bar y|+| \bar t - \bar s |)\left(1+\frac{2}{\alpha^2}|P_\Gamma(\bar x,\bar t)(\bar x -\bar y)|\right) \\
    & \leq &   C \frac{| \bar x - \bar y |^2+| \bar t - \bar s |^2}{\alpha^2} + \alpha^2 \rightarrow 0, \alpha \rightarrow 0
  \end{eqnarray*}
contradicting (\ref{Pf_CP:Difference_Test}). Hence, $\sigma_{\eta} \leq 0$, so that $u_{\eta} \leq v$
on $\overline{S_T}$. The result now follows upon sending $\eta \rightarrow 0$.
\end{proof}

\begin{corollary}[Uniqueness of a viscosity solution] \label{C:Uniqueness_Solution}
  For any initial value $u_0\in C(\Gamma(0))$ there exists at most one viscosity solution to \eqref{E:HJ_Equation}.
\end{corollary}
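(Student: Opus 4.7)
The proof plan is to apply the comparison principle of Theorem \ref{L:Comparison_Principle} symmetrically to two putative viscosity solutions. The reason this works cleanly is that the definition of a viscosity solution bundles together the subsolution property (which forces upper semicontinuity and $u(\cdot,0) \leq u_0$) and the supersolution property (which forces lower semicontinuity and $u(\cdot,0) \geq u_0$), so a viscosity solution is automatically continuous on $\overline{S_T}$ and attains $u_0$ at $t=0$.

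Concretely, suppose $u$ and $v$ are both viscosity solutions of \eqref{E:HJ_Equation} with the same initial value $u_0 \in C(\Gamma(0))$. By Definition \ref{D:Def_Viscosity_Solutions}, each of $u$ and $v$ lies in $USC(\overline{S_T}) \cap LSC(\overline{S_T}) = C(\overline{S_T})$, and each satisfies $u(\cdot,0) = v(\cdot,0) = u_0$ on $\Gamma(0)$. In particular, we may view $u$ as a subsolution and $v$ as a supersolution, with $u(\cdot,0) \leq v(\cdot,0)$ on $\Gamma(0)$, so Theorem \ref{L:Comparison_Principle} yields $u \leq v$ on $\overline{S_T}$. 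Reversing the roles (now $v$ as subsolution and $u$ as supersolution), the same theorem yields $v \leq u$ on $\overline{S_T}$. Combining the two inequalities gives $u = v$ on $\overline{S_T}$, which is the desired uniqueness.

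The main obstacle has already been overcome in the proof of Theorem \ref{L:Comparison_Principle} via the doubling of variables argument; this corollary is a routine consequence and requires no further analysis beyond carefully reading off the semicontinuity and initial-trace information from the definition of a viscosity solution.
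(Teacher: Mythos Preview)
Your proof is correct and is exactly the approach the paper takes: the corollary is stated as an immediate consequence of Theorem~\ref{L:Comparison_Principle}, using that a viscosity solution is by definition both a sub- and supersolution, hence continuous with $u(\cdot,0)=u_0$, and then applying comparison in both directions.
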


\section{Finite volume scheme} \label{S:Finite_Volume}
\noindent
Let us next turn to the approximation of  \eqref{E:HJ_Equation}. As mentioned already in the introduction,
our scheme is based on the finite volume scheme for Hamilton--Jacobi equations in a flat and stationary domain 
introduced by Kim and Li in \cite{KL15JCM}. \\
Let $t^0<t^1<\ldots <t^{N-1}<t^N=T$ be a partitioning of $[0,T]$ with time steps $\tau^n=t^{n+1}-t^n$
and $\tau:= \max_{n=0,...,N-1} \tau^n$ as well as
$x_i^n=x_i(t^n), \, V_h^n=V_h(t^n)$.  In order
to derive our scheme we start from the following viscous approximation of \eqref{E:HJ_Equation}
\begin{equation}  \label{viscapprox}
  \partial^\bullet u(x,t)+H(x,t,\nabla_\Gamma u(x,t)) = \varepsilon\Delta_\Gamma u(x,t),  \quad (x,t)\in S_T,
\end{equation}
where $0< \varepsilon \ll 1$. Let us fix $i \in \lbrace 1,\ldots,M \rbrace$ and consider a time--dependent
set $V_i(t) \subset \Gamma(t)$ centered at $x_i(t)$. Integrating (\ref{viscapprox}) for $t=t^n$ over
$V_i(t^n)$ we find that
\begin{equation}  \label{deriv1}
\int_{V_i(t^n)}  \partial^\bullet u \, d \mathcal H^2 + \int_{V_i(t^n)} H(\cdot,t^n,\nabla_{\Gamma}u)
\, d \mathcal H^2 = \varepsilon \int_{V_i(t^n)} \Delta_{\Gamma} u \,  d \mathcal H^2.
\end{equation}
Here, $\mathcal H^n$ is the $n$--dimensional Hausdorff measure. 
Let us consider the first term on the left hand side of (\ref{deriv1}). Using the transport theorem (see e.g.
\cite[Theorem 5.1]{DE13Acta}) and
approximating $\int_{V_i(t)} u \, d \mathcal H^2$ by $u(x_i(t),t) | V_i(t) |$  ($| V_i(t)| = \mathcal H^2(V_i(t))$)
we obtain
\begin{eqnarray*}
 \int_{V_i(t^n)}  \partial^\bullet u \, d \mathcal H^2  &= & \frac{d}{dt} \int_{V_i(t)}
u \, d \mathcal H^2_{|t=t^n} - \int_{V_i(t^n)} \nabla_{\Gamma} \cdot v_{\Gamma} \, u \, d \mathcal H^2  \\
& \approx & \frac{  u(x_i^{n+1},t^{n+1}) | V_i(t^{n+1}) | - u(x_i^n,t^n) | V_i(t^n) | }{\tau^n}
-  \int_{V_i(t^n)} \nabla_{\Gamma} \cdot v_{\Gamma} \, u  \, d \mathcal H^2.
\end{eqnarray*}
Since $\displaystyle \frac{d}{dt} | V_i(t) | = \int_{V_i(t)} \nabla_{\Gamma} \cdot v_{\Gamma} \, d \mathcal H^2$ we
may approximate $| V_i(t^{n+1}) | \approx | V_i(t^n) | + \tau^n \int_{V_i(t^n)} \nabla_{\Gamma} \cdot
v_{\Gamma} \, d \mathcal H^2$ so that
\begin{displaymath}
\int_{V_i(t^n)}  \partial^\bullet u  \, d \mathcal H^2 \approx 
 \frac{  u(x_i^{n+1},t^{n+1})  - u(x_i^n,t^n) }{\tau^n} \,  | V_i(t^n) |.
\end{displaymath}
Finally, after applying Gauss theorem for hypersurfaces  to the integral on the right hand side of (\ref{deriv1}) we obtain
\begin{equation}  \label{deriv2}
\frac{  u(x_i^{n+1},t^{n+1})  - u(x_i^n,t^n) }{\tau^n} \,  | V_i(t^n) | +  \int_{V_i(t^n)} H(\cdot,t^n,\nabla_{\Gamma}u)
\, d \mathcal H^2  \approx \varepsilon \int_{\partial V_i(t^n)} \frac{\partial u}{\partial \mu} \, d \mathcal H^1,
\end{equation}
where $\mu$ denotes the outer unit conormal to $\partial V_i(t^n)$. In order to turn (\ref{deriv2}) into
a numerical scheme we construct a suitable discrete version $V^{n,i} \subset \Gamma_h(t^n)$ of
$V_i(t^n)$
and take for $\varepsilon$  a vertex dependent parameter $\varepsilon^n_i$.
Let $\Upsilon_i\in\mathbb{N}$ be the number of triangles that have the common vertex $i$, which is independent of $n$.
The other vertices of the triangles with common vertex $i$ are denoted by $i_j$, $j=1,\dots,\Upsilon_i$, which we enumerate 
in clockwise direction. We write $T_j^{n,i}  \in \mathcal T_h(t^n)$ for the triangle with vertices $i$, $i_j$, and $i_{j+1}$ 
and  $E_j^{n,i}$
for the edge of $T_j^{n,i}$ connecting the vertices $i$ and $i_j$
(see Figure~\ref{F:Vertex_i}, left).
\begin{figure}[htb]
  \centering
   \includegraphics[scale=0.4,clip]{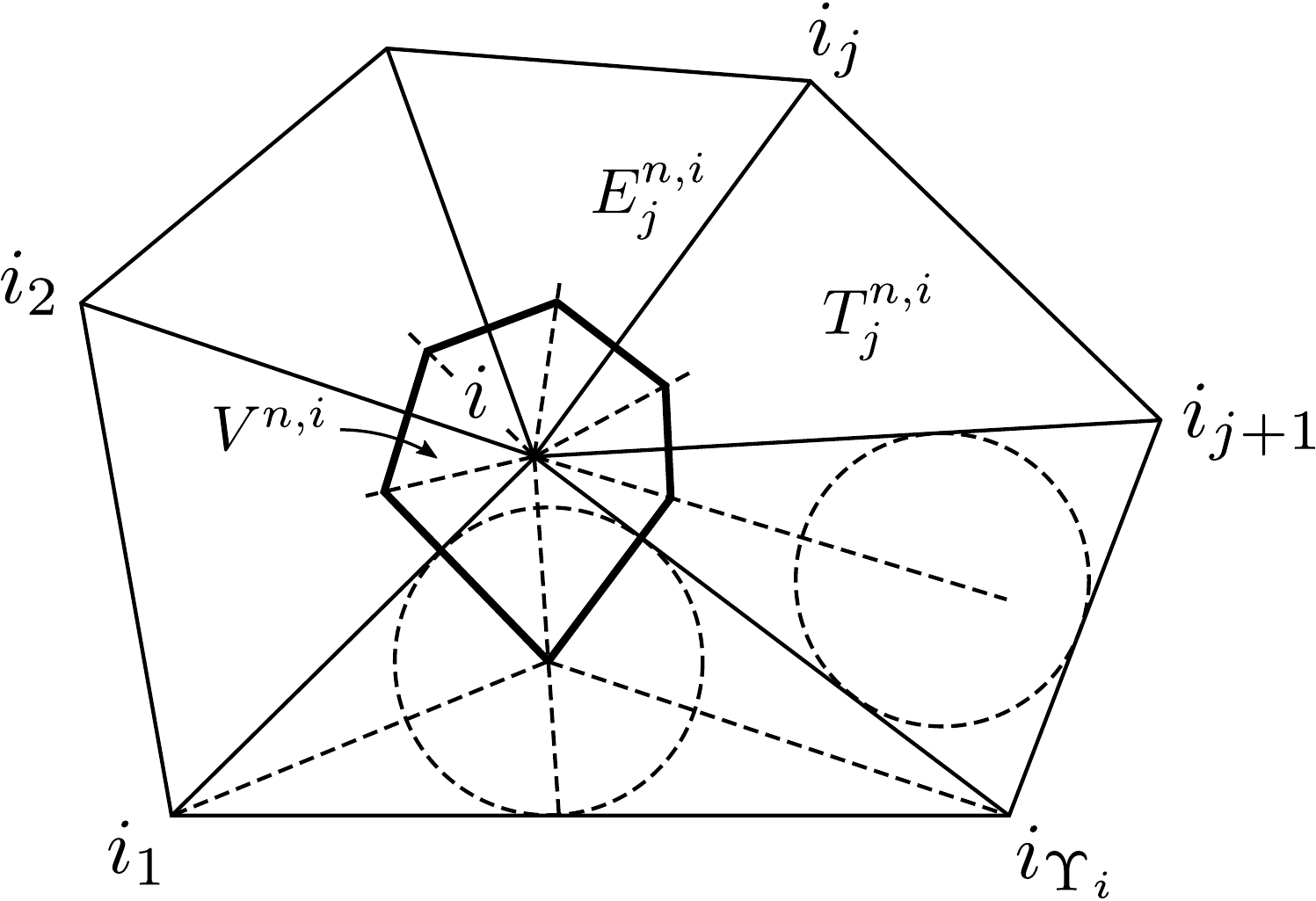}
 \hspace{2mm} \hfill
  \includegraphics[scale=0.4,clip]{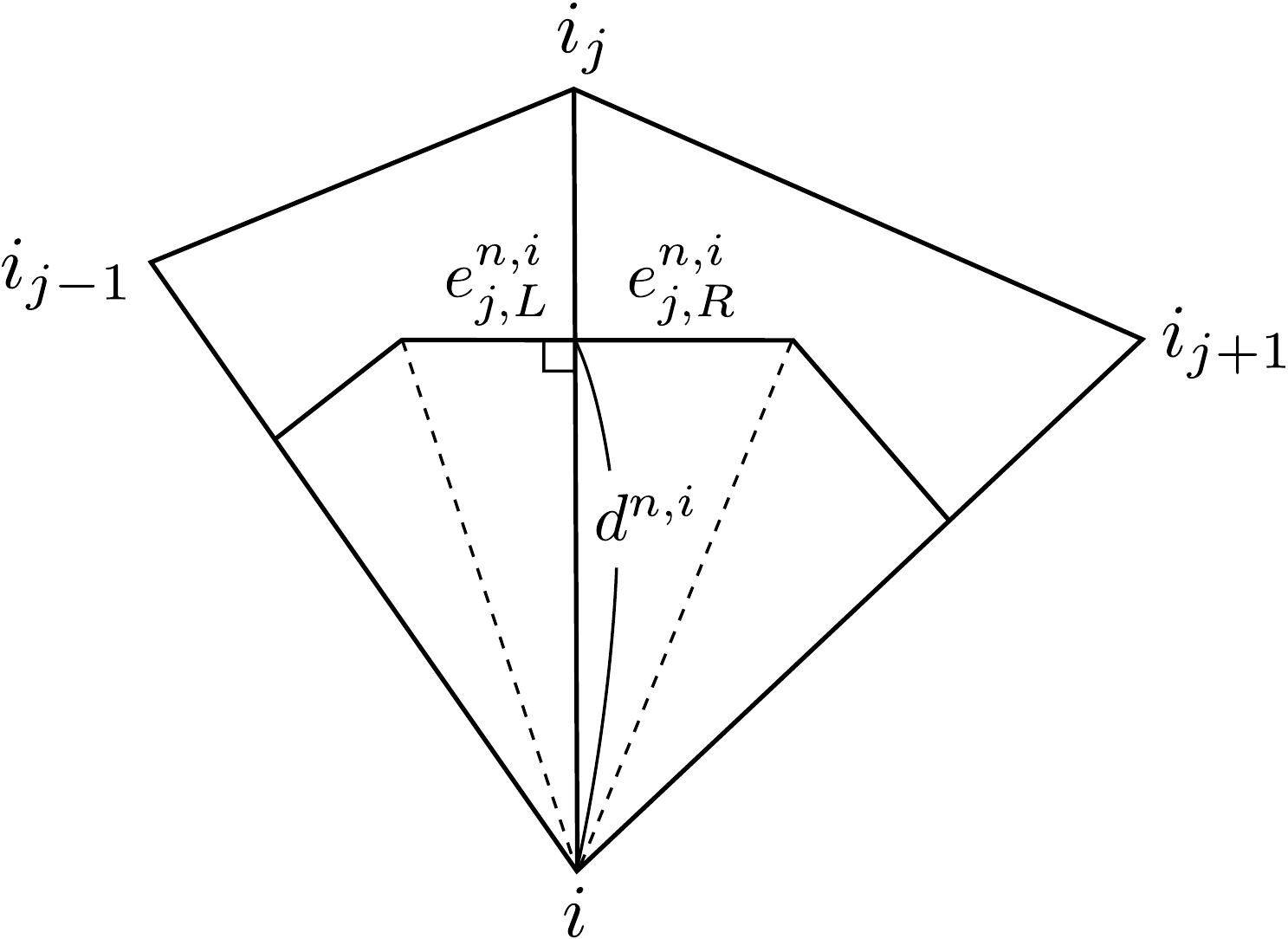}
  \caption{}
    \label{F:Vertex_i}
\end{figure}

\noindent
Let $d_j^{n,i}$ be the length from the vertex $i$ to the contact point on $E_j^{n,i}$ of the inscribed circle of $T_j^{n,i}$ and $d^{n,i}:=\min\{d_j^{n,i} \mid j=1,\dots,\Upsilon_i\}$.
We define the volume $V^{n,i} \subset \Gamma_h(t^n)$ as a polygonal region surrounded by line segments perpendicular to each edge $E_j^{n,i}$ and 
whose distances from the vertex $i$ are all equal to $d^{n,i}$.
The parts of the edge of $V^{n,i}$ perpendicular to $E_j^{n,i}$ and lying in $T_{j-1}^{n,i}$ and $T_j^{n,i}$ are denoted by 
$e_{j,L}^{n,i}$ and $e_{j,R}^{n,i}$ with their length $h_{j,L}^{n,i}$ and $h_{j,R}^{n,i}$, respectively (see Figure~\ref{F:Vertex_i}, right). 
The diameter of $T^{n,i}_j$ is denoted by $h_{T^{n,i}_j}$. Note that in view of (\ref{E:Quotient_Diam_Local}) there exist constants
$0< \alpha_1 < \alpha_2$ and $C>0$  such that
\begin{equation}  \label{E:Quotient_EV}
    \alpha_1 \leq \frac{h_{j,L}^{n,i}+h_{j,R}^{n,i}}{|E_j^{n,i}|} \leq \alpha_2, \quad h_{T^{n,i}_j} \leq C d^{n,i} 
\end{equation}
for all $n=0,1,\dots,N$, $i=1,\dots,M$, and $j=1,\dots,\Upsilon_i$.

\noindent
If we look for a discrete solution of the form
 $u^n_h = \sum_{i=1}^M u^n_i \chi_i(\cdot,t^n) \in V^n_h$, then (\ref{deriv2}) motivates the following
 relation:
\begin{displaymath}
  \frac{u_i^{n+1}-u_i^n}{\tau^n}|V^{n,i}|+\sum_{j=1}^{\Upsilon_i}|V^{n,i}\cap T_j^{n,i}| \, H\left(x_i^n,t^n,\nabla_{\Gamma_h}u_h^n|_{T_j^{n,i}}\right) 
  = \varepsilon^n_i\sum_{j=1}^{\Upsilon_i}\frac{u_{i_j}^n-u_i^n}{|E_j^{n,i}|}(h_{j,L}^{n,i}+h_{j,R}^{n,i}),
\end{displaymath}
where we allow the coefficient $\varepsilon^n_i>0$ to depend on the time step and the vertex. Let
$\nu^{n,i}_j= \nu_{h| T^{n,i}_j}$ and
 $\nabla_{\Gamma_h} u^n=(I_3-  \nu_j^{n,i} \otimes  \nu_j^{n,i}) \nabla u^n_h$.
Note that $\nu^{n,i}_j$ and hence $ \nabla_{\Gamma_h} u^n$
is constant on $T_j^{n,i}$. To summarize, our numerical scheme for the Hamilton--Jacobi equation \eqref{E:HJ_Equation} looks as follows.
For a given $u_0\colon\Gamma(0)\to\mathbb{R}$, set
\begin{align} \label{E:Scheme_Initial}
  u_h^0 := I_h^0u_0 = \sum_{i=1}^Mu_i^0\chi_i(\cdot,0) \in V_h^0, \quad u_i^0 := u_0(x_i^0).
\end{align}
For $n=0,1,\dots,N-1$, if $u_h^n=\sum_{i=1}^Mu_i^n\chi_i(\cdot,t^n)\in V_h^n$ is given, then we define
\begin{align} \label{E:Scheme_Next}
  u_h^{n+1} = S_h^n(u_h^n) := \sum_{i=1}^Mu_i^{n+1}\chi_i(\cdot,t^{n+1}) \in V_h^{n+1}
\end{align}
where
\begin{align} \label{E:Scheme_Iteration}
  u_i^{n+1} = [S_h^n(u_h^n)]_i := u_i^n-\tau^n H_i^n(u_i^n,u_{i_1}^n,\dots,u_{i_{\Upsilon_i}}^n), \quad 
  i=1,\dots,M.
\end{align}
Here $H_i^n(u_i^n,u_{i_1}^n,\dots,u_{i_{\Upsilon_i}}^n)$ is the numerical Hamiltonian given by
\begin{equation} \label{E:Numerical_Hamiltonian}
  H_i^n(u_i^n,u_{i_1}^n,\dots,u_{i_{\Upsilon_i}}^n) := \sum_{j=1}^{\Upsilon_i}\frac{|V^{n,i}\cap T_j^{n,i}|}{|V^{n,i}|}H\left(x_i^n,t^n,\nabla_{\Gamma_h}u_h^n|_{T_j^{n,i}}\right)
  -\frac{\varepsilon^n_i}{|V^{n,i}|}\sum_{j=1}^{\Upsilon_i}\frac{u_{i_j}^n-u_i^n}{|E_j^{n,i}|}(h_{j,L}^{n,i}+h_{j,R}^{n,i}).
\end{equation}

\noindent
Let us  derive several properties of the finite volume scheme \eqref{E:Scheme_Initial}--\eqref{E:Numerical_Hamiltonian}. It is easy to see that the
scheme is invariant under translation with constants, i.e.
\begin{align} \label{E:Invariance_Constant}
  S_h^n(u_h^n+c)=S_h^n(u_h^n)+c
\end{align}
for any $u_h^n\in V_h^n$ and $c\in\mathbb{R}$.  We proceed by proving that the scheme is monotone.

\begin{lemma}[Monotonicity] \label{L:Monotonicity}
There exist positive constants $C_0,C_1$ and $C_2$ depending only on $\gamma$ and $L_{H,2}$ such that, if
  \begin{align} \label{E:Epsilon_Tau}
  C_0 \max_j h_{T^{n,i}_j} \leq   \varepsilon^n_i  \leq  C_1 \max_j h_{T^{n,i}_j}, \quad \tau^n \leq C_2 \min_{i,j} | E^{n,i}_j |
  \end{align}
  and $u_h^n$, $v_h^n\in V_h^n$ satisfy $u_h^n\leq v_h^n$ on $\Gamma_h(t^n)$, then $S_h^n(u_h^n)\leq S_h^n(v_h^n)$ on $\Gamma_h(t^{n+1})$.
\end{lemma}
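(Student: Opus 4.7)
The plan is to establish the stronger pointwise assertion that, for each vertex $i$, the value $[S_h^n(u_h^n)]_i$ depends only on the nodal values $u_i^n, u_{i_1}^n, \ldots, u_{i_{\Upsilon_i}}^n$ and is non-decreasing in each of them separately. Once this is proved, the lemma follows by a telescoping argument: given $u_h^n \leq v_h^n$, raise the coordinates from $u_h^n$ to $v_h^n$ one at a time and apply coordinate monotonicity at every step to conclude $[S_h^n(u_h^n)]_i \leq [S_h^n(v_h^n)]_i$ for every $i$.

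To verify coordinate monotonicity I would fix $k \in \{i, i_1, \ldots, i_{\Upsilon_i}\}$ and $t\geq 0$ and study the effect on $[S_h^n(u_h^n)]_i$ of raising the single nodal value $u_k^n$ by $t$. On each triangle $T_j^{n,i}$ the discrete tangential gradient shifts by $t\,\nabla_{\Gamma_h}\chi_k|_{T_j^{n,i}}$, nonzero only on those $T_j^{n,i}$ having $k$ as a vertex and satisfying $|\nabla_{\Gamma_h}\chi_k|_{T_j^{n,i}}| \leq C\gamma/h_{T_j^{n,i}}$ by the shape-regularity (\ref{E:Quotient_Diam_Local}). Combined with the Lipschitz bound (\ref{E:Hamiltonian_Lip_p}), each evaluation $H(x_i^n,t^n,\nabla_{\Gamma_h} u_h^n|_{T_j^{n,i}})$ changes by at most $L_{H,2}Ct/h_{T_j^{n,i}}$ in absolute value, while the change of the viscous part is computed exactly. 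When $k=i_l$ is a neighbour, only $T_{l-1}^{n,i}$ and $T_l^{n,i}$ contribute to the Hamiltonian change and the viscous change has the favourable sign $-\tau^n\varepsilon_i^n t(h_{l,L}^{n,i}+h_{l,R}^{n,i})/(|V^{n,i}||E_l^{n,i}|)$; using the lower bound $(h_{l,L}^{n,i}+h_{l,R}^{n,i})/|E_l^{n,i}| \geq \alpha_1$ from (\ref{E:Quotient_EV}) together with $|V^{n,i}\cap T_j^{n,i}| \leq C h_{T_j^{n,i}}^2$, the viscous term dominates the Hamiltonian perturbation once $\varepsilon_i^n \geq C_0 \max_j h_{T_j^{n,i}}$. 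When $k=i$, the identity $\nabla_{\Gamma_h}\chi_i+\nabla_{\Gamma_h}\chi_{i_j}+\nabla_{\Gamma_h}\chi_{i_{j+1}}=0$ on $T_j^{n,i}$ makes every surrounding triangle contribute, and the explicit $+t$ coming from the $u_i^n$ term in (\ref{E:Scheme_Iteration}) must dominate $\tau^n t$ times a quantity of order $L_{H,2}/h + \varepsilon_i^n/h^2$; combined with the upper bound $\varepsilon_i^n \leq C_1 \max_j h_{T_j^{n,i}}$, this reduces to the CFL condition $\tau^n \leq C_2 \min_{i,j} |E_j^{n,i}|$.

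The main obstacle is the precise balancing of $\varepsilon_i^n$: it must be large enough that the discrete-Laplacian term absorbs the potentially wrong-signed Hamiltonian perturbation when a neighbour value moves (forcing the lower bound $\varepsilon_i^n \gtrsim \max_j h_{T_j^{n,i}}$), yet small enough that the parabolic-type contribution $\tau^n\varepsilon_i^n/|V^{n,i}|$ stays $O(1)$ under the hyperbolic CFL $\tau^n\sim h$ (forcing the upper bound $\varepsilon_i^n \lesssim \max_j h_{T_j^{n,i}}$). Because $H$ is only Lipschitz, the partial-derivative calculation cannot be made pointwise; one has to work with one-sided differences and the Lipschitz constant $L_{H,2}$ throughout. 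Non-acuteness of the triangulation is not an obstruction: the Kim--Li construction of $V^{n,i}$ together with the geometric inequalities (\ref{E:Quotient_Diam_Local}) and (\ref{E:Quotient_EV}) supplies all required uniform estimates without any acute-angle hypothesis, which is exactly the reason this scheme was chosen for the evolving-surface setting.
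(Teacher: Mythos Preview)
Your proposal is correct and essentially follows the paper's proof. The paper does not telescope: it writes the full difference $[S_h^n(v_h^n)]_i-[S_h^n(u_h^n)]_i=(v_i^n-u_i^n)+\tau^n(I_1+I_2+I_3)$, with $I_1$ the Hamiltonian difference, $I_2$ the neighbour part of the viscous term and $I_3$ the centre part, and bounds these simultaneously using exactly the ingredients you list (the Lipschitz bound \eqref{E:Hamiltonian_Lip_p} together with an inverse estimate, the lower bound $\alpha_1$ in \eqref{E:Quotient_EV}, and $|V^{n,i}\cap T_j^{n,i}|\leq |E_j^{n,i}|\max_j h_{T_j^{n,i}}$). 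Your coordinate-wise argument is just the same computation reorganised, and yields the same constraints on $\varepsilon_i^n$ and $\tau^n$; note only that the viscous change you display should carry a $+$ sign (it is the favourable term, as you say).
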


\begin{proof}
  Let $u_h^n$, $v_h^n\in V_h^n$ be of the form
  \begin{align*}
    u_h^n = \sum_{i=1}^Mu_i^n\chi_i(\cdot,t^n), \quad v_h^n = \sum_{i=1}^Mv_i^n\chi_i(\cdot,t^n) \quad\text{on}\quad \Gamma_h(t^n).
  \end{align*}
Note  that $u_h^n\leq v_h^n$ on $\Gamma_h(t^n)$
  is equivalent to $u_i^n\leq v_i^n$ for all $i=1,\dots,M$ since the nodal basis functions $\chi_i$ are piecewise linear affine and satisfy $\chi_i(x_j(t),t)=\delta_{ij}$.
  By the same reason it is sufficient to establish that
  \begin{align} \label{Pf_M:Ineq_Component}
    [S_h^n(u_h^n)]_i \leq [S_h^n(v_h^n)]_i \quad\text{for all}\quad i=1,\dots,M
  \end{align}
 in order to prove our claim.
  For $i=1,\dots,M$, by \eqref{E:Scheme_Iteration} and \eqref{E:Numerical_Hamiltonian} we have
  \begin{align} \label{Pf_M:Diff_Component}
    [S_h^n(v_h^n)]_i-[S_h^n(u_h^n)]_i = v_i^n-u_i^n+\tau^n(I_1+I_2+I_3),
  \end{align}
  where $I_1+I_2+I_3=-H_i^n(v_i^n,v_{i_1}^n,\dots,v_{i_{\Upsilon_i}}^n)+H_i^n(u_i^n,u_{i_1}^n,\dots,u_{i_{\Upsilon_i}}^n)$ with
  \begin{align*}
    I_1 &:= -\sum_{j=1}^{\Upsilon_i}\frac{|V^{n,i}\cap T_j^{n,i}|}{|V^{n,i}|}\left\{H\left(\nabla_{\Gamma_h}v_h^n|_{T_j^{n,i}}\right)-H\left(\nabla_{\Gamma_h}u_h^n|_{T_j^{n,i}}\right)\right\}, \\
    I_2 &:= \frac{\varepsilon^n_i}{|V^{n,i}|}\sum_{j=1}^{\Upsilon_i}\frac{v_{i_j}^n-u_{i_j}^n}{|E_j^{n,i}|}(h_{j,L}^{n,i}+h_{j,R}^{n,i}), \\
    I_3 &:= -\frac{\varepsilon^n_i(v_i^n-u_i^n)}{|V^{n,i}|}\sum_{j=1}^{\Upsilon_i}\frac{(h_{j,L}^{n,i}+h_{j,R}^{n,i})}{|E_j^{n,i}|}.
  \end{align*}
  In the definition of $I_1$ we suppressed $x_i^n$ and $t^n$ of $H$.
  Let us estimate $I_1$, $I_2$, and $I_3$.
  By \eqref{E:Hamiltonian_Lip_p} and an inverse inequality
\begin{eqnarray*}
  \lefteqn{ \hspace{-3.4cm}
    \left|H\left(\nabla_{\Gamma_h}v_h^n|_{T_j^{n,i}}\right)-H\left(\nabla_{\Gamma_h}u_h^n|_{T_j^{n,i}}\right)\right| \leq L_{H,2}\left|\nabla_{\Gamma_h}v_h^n|_{T_j^{n,i}}-\nabla_{\Gamma_h}u_h^n|_{T_j^{n,i}}\right| 
     \leq C | E^{n,i}_j|^{-1} \, \Vert   v^n_h - u^n_h    \Vert_{B(T^{n,i}_j)} } \\
   &\leq &  C | E^{n,i}_j|^{-1}  \{ (v_i^n-u_i^n)+(v_{i_j}^n-u_{i_j}^n)+(v_{i_{j+1}}^n-u_{j_{i+1}}^n)\},  
\end{eqnarray*}
since $u^n_h,v^n_h$ are linear on $T^{n,i}_j$ and $v^n_h -u^n_h \geq 0$.

\noindent
Using that $\sum_{j=1}^{\Upsilon_i}\frac{|V^{n,i}\cap T_j^{n,i}|}{|V^{n,i}|} = 1$ as well as
  \begin{align} \label{Pf_M:Area_VcapT}
 |V^{n,i}\cap T_j^{n,i} | =\frac{1}{2} d^{n,i} ( h^{n,i}_{j,R} + h^{n,i}_{j+1,L}) \leq 
     | E^{n,i}_j | \, \max_j h_{T^{n,i}_j}, \quad j=1,\ldots,\Upsilon_i,
  \end{align}
  we get
  \begin{equation} \label{Pf_M:Estimate_I1}
      |I_1| \leq \frac{C}{\min_{j} | E^{n,i}_j |} (v_i^n-u_i^n)+ \frac{C}{| V^{n,i} |} \, \max_j h_{T^{n,i}_j}  
   \sum_{j=1}^{\Upsilon_i}(v_{i_j}^n-u_{i_j}^n).
  \end{equation}
Next, from (\ref{E:Quotient_EV}) and the fact that   $u_{i_j}^n\leq v_{i_j}^n$ for $j=1,\dots,\Upsilon_i$  we infer that
  \begin{equation} \label{Pf_M:Estimate_I2}
    I_2 \geq \frac{\alpha_1 \varepsilon^n_i}{| V^{n,i} |} \, \sum_{j=1}^{\Upsilon_i}(v_{i_j}^n-u_{i_j}^n).
  \end{equation}
In view of the relation   $| V^{n,i} | = \sum_{j=1}^{\Upsilon_i} \frac{1}{2} d^{n,i} ( h^{n,i}_{j,L} + h^{n,i}_{j,R})$
and (\ref{E:Quotient_Diam_Local})  we obtain
\begin{eqnarray*}
\frac{1}{| V^{n,i} |} \sum_{j=1}^{\Upsilon_i} \frac{h_{j,L}^{n,i}+h_{j,R}^{n,i}}{|E_j^{n,i}|} & \leq &
\frac{1}{| V^{n,i} |}  \frac{1}{\min_j | E^{n,i}_j |} \sum_{j=1}^{\Upsilon_i} (h_{j,L}^{n,i}+h_{j,R}^{n,i}) 
 =  \frac{2}{d^{n,i}}   \frac{1}{\min_j | E^{n,i}_j |} \\
 &  \leq &   \frac{C}{\max_j h_{T^{n,i}_j} \min_j | E^{n,i}_j |}
\leq \frac{C C_1}{\varepsilon^n_i} \frac{1}{\min_j | E^{n,i}_j |},
\end{eqnarray*}
where we used  (\ref{E:Epsilon_Tau}) in the last step. Hence
  \begin{align} \label{Pf_M:Estimate_I3}
    I_3 \geq -  \frac{C C_1}{\min_j | E^{n,i}_j |}(v_i^n-u_i^n).
  \end{align}
   From \eqref{Pf_M:Diff_Component}, \eqref{Pf_M:Estimate_I1}, \eqref{Pf_M:Estimate_I2}, and \eqref{Pf_M:Estimate_I3} it follows that
  \begin{displaymath}  
    [S_h^n(v_h^n)]_i-[S_h^n(u_h^n)]_i 
     \geq   \bigl( 1-\frac{\tau^n C(1+C_1)}{ \min_j | E^{n,i}_j | } \bigr)(v_i^n-u_i^n) 
    + \frac{1}{ | V^{n,i} |} (\alpha_1 \varepsilon^n_i - C \max_j h_{T^{n,i}_j}) \, \sum_{j=1}^{\Upsilon_i}(v_{i_j}^n-u_{i_j}^n) 
  \end{displaymath}
  which yields (\ref{Pf_M:Ineq_Component})
  if we choose $C_0= \frac{C}{\alpha_1}$ and $C_2= \frac{1}{C(1+C_1)}$ in (\ref{E:Epsilon_Tau}).
\end{proof}

\noindent
In what follows we write $I^n_h \varphi$ instead of $I^{t^n}_h \varphi$, i.e.
\begin{align*}
    I_h^n\varphi = \sum_{i=1}^M\varphi_i^n\chi_i(\cdot,t^n) \in V_h^n, \quad \varphi_i^n = \varphi(x_i^n,t^n).
  \end{align*}

\begin{lemma}[Consistency] \label{L:Consistency}
 Suppose that  \eqref{E:Epsilon_Tau} is satisfied.
  Then there exists a constant $C_3>0$ depending only on $\gamma$, $L_{H,2}$ such that
  \begin{multline} \label{E:Consistency}
    \left|\frac{\varphi_i^{n+1}-[S_h^n(I_h^n\varphi)]_i}{\tau^n}-\{  \partial^\bullet \varphi(x_i^n,t^n)+H(x_i^n,t^n,\nabla_\Gamma\varphi(x_i^n,t^n))\}\right| \\
    \leq C_3h\left(\|\nabla_\Gamma\varphi\|_{B(\overline{S_T})}+\|\nabla_\Gamma^2\varphi\|_{B(\overline{S_T})}+\|(\partial^\bullet)^2 \varphi\|_{B(\overline{S_T})}\right)
  \end{multline}
  for all $\varphi\in C^2(\overline{S_T})$, $n=0,1,\dots,N-1$, and $i=1,\dots,M$.
  Here,  $(\partial^\bullet)^2 \varphi$ is the second-order material derivative of $\varphi$.
\end{lemma}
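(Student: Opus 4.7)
I decompose the expression inside the modulus of \eqref{E:Consistency} as $R_1 + R_2 + R_3$, where $R_1 := (\tau^n)^{-1}(\varphi_i^{n+1}-\varphi_i^n) - \partial^\bullet\varphi(x_i^n,t^n)$ is the material-derivative error, $R_3 := -\varepsilon_i^n|V^{n,i}|^{-1}\sum_j \frac{\varphi_{i_j}^n-\varphi_i^n}{|E_j^{n,i}|}(h_{j,L}^{n,i}+h_{j,R}^{n,i})$ is the artificial viscosity contribution, and $R_2$ collects the remaining Hamiltonian-discretisation error. Throughout the proof I use, as consequences of \eqref{E:Epsilon_Tau} and \eqref{E:Quotient_EV}, the uniform bounds $\tau^n,\,\varepsilon_i^n \leq Ch$, $h_{j,L}^{n,i}+h_{j,R}^{n,i} \leq Ch$, $d^{n,i}\leq Ch$, and $|V^{n,i}| \geq c\,\Upsilon_i h^2$.

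For $R_1$, a second-order Taylor expansion of $t \mapsto \varphi(\Phi(x_i^0,t),t)$ at $t=t^n$ gives $|R_1| \leq \tfrac{\tau^n}{2}\|(\partial^\bullet)^2\varphi\|_{B(\overline{S_T})} \leq Ch\,\|(\partial^\bullet)^2\varphi\|_{B(\overline{S_T})}$. For $R_2$, the nonnegative weights $|V^{n,i}\cap T_j^{n,i}|/|V^{n,i}|$ sum to $1$, so \eqref{E:Hamiltonian_Lip_p} reduces the estimate to bounding $\max_j |\nabla_{\Gamma_h}(I_h^n\varphi)|_{T_j^{n,i}}-\nabla_\Gamma\varphi(x_i^n,t^n)|$. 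Working with the constant normal extension $\varphi_c$ (so $I_h^n\varphi = I_h^n\varphi_c$, since the vertices lie on $\Gamma(t^n)$), the classical linear-interpolation error on the flat triangle $T_j^{n,i}$ combined with \eqref{E:Estimate_Nabla_Ext}--\eqref{E:Estimate_Hess_Ext} yields $|\nabla_{\Gamma_h}(I_h^n\varphi_c)|_{T_j^{n,i}}-\nabla_{\Gamma_h}\varphi_c|_{T_j^{n,i}}| \leq Ch(\|\nabla_\Gamma\varphi\|+\|\nabla_\Gamma^2\varphi\|)$, while the remaining discrepancy between $\nabla_{\Gamma_h}\varphi_c|_{T_j^{n,i}}$ and $\nabla_\Gamma\varphi(x_i^n,t^n)$ is controlled by the normal comparison \eqref{nuest} and the smoothness of $\nabla_\Gamma\varphi$. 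Hence $|R_2| \leq Ch(\|\nabla_\Gamma\varphi\|+\|\nabla_\Gamma^2\varphi\|)$.

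The main obstacle is $R_3$: a pointwise Lipschitz bound $|\varphi_{i_j}^n-\varphi_i^n| \leq Ch\|\nabla_\Gamma\varphi\|$ produces only $|R_3| = O(1)\|\nabla_\Gamma\varphi\|$ since $\varepsilon_i^n/h \sim 1$, so cancellation across the $\Upsilon_i$ summands is essential. I would Taylor-expand $\varphi_c(x_{i_j}^n)-\varphi_c(x_i^n)$ about $x_i^n$ to write
\[
\frac{\varphi_{i_j}^n-\varphi_i^n}{|E_j^{n,i}|} = \nabla_\Gamma\varphi(x_i^n,t^n)\cdot\tau_j^{n,i} + O(h)\bigl(\|\nabla_\Gamma\varphi\|+\|\nabla_\Gamma^2\varphi\|\bigr),
\]
where $\tau_j^{n,i} := (x_{i_j}^n-x_i^n)/|E_j^{n,i}|$. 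The remainder contributes $\varepsilon_i^n|V^{n,i}|^{-1} \cdot O(\Upsilon_i h^2)(\cdots) = O(h)(\cdots)$, while the leading-order part is $-\varepsilon_i^n|V^{n,i}|^{-1}\,\vec W\cdot\nabla_\Gamma\varphi(x_i^n,t^n)$ with $\vec W := \sum_j(h_{j,L}^{n,i}+h_{j,R}^{n,i})\tau_j^{n,i}$. It therefore suffices to establish the geometric cancellation $|\vec W| \leq C\,\Upsilon_i h^2$.

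To prove this, on each piece $V^{n,i}\cap T_j^{n,i}$ --- a planar quadrilateral whose four sides are a length-$d^{n,i}$ segment on $E_j^{n,i}$, then $e_{j,R}^{n,i}$, then $e_{j+1,L}^{n,i}$, then a length-$d^{n,i}$ segment on $E_{j+1}^{n,i}$ --- I apply the planar identity $\int_{\partial\Omega}\mu\,d\mathcal{H}^1 = 0$ (valid as a vector equation for any bounded planar region), which reads
\[
d^{n,i}\,n_{E_j,T_j} + h_{j,R}^{n,i}\,\tau_j^{n,i} + h_{j+1,L}^{n,i}\,\tau_{j+1}^{n,i} + d^{n,i}\,n_{E_{j+1},T_j} = 0,
\]
with $n_{E,T}$ the in-plane outward unit normal to the edge $E$ inside the triangle $T$. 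Summing cyclically over $j$, the $\tau$-contributions (after reindexing $h_{j+1,L}^{n,i}\tau_{j+1}^{n,i}$) assemble into $\vec W$, while the $E$-contributions telescope into $d^{n,i}\sum_j(n_{E_j,T_j}+n_{E_j,T_{j-1}})$, giving $\vec W = -d^{n,i}\sum_j(n_{E_j,T_j}+n_{E_j,T_{j-1}})$. Each pair $n_{E_j,T_j}+n_{E_j,T_{j-1}}$ vanishes exactly when the two adjacent triangles are coplanar; in general its magnitude is controlled by $C|\nu_h|_{T_j}-\nu_h|_{T_{j-1}}|$ and hence, by \eqref{nuest} applied at a common point on $E_j^{n,i}$, by $Ch$. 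Together with $d^{n,i}\leq Ch$ this yields $|\vec W|\leq C\,\Upsilon_i h^2$ and therefore $|R_3|\leq Ch(\|\nabla_\Gamma\varphi\|+\|\nabla_\Gamma^2\varphi\|)$; the three bounds together give \eqref{E:Consistency}.
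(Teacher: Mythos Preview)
Your proof is correct and follows essentially the same approach as the paper: the decomposition into $R_1,R_2,R_3$ matches the paper's $I_1,I_2,I_3$, and your treatment of the critical viscosity term $R_3$ via the vector identity $\int_{\partial\Omega}\mu\,d\mathcal{H}^1=0$ on each planar piece $V^{n,i}\cap T_j^{n,i}$ is equivalent to the paper's application of the divergence theorem with the constant vector $p=\nabla_\Gamma\varphi(x_i^n,t^n)$, leading to the same conormal cancellation $|\mu_{j,L}^{n,i}+\mu_{j,R}^{n,i}|\leq Ch$. One small imprecision: your lower bound $|V^{n,i}|\geq c\,\Upsilon_i h^2$ should use the local scale $h_i:=\max_j h_{T_j^{n,i}}$ rather than the global $h$, but since your Taylor remainders are likewise $O(h_i)$ and $\varepsilon_i^n\leq Ch_i$, the final estimate $|R_3|\leq Ch(\cdots)$ goes through unchanged.
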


\begin{proof}
Using    \eqref{E:Scheme_Iteration} and \eqref{E:Numerical_Hamiltonian} we have
  \begin{align*}
    \frac{\varphi_i^{n+1}-[S_h^n(I_h^n\varphi)]_i}{\tau^n} = 
    \frac{\varphi_i^{n+1}-\varphi_i^n}{\tau^n}+H_i^n(\varphi_i^n,\varphi_{i_1}^n,\dots,\varphi_{i_{\Upsilon_i}}^n).
  \end{align*}
  Let us set
  \begin{align*}
    I_1 &:= \frac{\varphi_i^{n+1}-\varphi_i^n}{\tau^n}-\partial^\bullet \varphi(x_i^n,t^n), \\
    I_2 &:= \sum_{j=1}^{\Upsilon_i}\frac{|V^{n,i}\cap T_j^{n,i}|}{|V^{n,i}|}H(x_i^n,t^n,\nabla_{\Gamma_h}I_h^n\varphi|_{T_j^{n,i}})-H(x_i^n,t^n,\nabla_\Gamma\varphi(x_i^n,t^n)), \\
    I_3 &:= -\frac{\varepsilon^n_i}{|V^{n,i}|}\sum_{j=1}^{\Upsilon_i}\frac{\varphi_{i_j}^n-\varphi_i^n}{|E_j^{n,i}|}(h_{j,L}^{n,i}+h_{j,R}^{n,i}),
  \end{align*}
  so that
  \begin{equation}  \label{i1i2i3}
    \frac{\varphi_i^{n+1}-[S_h^n(I_h^n\varphi)]_i}{\tau^n}-\{ \partial^\bullet \varphi(x_i^n,t^n)+H(x_i^n,t^n,\nabla_\Gamma\varphi(x_i^n,t^n))\} = I_1+I_2+I_3
  \end{equation}
  and estimate $I_1$, $I_2$, and $I_3$ separately.
  From $ \varphi_i^n = \varphi(x_i^n,t^n) = \varphi(\Phi(x_i^0,t^n),t^n)$ 
  and the definition of the material derivative it follows that
  \begin{align*}
    \frac{\varphi_i^{n+1}-\varphi_i^n}{\tau^n} =
     \frac{1}{\tau^n}\int_{t^n}^{t^{n+1}}\frac{d}{dt}\bigl(\varphi(\Phi(x_i^0,s),s)\bigr)\,ds = 
     \frac{1}{\tau^n}\int_{t^n}^{t^{n+1}}\partial^\bullet \varphi(\Phi(x_i^0,s),s)\,ds.
  \end{align*}
  Applying the definition of the material derivative again we obtain
  \begin{displaymath}
    I_1  
    =  \frac{1}{\tau^n}\int_{t^n}^{t^{n+1}}\{\partial^\bullet \varphi(\Phi(x_i^0,s),s)-\partial^\bullet \varphi(\Phi(x_0^i,t^n),t^n)\}ds  
    =  \frac{1}{\tau^n}\int_{t^n}^{t^{n+1}} 
    \int_{t^n}^s (\partial^\bullet)^2 \varphi(\Phi(x_i^0,\tilde{s}),\tilde{s})\,d\tilde{s} \, ds.
  \end{displaymath}
  Since $\varphi\in C^2(\overline{S_T})$, the second-order material derivative $(\partial^\bullet)^2 \varphi$ is bounded on $\overline{S_T}$.
  Hence by the above equality, $t^{n+1}-t^n=\tau^n$, and \eqref{E:Epsilon_Tau} we obtain
  \begin{align} \label{Pf_C:Estimate_I1}
    |I_1| \leq \frac{(t^{n+1}-t^n)^2}{\tau^n}\| (\partial^\bullet)^2 \varphi\|_{B(\overline{S_T})} 
    = \tau^n \| (\partial^\bullet)^2 \varphi\|_{B(\overline{S_T})} \leq 
    C h\| (\partial^\bullet)^2 \varphi\|_{B(\overline{S_T})}.
  \end{align}
  Next we estimate $I_2$.
  From now on, we suppress $t^n$ in all functions and $x_i^n$ in the Hamiltonian. Clearly,
  \begin{align} \label{Pf_C:Formula_I2}
    I_2 = \sum_{j=1}^{\Upsilon_i}\frac{|V^{n,i}\cap T_j^{n,i}|}{|V^{n,i}|}\left\{H\left(\nabla_{\Gamma_h}I_h^n\varphi|_{T_j^{n,i}}\right)-H(\nabla_\Gamma\varphi(x_i^n))\right\}.
  \end{align}
  For each $j=1,\dots,\Upsilon_i$, the inequality \eqref{E:Hamiltonian_Lip_p} yields
  \begin{align} \label{Pf_C:Est1_H_Phi}
    \left|H\left(\nabla_{\Gamma_h}I_h^n\varphi|_{T_j^{n,i}}\right)-H(\nabla_\Gamma\varphi(x_i^n))\right| \leq L_{H,2}\left|\nabla_{\Gamma_h}I_h^n\varphi|_{T_j^{n,i}}-\nabla_\Gamma\varphi(x_i^n)\right|.
  \end{align}

\noindent
Abbreviating $\varphi^{-l}(x):= \varphi(\pi_h(x)), x \in \Gamma_h$ we may write
\begin{equation}  \label{i2est}
 \nabla_{\Gamma_h}I_h^n\varphi|_{T_j^{n,i}}-\nabla_\Gamma\varphi(x_i^n) = \bigl(
\nabla_{\Gamma_h}I_h^n\varphi|_{T_j^{n,i}}-\nabla_{\Gamma_h} \varphi^{-l}(x^n_i) \bigr) 
 + \bigl( \nabla_{\Gamma_h} \varphi^{-l}(x^n_i) - \nabla_\Gamma\varphi(x_i^n) \bigr) \equiv
A+B. 
\end{equation}
Since $I^n_h \varphi_{| T^{n,i}_j}$ is the linear interpolation of $\varphi^{-l}_{|T^{n,i}_j}$ we obtain
\begin{equation}  \label{aest}
| A | \leq C h \Vert \nabla_{\Gamma_h}^2 \varphi^{-l} \Vert_{B(T^{n,i}_j)} \leq Ch \bigl( \|\nabla_\Gamma \varphi\|_{B(\overline{S_T})}+\|\nabla_\Gamma^2\varphi\|_{B(\overline{S_T})} \bigr).
\end{equation}
On the other hand, we infer  from (4.18) in \cite{DE13Acta} and the relations 
$\pi_h(x^n_i)=x^n_i, d(x^n_i)=0, \, \nu(x^n_i) \cdot \nabla_{\Gamma} \varphi(x^n_i)=0$ that 
\begin{displaymath}
B =  (I_3 - \nu^{n,i}_j \otimes \nu^{n,i}_j) \nabla_\Gamma\varphi(x_i^n) - \nabla_\Gamma\varphi(x_i^n)  
 =  \bigl( \nu(x^n_i) \otimes \nu(x^n_i) -  \nu^{n,i}_j \otimes \nu^{n,i}_j \bigr) \nabla_\Gamma\varphi(x_i^n),
\end{displaymath}
so that by (\ref{nuest})
\begin{equation}  \label{best}
| B | \leq 2 \Vert  \nu - \nu_h \Vert_{B(T^{n,i}_j)}  \|\nabla_\Gamma \varphi\|_{B(\overline{S_T})} \leq C  h  \|\nabla_\Gamma \varphi\|_{B(\overline{S_T})}.
\end{equation}
Combining (\ref{Pf_C:Formula_I2})--(\ref{best}) we obtain
  \begin{align} \label{Pf_C:Estimate_I2}
    |I_2| \leq Ch \left(\|\nabla_\Gamma \varphi\|_{B(\overline{S_T})}+\|\nabla_\Gamma^2\varphi\|_{B(\overline{S_T})}\right).
  \end{align}
Finally, let us write
 \begin{align} \label{Pf_C:Formula_I3}
    I_3 = \frac{\varepsilon^n_i}{|V^{n,i}|}(J_1+J_2),
  \end{align}
where
  \begin{align} \label{Pf_C:Def_J1_J2}
    \begin{aligned}
      J_1 &:= -\sum_{j=1}^{\Upsilon_i}\frac{h_{j,L}^{n,i}+h_{j,R}^{n,i}}{|E_j^{n,i}|}\{(\varphi_{i_j}^n-\varphi_i^n)-
      \nabla_{\Gamma} \varphi(x^n_i) \cdot  (x^n_{i_j} - x^n_i) \}, \\
      J_2 &:= -\sum_{j=1}^{\Upsilon_i}\left(\nabla_\Gamma\varphi(x_i^n)\cdot\frac{x_{i_j}^n-x_i^n}{|E_j^{n,i}|}\right)(h_{j,L}^{n,i}+h_{j,R}^{n,i}).
    \end{aligned}
  \end{align}
Extending $\varphi$ constantly in normal direction via $\varphi_c$ and recalling (\ref{E:Ext_f}) we have
  \begin{eqnarray*}
\lefteqn{ \hspace{-7mm}    \varphi_{i_j}^n-\varphi_i^n-  \nabla_{\Gamma} \varphi(x^n_i) \cdot  (x^n_{i_j} - x^n_i)  =
\varphi_c(x^n_{i_j}) - \varphi_c(x^n_i)- \nabla \varphi_c(x^n_i) \cdot  (x^n_{i_j} - x^n_i)  } \\
      &=&  \int_0^1\{\nabla\varphi_c(x_i^n+s(x_{i_j}^n-x_i^n))-\nabla\varphi_c(x_i^n)\}\,ds \cdot (x_{i_j}^n-x_i^n) \\
      &= &\int_0^1\left(\int_0^s\nabla^2\varphi_c(x_i^n+\tilde{s}(x_{i_j}^n-x_i^n))(x_{i_j}^n-x_i^n)\,d\tilde{s}\right)ds
      \cdot (x_{i_j}^n-x_i^n).
\end{eqnarray*}
Thus, we deduce from (\ref{E:Estimate_Hess_Ext}) and (\ref{E:Quotient_EV}) that
  \begin{align} \label{Pf_C:Estimate_J1}
    |J_1| \leq C \left(\|\nabla_\Gamma \varphi\|_{B(\overline{S_T})}+\|\nabla_\Gamma^2\varphi\|_{B(\overline{S_T})}\right) \sum_{j=1}^{\Upsilon_i}  | E^{n,i}_j|^2.
  \end{align}
  To estimate $J_2$ we observe that
  \begin{equation}  \label{divp}
    0 = \int_{V^{n,i}}\mathrm{div}_{\Gamma_h}p\,d\mathcal{H}^2 = \sum_{j=1}^{\Upsilon_i}\int_{V^{n,i}\cap T_j^{n,i}}\mathrm{div}_{\Gamma_h}p\,d\mathcal{H}^2
  \end{equation}
  for the constant vector $p=\nabla_\Gamma\varphi(x_i^n)\in\mathbb{R}^3$.
  For each $j=1,\dots,\Upsilon_i$, $V^{n,i}\cap T_j^{n,i}$ is a flat quadrilateral whose sides consist of the edges $e_{j,R}^{n,i}$, $e_{j+1,L}^{n,i}$, and
  \begin{align*}
    S_{j,L}^{n,i} := E_j^{n,i}\cap\partial(V^{n,i}\cap T_j^{n,i}), \quad S_{j,R}^{n,i} :=E^{n,i}_{j+1}\cap\partial(V^{n,i}\cap T_j^{n,i}).
  \end{align*}
  The unit outward co-normal $\mu_j^{n,i}$ to $\partial(V^{n,i}\cap T_j^{n,i})$ (i.e. the unit outward normal to $\partial(V^{n,i}\cap T_j^{n,i})$ that is tangent to $T_j^{n,i}$) is given by
  \begin{align*}
    \mu_j^{n,i} =
    \begin{cases}
      \mu_{j,E}^{n,i} &\text{on}\quad e_{j,R}^{n,i}, \\
      \mu_{j+1,E}^{n,i} &\text{on}\quad e_{j+1,L}^{n,i}, \\
      \mu_{j,L}^{n,i} &\text{on}\quad S_{j,L}^{n,i}, \\
      \mu_{j+1,R}^{n,i} &\text{on}\quad S_{j,R}^{n,i},
    \end{cases}
  \end{align*}
  where  (see Figure~\ref{F:Pf_C_VcapT})
  \begin{align} \label{E:Co-normal_Edge}
\mu_{j,E}^{n,i} := \frac{x_{i_j}^n-x_i^n}{|x_{i_j}^n-x_i^n|} \quad \mbox{and} \quad   \mu_{j,L}^{n,i} := \nu_j^{n,i}\times\mu_{j,E}^{n,i}, \quad \mu_{j,R}^{n,i} := -\nu_{j-1}^{n,i}\times\mu_{j,E}^{n,i}.
\end{align}

 \begin{figure}[tb]
  \centering
  \includegraphics[scale=0.65,clip]{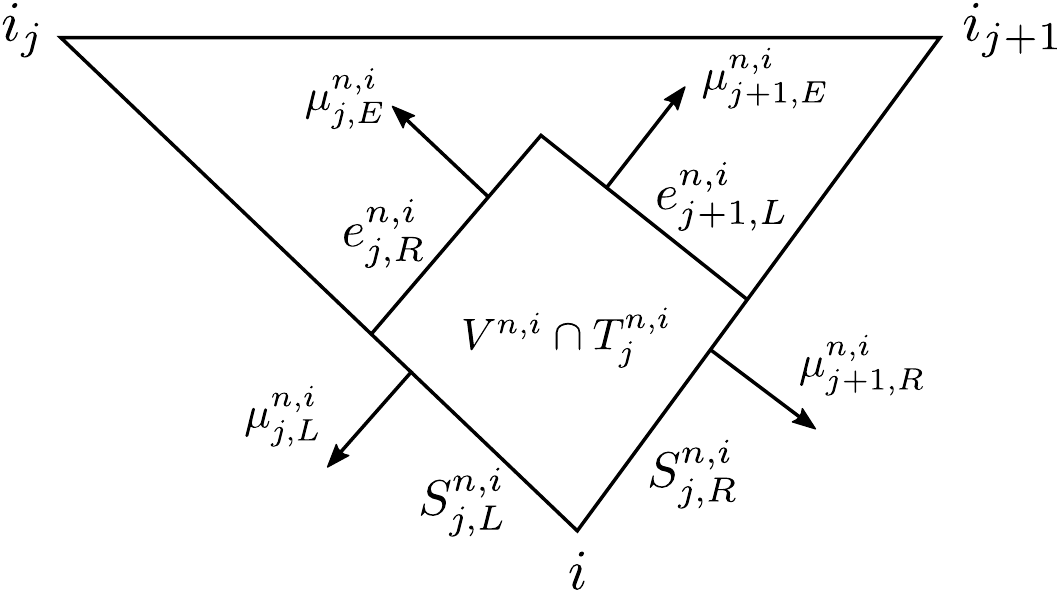}
  \hspace{2mm} \hfill
   \includegraphics[scale=0.65,clip]{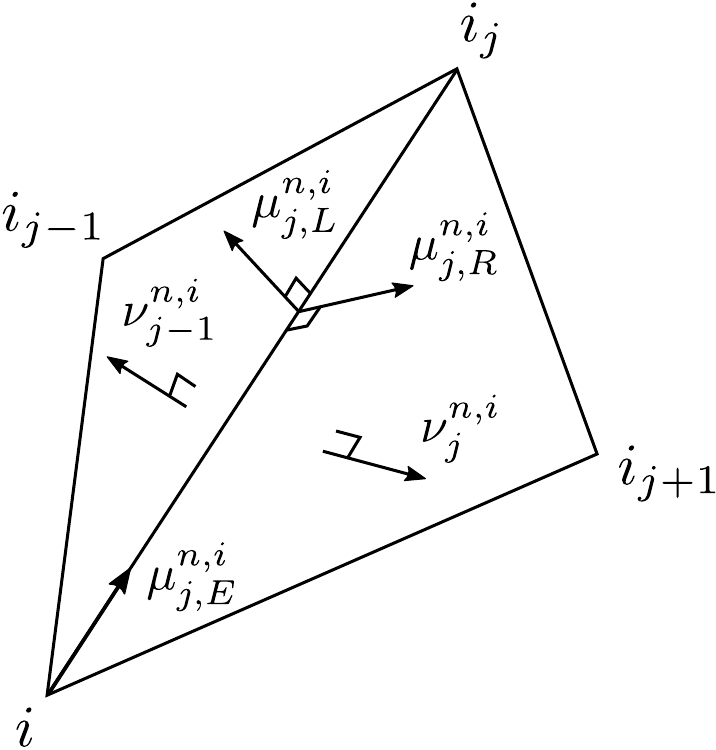}
  \caption{}
  \label{F:Pf_C_VcapT}
  \end{figure}

\noindent
Here, $\times$ denotes the vector product in $\mathbb{R}^3$.
Using the divergence theorem for integrals over a flat quadrilateral we have
  \begin{eqnarray*}
    \int_{V^{n,i}\cap T_j^{n,i}}\mathrm{div}_{\Gamma_h}p\,d\mathcal{H}^2  &= &  \int_{e_{j,R}^{n,i}}p\cdot\mu_{j,E}^{n,i}\,d\mathcal{H}^1+\int_{e_{j+1,L}^{n,i}}p\cdot\mu_{j+1,E}^{n,i}\,
    d\mathcal{H}^1  
+\int_{S_{j,L}^{n,i}}p\cdot\mu_{j,L}^{n,i}\,d\mathcal{H}^1+\int_{S_{j,R}^{n,i}}p\cdot\mu_{j+1,R}^{n,i}\,
d\mathcal{H}^1  \\
& = & p\cdot\{h_{j,R}^{n,i} \, \mu_{j,E}^{n,i}+h_{j+1,L}^{n,i} \, \mu_{j+1,E}^{n,i}+d^{n,i}(\mu_{j,L}^{n,i}+
\mu_{j+1,R}^{n,i})\},
  \end{eqnarray*}
since $ |e_{j,R}^{n,i}| = h_{j,R}^{n,i}$, $ |e_{j+1,L}^{n,i}| = h_{j+1,L}^{n,i}$ and  $ |S_{j,L}^{n,i}| = |S_{j,R}^{n,i}| 
= d^{n,i}$ by the definition of the volume $V^{n,i}$. 
  Summing up both sides of the above equality over $j=1,\dots,\Upsilon_i$ we obtain from (\ref{divp})
  \begin{align*}
0    = \sum_{j=1}^{\Upsilon_i}(p\cdot\mu^{n,i}_{j,E})(h_{j,L}^{n,i}+h_{j,R}^{n,i})+d^{n,i}\sum_{j=1}^{\Upsilon_i}p\cdot(\mu_{j,L}^{n,i}+\mu_{j,R}^{n,i}) 
    = -J_2+d^{n,i}\sum_{j=1}^{\Upsilon_i}\nabla_\Gamma\varphi(x_i^n)\cdot(\mu_{j,L}^{n,i}+\mu_{j,R}^{n,i}).
  \end{align*}
  Here the last line follows from $p=\nabla_\Gamma\varphi(x_i^n)$, \eqref{E:Co-normal_Edge}, and \eqref{Pf_C:Def_J1_J2}.   Hence
  \begin{align} \label{Pf_C:Estimate_J2}
 |   J_2 |  =|  d^{n,i}\sum_{j=1}^{\Upsilon_i}\nabla_\Gamma\varphi(x_i^n)\cdot(\mu_{j,L}^{n,i}+\mu_{j,R}^{n,i}) |
 \leq  C d^{n,i}  \|\nabla_\Gamma\varphi\|_{B(\overline{S_T})} \max_j | \mu_{j,L}^{n,i}+\mu_{j,R}^{n,i} |.
  \end{align}
Note that, contrary to the case of a flat stationary domain considered in~\cite{KL15JCM}, the equality $\mu^{n,i}_{j,L}=-\mu_{j,R}^{n,i}$ does not hold in general because the triangles $T_{j-1}^{n,i}$ and $T_j^{n,i}$ do not lie in the same plane. Instead we  deduce from \eqref{E:Co-normal_Edge} and $|\mu_{j,E}^{n,i}|=1$ 
  \begin{eqnarray}
 \lefteqn{   |\mu_{j,L}^{n,i}+\mu_{j,R}^{n,i}| = |(\nu_{j}^{n,i}-\nu_{j-1}^{n,i})\times\mu_{j,E}^{n,i}| \leq |\nu_{j}^{n,i}-\nu_{j-1}^{n,i}| } \nonumber \\
    &\leq &  |\nu_j^{n,i}-\nu(x_i^n,t^n)|+|\nu(x_i^n,t^n)-\nu_{j-1}^{n,i}| \leq C h  \label{j2est}
  \end{eqnarray}
  by (\ref{nuest}). Inserting  \eqref{Pf_C:Estimate_J1}, \eqref{Pf_C:Estimate_J2} with (\ref{j2est})  into
  \eqref{Pf_C:Formula_I3} and taking into account \eqref{E:Quotient_EV} as well as  \eqref{E:Epsilon_Tau} 
  we derive
  \begin{eqnarray}
    |I_3| & \leq & C \frac{ \varepsilon^n_i}{| V^{n,i}|}\left( \sum_{j=1}^{\Upsilon_i} | E^{n,i}_j |^2 +
    d^{n,i} h \right) \left(\|\nabla_\Gamma \varphi\|_{B(\overline{S_T})}+\|\nabla_\Gamma^2\varphi\|_{B(\overline{S_T})}\right) \nonumber \\
    & \leq & Ch  \left(\|\nabla_\Gamma \varphi\|_{B(\overline{S_T})}+\|\nabla_\Gamma^2\varphi\|_{B(\overline{S_T})}\right). \label{Pf_C:Estimate_I3}
  \end{eqnarray}
The result now  follows from (\ref{i1i2i3}) together with   \eqref{Pf_C:Estimate_I1}, \eqref{Pf_C:Estimate_I2} and \eqref{Pf_C:Estimate_I3}.
\end{proof}

\section{Convergence to viscosity solutions} \label{S:Convergence}
The purpose of this section is to prove that the approximate solution generated by the scheme 
\eqref{E:Scheme_Initial}--\eqref{E:Numerical_Hamiltonian} converges to a viscosity solution of 
the Hamilton--Jacobi equation \eqref{E:HJ_Equation} providing at the same time an existence result
for this problem. We start with a technical result that compares the nodal values of a solution
of the scheme  with those at the initial time, see Lemma 2.3 in \cite{KL15JCM}
for a similar result in the flat case.

\begin{lemma} \label{L:Nodal_FE_Push}
Suppose that $v^n_h=\sum_{i=1}^M v^n_i \chi(\cdot,t^n) \in V^n_h$ is a solution of $v^{n+1}_h=
S^n_h(v^n_h),n=0,\ldots,N-1$ with initial data $v^0_h(x^0_i)=v_0(x^0_i),i=1,\ldots,M$, where
$v_0:\Gamma(0) \rightarrow \mathbb{R}$ is Lipschitz continuous with constant $L_0$.
If  \eqref{E:Epsilon_Tau} holds,
  then there exists a constant $C_4>0$ depending  on $\gamma, \, H$ and $L_0$ such that
  \begin{align} \label{E:Nodal_FE_Push}
  \max_{i=1,\ldots,M}  |v_i^n-v_i^0| \leq C_4t^n, \quad n=0,1,\ldots,N.
  \end{align}
\end{lemma}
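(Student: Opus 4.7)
The plan is to apply monotonicity (Lemma~\ref{L:Monotonicity}) together with the translation invariance \eqref{E:Invariance_Constant} to compare $v_h^n$ with the two reference elements $\overline{w}_h^n := w_h^n + C_4 t^n$ and $\underline{w}_h^n := w_h^n - C_4 t^n$ in $V_h^n$, where $w_h^n := \sum_{i=1}^M v_i^0 \chi_i(\cdot,t^n) \in V_h^n$ is the element that carries the initial nodal values along the moving triangulation. The key intermediate goal is a uniform estimate
\begin{equation*}
\bigl| H_i^n\bigl(v_i^0, v_{i_1}^0, \dots, v_{i_{\Upsilon_i}}^0\bigr) \bigr| \leq C_4 \quad \text{for all } n=0,\dots,N-1,\ i=1,\dots,M,
\end{equation*}
with $C_4$ depending only on $\gamma$, $H$ and $L_0$.

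Given this bound, I would prove \eqref{E:Nodal_FE_Push} by induction on $n$. Assuming $v_i^n \leq v_i^0 + C_4 t^n$ at every vertex, which is equivalent to $v_h^n \leq \overline{w}_h^n$ on $\Gamma_h(t^n)$, Lemma~\ref{L:Monotonicity} and \eqref{E:Invariance_Constant} give $v_h^{n+1} \leq S_h^n(w_h^n) + C_4 t^n$. Evaluating at a vertex via \eqref{E:Scheme_Iteration},
\begin{equation*}
v_i^{n+1} \leq v_i^0 - \tau^n H_i^n\bigl(v_i^0,\dots,v_{i_{\Upsilon_i}}^0\bigr) + C_4 t^n \leq v_i^0 + C_4 t^{n+1},
\end{equation*}
while the lower bound $v_i^{n+1} \geq v_i^0 - C_4 t^{n+1}$ follows by the symmetric comparison against $\underline{w}_h^n$, so that both directions close the induction precisely when $|H_i^n(v_i^0,\dots)| \leq C_4$.

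The core task is therefore the uniform bound on $H_i^n$ at the initial nodal values. First I would combine the Lipschitz continuity of $v_0$ with the uniform Lipschitz continuity of the inverse flow map $\Phi(\cdot,t^n)^{-1}\colon\Gamma(t^n)\to\Gamma_0$ (holding on the compact set $\Gamma_0\times[0,T]$) to obtain
\begin{equation*}
|v_{i_j}^0 - v_i^0| = |v_0(x_{i_j}^0) - v_0(x_i^0)| \leq L_0 |x_{i_j}^0 - x_i^0| \leq C L_0 |E_j^{n,i}|.
\end{equation*}
Together with the standard affine-basis estimate $|\nabla_{\Gamma_h}\chi_k|_{T_j^{n,i}}| \leq C/|E_j^{n,i}|$ produced by the regularity \eqref{E:Quotient_Diam_Local}, this yields $|\nabla_{\Gamma_h} w_h^n|_{T_j^{n,i}}| \leq C L_0$. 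Applying \eqref{E:Hamiltonian_Lip_p} and using that $(x,t)\mapsto H(x,t,0)$ is bounded on $\overline{S_T}$ (a consequence of \eqref{E:Hamiltonian_Lip_xt} and compactness), the convective sum in \eqref{E:Numerical_Hamiltonian} is then bounded by a constant of the required form.

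The hard part will be the diffusive sum
\begin{equation*}
\frac{\varepsilon_i^n}{|V^{n,i}|}\sum_{j=1}^{\Upsilon_i}\frac{v_{i_j}^0 - v_i^0}{|E_j^{n,i}|}\bigl(h_{j,L}^{n,i}+h_{j,R}^{n,i}\bigr),
\end{equation*}
which requires that the upper scaling $\varepsilon_i^n \leq C_1 \max_j h_{T_j^{n,i}}$ in \eqref{E:Epsilon_Tau} be precisely what is needed to absorb the singular factor $1/|E_j^{n,i}|$. The Lipschitz estimate removes that factor, leaving $CL_0 \varepsilon_i^n |V^{n,i}|^{-1}\sum_j(h_{j,L}^{n,i}+h_{j,R}^{n,i})$, which by the identity $|V^{n,i}| = \tfrac{1}{2} d^{n,i} \sum_j (h_{j,L}^{n,i}+h_{j,R}^{n,i})$ and $\varepsilon_i^n \leq C d^{n,i}$ (using $\max_j h_{T_j^{n,i}} \leq C d^{n,i}$ from \eqref{E:Quotient_EV}) collapses to a constant. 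This delivers the bound on $H_i^n$ and hence, through the induction outlined above, establishes \eqref{E:Nodal_FE_Push}.
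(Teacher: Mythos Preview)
Your proposal is correct and follows essentially the same approach as the paper. Your element $w_h^n$ with constant nodal values $v_i^0$ is precisely the paper's interpolant $I_h^n v_0^\sharp$ of the push-forward $v_0^\sharp(x,t)=v_0(\Phi^{-1}(x,t))$, and your argument for the uniform bound on $H_i^n(v_i^0,\dots)$ together with the induction via monotonicity and translation invariance matches the paper's proof step for step.
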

\begin{proof} Let us denote by $v_0^\sharp$  the push-forward of $v_0$ i.e.
$ v_0^\sharp(x,t) := v_0(\Phi^{-1}(x,t)), (x,t)\in\overline{S_T}$
and by $I^n_h v_0^\sharp \in V^n_h$ its interpolant.  Since $x^n_i=\Phi(x^0_i,t^n)$ we have
  \begin{align}   \label{Pf_NFP:Nodal_Inter_Push}
    [I_h^nv_0^\sharp]_i = I_h^nv_0^\sharp(x_i^n) = v_0^\sharp(x_i^n,t^n) = v_0(\Phi^{-1}(x_i^n,t^n))=v_0(x^0_i),
    \; i=1,\ldots,M.
  \end{align}
  Note that the right-hand side is independent of $n$. We claim that there exists a constant
  $R\geq 0$ such that
  \begin{equation}  \label{nablabound}
  | \nabla_{\Gamma_h} I^n_h v_0^\sharp | \leq R \quad \mbox{ on } \Gamma_h(t^n).
  \end{equation}
  To see this, let us fix a triangle $K(t^n) \subset \Gamma_h(t^n)$ whose vertices are denoted for
  simplicity by $x^n_1,x^n_2$ and $x^n_3$. By transforming onto the unit triangle,  using
  (\ref{Pf_NFP:Nodal_Inter_Push}), the Lipschitz continuity of $v_0$ and $\Phi^{-1}$ as well as (\ref{E:Quotient_Diam_Local}) we obtain
  \begin{eqnarray*}
  | \nabla_{\Gamma_h} I^n_h v^\sharp_{0 | K(t^n)} | & \leq & \frac{C}{\rho_{K(t^n)}} 
  \max_{i=2,3} |   I^n_h v_0^\sharp(x^n_i)-
  I^n_h v_0^\sharp(x^n_1) | 
   =    \frac{C}{\rho_{K(t^n)}} \max_{i=2,3} | v_0(x^0_i) -v_0(x^0_1) | \\
   &  \leq &   \frac{C L_0}{\rho_{K(t^n)}} \max_{i=2,3} 
  | x^0_i -x^0_1 | 
   =  \frac{C L_0}{\rho_{K(t^n)}} \max_{i=2,3} | \Phi^{-1}(x^n_i,t^n) - \Phi^{-1}(x^n_1,t^n) | \\
   &  \leq &  \frac{C L_0}{\rho_{K(t^n)}} \max_{i=2,3} | x^n_i - x^n_1 | \leq  C L_0 \gamma =:R
   \end{eqnarray*}
proving (\ref{nablabound}).  Recalling the definition \eqref{E:Numerical_Hamiltonian} of the
numerical Hamiltonian we deduce with the help of (\ref{nablabound}) and \eqref{E:Epsilon_Tau} that
\begin{eqnarray}
\lefteqn{ \hspace{-1cm}
 |H_i^n([I_h^nv_0^\sharp]_i,[I_h^nv_0^\sharp]_{i_1},\dots,[I_h^nv_0^\sharp]_{i_{\Upsilon_i}})|   } \nonumber \\ 
&  \leq &   \sum_{j=1}^{\Upsilon_i}\frac{|V^{n,i}\cap T_j^{n,i}|}{|V^{n,i}|} \big| H\bigl(x_i^n,t^n,\nabla_{\Gamma_h}I_h^nv_0^\sharp|_{T_j^{n,i}}\bigr)  \big| + 
\frac{\varepsilon^n_i}{|V^{n,i}|}\sum_{j=1}^{\Upsilon_i}\frac{| [I_h^nv_0^\sharp]_{i_j}-[I_h^nv_0^\sharp]_{i} |}{|E_j^{n,i}|}(h_{j,L}^{n,i}+h_{j,R}^{n,i}) \nonumber  \\
& \leq & \max_{(x,t) \in \overline{S_T}, | p | \leq R} | H(x,t,p) | + C \,  \frac{\max_j (h^{n,i}_j)^2}{| V^{n,i} |}
 \leq C_4   \label{Pf_NFP:Estimate_Hamiltonian}
\end{eqnarray}
where $C_4$ can be chosen  independently of  $i$ and $n$.

\noindent
 Now let us show by induction with respect to $n=0,1,\dots,N$ that
  \begin{align} \label{Pf_NFP:Left_Ineq}
    v_i^n \leq [I_h^nv_0^\sharp]_i+C_4 t^n \quad\text{for all}\quad i=1,\dots,M.
  \end{align}
 Since $v^0_i=v_0(x^0_i)=[I^0_h v_0^\sharp]_i$ 
  the inequality \eqref{Pf_NFP:Left_Ineq} holds for $n=0$.
  Let us assume that \eqref{Pf_NFP:Left_Ineq} is true for some $n\in\{0,1,\dots,N-1\}$   
  so that $v_h^n\leq I_h^nv_0^\sharp+C_4t^n$ on $\Gamma_h(t^n)$.
  Applying  Lemma~\ref{L:Monotonicity} together with  \eqref{E:Invariance_Constant} we infer that 
  \begin{align*}
    v_h^{n+1} = S_h^n(v_h^n) \leq S_h^n(I_h^nv_0^\sharp+C_4t^n) = S_h^n(I_h^nv_0^\sharp)+C_4t^n
  \end{align*}
  on $\Gamma_h(t^{n+1})$, and hence   by \eqref{E:Scheme_Iteration}, \eqref{E:Numerical_Hamiltonian}, and \eqref{Pf_NFP:Estimate_Hamiltonian}
  \begin{eqnarray*}
    v_i^{n+1} & \leq &  [S_h^n(I_h^nv_0^\sharp)]_i+C_4t^n 
     =   [I_h^nv_0^\sharp]_i-\tau^n H_i^n([I_h^nv_0^\sharp]_i,[I_h^nv_0^\sharp]_{i_1},\dots,[I_h^nv_0^\sharp]_{i_{\Upsilon_i}}) + C_4 t^n \\
    &\leq &  [I_h^nv_0^\sharp]_i+ C_4 \tau^n +C_4 t^n = [I_h^{n+1}v_0^\sharp]_i+C_4t^{n+1}
  \end{eqnarray*}
for all $i=1,\dots,M$, where we used  \eqref{Pf_NFP:Nodal_Inter_Push} in the last step.
  Hence we see by induction that \eqref{Pf_NFP:Left_Ineq} holds for all $n=0,1,\dots,N$.
  By the same argument we can show that $[I_h^nv_0^\sharp]_i-C_4t^n\leq v_i^n$ for all $n=0,1,\dots,N$ and $i=1,\dots,M$.
  Finally, \eqref{Pf_NFP:Nodal_Inter_Push}, \eqref{Pf_NFP:Left_Ineq}, and the above inequality yield \eqref{E:Nodal_FE_Push}.
\end{proof}

\noindent
Let us denote by $u_h^n=\sum_{i=1}^Mu_i^n\chi_i(\cdot,t^n)\in V_h^n$, $n=0,1,\dots,N$  
the finite element function on $\Gamma_h(t^n)$ given by the numerical scheme \eqref{E:Scheme_Initial}--\eqref{E:Numerical_Hamiltonian}.
Now we define an approximate solution $u_h^l: \overline{S_T} \rightarrow \mathbb{R}$ by
\begin{align} \label{E:Approx_Sol}
  u_h^l(x,t) = \sum_{i=1}^Mu_i^n\chi_i^l(x,t), \quad t\in[t^n,t^{n+1}), \, x\in\Gamma(t)
\end{align}
for $n=0,1,\dots,N-1$ (we include $t=t^N=T$ when $n=N-1$), where $u_0$ is a given function on $\Gamma(0)$. 
For $(x,t)\in\overline{S_T}$ set
\begin{align} \label{E:Approx_Limit}
  \bar{u}(x,t) := \limsup_{\substack{h\to0 \\ \overline{S_T}\ni(y,s)\to(x,t)}}u_h^l(y,s), \quad \underline{u}(x,t) := \liminf_{\substack{h\to0 \\ \overline{S_T}\ni(y,s)\to(x,t)}}u_h^l(y,s).
\end{align}
It follows from \cite[Section~V.2.1, Proposition~2.1]{BC97book} that
$\bar{u}\in USC(\overline{S_T})$ and $\underline{u}\in LSC(\overline{S_T})$. 
Our aim is to show that $\bar{u}$ (resp. $\underline{u}$) is a subsolution (resp. supersolution) to \eqref{E:HJ_Equation}. As a first step we prove

\begin{lemma} \label{L:Limit_Initial}
  Let $\bar{u}$ and $\underline{u}$ be given by \eqref{E:Approx_Sol}--\eqref{E:Approx_Limit}.
  Assume that  \eqref{E:Epsilon_Tau} is satisfied and that $u_0\in C(\Gamma(0))$.
  Then $\bar{u}(\cdot,0)=\underline{u}(\cdot,0)=u_0$ on $\Gamma(0)$.
\end{lemma}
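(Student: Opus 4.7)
The plan is to exploit the monotonicity (Lemma~\ref{L:Monotonicity}), the translation invariance \eqref{E:Invariance_Constant} and the push--forward estimate (Lemma~\ref{L:Nodal_FE_Push}) after first replacing $u_0$ by a Lipschitz approximation, since Lemma~\ref{L:Nodal_FE_Push} requires Lipschitz initial data while we only assume $u_0\in C(\Gamma(0))$.

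Fix $x\in\Gamma(0)$ and $\eta>0$. Since $\Gamma(0)$ is compact, the function $u_0$ is uniformly continuous with modulus of continuity $\omega_0$, and a standard argument (for instance inf--sup convolution in the extrinsic Euclidean metric) yields a Lipschitz function $u_0^\eta\colon\Gamma(0)\to\mathbb{R}$ with Lipschitz constant $L_\eta$ satisfying $\|u_0-u_0^\eta\|_{B(\Gamma(0))}\leq\eta$. Let $v_h^n=\sum_{i=1}^M v_i^n\chi_i(\cdot,t^n)\in V_h^n$ be the discrete solution obtained by applying the scheme \eqref{E:Scheme_Initial}--\eqref{E:Numerical_Hamiltonian} with initial data $u_0^\eta$ in place of $u_0$. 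At the initial time $|u_i^0-v_i^0|=|u_0(x_i^0)-u_0^\eta(x_i^0)|\leq\eta$, so that $v_h^0-\eta\leq u_h^0\leq v_h^0+\eta$ on $\Gamma_h(0)$. Applying Lemma~\ref{L:Monotonicity} and \eqref{E:Invariance_Constant} inductively, we obtain
\begin{equation*}
v_h^n-\eta \leq u_h^n \leq v_h^n+\eta \quad\text{on } \Gamma_h(t^n), \qquad n=0,1,\dots,N.
\end{equation*}

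Combining this with Lemma~\ref{L:Nodal_FE_Push} applied to $v_h^n$ (which provides a constant $C_4=C_4(\gamma,H,L_\eta)$) gives for every vertex $i$
\begin{equation*}
|u_i^n-u_0(x_i^0)| \leq |u_i^n-v_i^n|+|v_i^n-v_i^0|+|v_i^0-u_0(x_i^0)| \leq 2\eta+C_4 t^n.
\end{equation*}
Now fix $(y,s)\in\overline{S_T}$ with $s\in[t^n,t^{n+1})$. The lifted function $u_h^l(y,s)=\sum_{i=1}^M u_i^n\chi_i^l(y,s)$ from \eqref{E:Approx_Sol} is a convex combination of the (at most three) nodal values $u_{i_k}^n$ corresponding to the triangle of $\mathcal{T}_h(s)$ containing $\pi_h^{-1}(y,s)$. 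For each such vertex, $|x_{i_k}(s)-y|\leq h$, so by the Lipschitz continuity of $\Phi^{-1}$
\begin{equation*}
|x_{i_k}^0-x| = |\Phi^{-1}(x_{i_k}(s),s)-\Phi^{-1}(x,0)| \leq C\bigl(h+|y-x|+s\bigr).
\end{equation*}
Using $\sum_k\chi_{i_k}^l(y,s)=1$ with nonnegative weights we deduce
\begin{equation*}
|u_h^l(y,s)-u_0(x)| \leq \max_k |u_{i_k}^n-u_0(x_{i_k}^0)|+\max_k|u_0(x_{i_k}^0)-u_0(x)| \leq 2\eta+C_4 s+\omega_0\bigl(C(h+|y-x|+s)\bigr).
\end{equation*}

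Taking the limit superior and limit inferior as $(y,s)\to(x,0)$ and $h\to0$, the last two terms vanish and we are left with $|\bar u(x,0)-u_0(x)|\leq 2\eta$ and $|\underline u(x,0)-u_0(x)|\leq 2\eta$. Since $\eta>0$ was arbitrary, we conclude $\bar u(\cdot,0)=\underline u(\cdot,0)=u_0$ on $\Gamma(0)$. The only non--routine step is the Lipschitz approximation of $u_0$, which is needed because $C_4$ in Lemma~\ref{L:Nodal_FE_Push} depends on the Lipschitz constant of the initial datum and therefore blows up if one tried to apply it directly to a merely continuous $u_0$; the monotonicity--plus--invariance sandwich decouples this blow--up constant from the $\eta$--error and makes the argument work.
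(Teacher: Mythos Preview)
Your argument is correct and uses the same toolkit as the paper (monotonicity, translation invariance, Lemma~\ref{L:Nodal_FE_Push}), but handles the lack of Lipschitz regularity of $u_0$ differently. The paper fixes a point $x_0\in\Gamma(0)$ and constructs a one--sided \emph{quadratic barrier}: from the modulus of continuity it finds $A_\delta>0$ with $u_0(x)\leq u_0(x_0)+\delta+A_\delta|x-x_0|^2$, applies the scheme to the Lipschitz function $v_0(x)=A_\delta|x-x_0|^2$, uses Lemma~\ref{L:Nodal_FE_Push} for $v_h^n$, and concludes via the interpolation estimate (Lemma~\ref{L:Interpolation}) for the push--forward $v_0^\sharp$ that $\bar u(x_0,0)\leq u_0(x_0)+\delta$; the lower bound is obtained symmetrically. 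You instead take a global Lipschitz approximation $u_0^\eta$ and run a two--sided sandwich directly on the nodal values.

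Your route is a bit more elementary and delivers both inequalities at once, at the small cost of invoking a Lipschitz approximation result on the compact surface $\Gamma(0)$ (which is indeed standard). The paper's barrier construction is more in the spirit of viscosity--solution techniques and avoids appealing to any external approximation lemma, since the barrier $A_\delta|x-x_0|^2$ is explicit. Either approach is perfectly adequate here.
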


\begin{proof}
  Fix $x_0\in\Gamma(0)$.
  By \eqref{E:Approx_Limit} it immediately follows that $\underline{u}(x_0,0)\leq\bar{u}(x_0,0)$.
  Therefore, if the inequality
  \begin{align} \label{Pf_LI:Goal}
    \bar{u}(x_0,0) \leq u_0(x_0) \leq \underline{u}(x_0,0)
  \end{align}
  holds, then we get $\bar{u}(x_0,0)=\underline{u}(x_0,0)=u_0(x_0)$.
  Let us prove \eqref{Pf_LI:Goal}. 
  Since $\Gamma(0)$ is compact in $\mathbb{R}^3$, the function $u_0\in C(\Gamma(0))$ is bounded and uniformly continuous on $\Gamma(0)$.
  Hence setting
  \begin{align*}
    \omega_0(r) := \sup\{|u_0(x)-u_0(x_0)| \mid x\in\Gamma(0),\,|x-x_0|\leq r\}, \quad r\in[0,\infty),
  \end{align*}
  we see that $\omega_0(0)=0$ and $\omega_0$ is bounded, nondecreasing, continuous at $r=0$.
  From this fact and the proof of~\cite[Lemma~2.1.9 (i)]{G06book} there exists a bounded, nondecreasing, and continuous function $\omega$ on $[0,\infty)$ satisfying $\omega(0)=0$ and $\omega_0\leq \omega$ on $[0,\infty)$.
  Fix an arbitrary $\delta>0$.
  By the above properties of $\omega$ we may take a constant $A_\delta>0$ such that $\omega(r)\leq\delta+A_\delta r^2$ for all $r\in[0,\infty)$.
  From this inequality and $|u_0(x)-u_0(x_0)|\leq\omega_0(|x-x_0|)\leq \omega(|x-x_0|)$ it follows that
  \begin{align} \label{Pf_LI:Compare_UV_Conti}
    u_0(x) \leq u_0(x_0)+\delta+A_\delta|x-x_0|^2 \quad\text{ for all } x\in\Gamma(0).
  \end{align}
  Now we construct $v_h^n=\sum_{i=1}^Mv_i^n\chi_i(\cdot,t^n)\in V_h^n$, $n=0,1,\dots,N$ by \eqref{E:Scheme_Initial}--\eqref{E:Numerical_Hamiltonian} from the initial value $v_0(x):=A_\delta|x-x_0|^2$, $x\in\Gamma(0)$.
  Then by interpolating both sides of \eqref{Pf_LI:Compare_UV_Conti} on $\Gamma_h(0)$ and observing that $u_0(x_0)+\delta$ is constant we have
  \begin{align*}
    u_h^0 \leq u_0(x_0)+\delta+v_h^0 \quad\text{on}\quad \Gamma_h(0).
  \end{align*}
Combining this inequality with Lemma~\ref{L:Monotonicity} and \eqref{E:Invariance_Constant} we obtain
  \begin{align*}
    u_h^1 = S_h^0(u_h^0) &\leq S_h^0(u_0(x_0)+\delta+v_h^0) 
    = u_0(x_0)+\delta+S_h^0(v_h^0) = u_0(x_0)+\delta+v_h^1 \quad\text{on}\quad \Gamma_h(t^1)
  \end{align*}
 and then inductively  $u_h^n\leq u_0(x_0)+\delta+v_h^n$ on $\Gamma_h(t^n)$ for $n=0,1,\dots,N$, or
  \begin{align} \label{Pf_LI:Compare_UV_0i}
     u_i^n \leq u_0(x_0)+\delta+v_i^n \leq u_0(x_0)+\delta+v_i^0+C_4t^n \quad 
  \end{align}
 for $n = 0,1,\dots,N,\, i = 1,\dots,M$, where we applied Lemma \ref{L:Nodal_FE_Push} for $v^n_h$.
  Multiplying by $\chi_i^l(\cdot,t), t \in [t^n,t^{n+1})$ and summing over $i=1,\ldots,M$ we infer with the help of 
  (\ref{Pf_NFP:Nodal_Inter_Push}) (with $t$ instead of $t^n$)
    \begin{align} \label{Pf_LI:Compare_UV_Lift}
    u_h^l(x,t) \leq u_0(x_0)+\delta+[I_h^tv_0^\sharp]^l(x)+C_4t \quad\text{for all}\quad (x,t)\in\overline{S_T}.
  \end{align}
\noindent
Since $v_0^\sharp(x_0,0)=v_0(x_0)=0$ and $v_0^\sharp$ is Lipschitz continuous on $\overline{S_T}$
we may estimate
  \begin{eqnarray*}
  \lefteqn{
    |[I_h^tv_0^\sharp]^l(x)|  \leq    |[I_h^tv_0^\sharp]^l(x) - v_0^\sharp(x,t) | + | v_0^\sharp(x,t) -
    v_0^\sharp(x_0,0) | }  \\
    & \leq & \Vert v_0^\sharp(\cdot,t) - [I_h^tv_0^\sharp]^l \Vert_{B(\Gamma(t))} + C(| x-x_0| +t) 
    \leq  C(h + | x-x_0| +t),
  \end{eqnarray*}
  where we also used Lemma \ref{L:Interpolation}. Combining this estimate with  (\ref{Pf_LI:Compare_UV_Lift})
  we infer
  \begin{displaymath}
 \bar{u}(x_0,0) =   \limsup_{\substack{h\to0 \\ \overline{S_T}\ni(x,t)\to(x_0,0)}}u_h^l(x,t)
 \leq u_0(x_0) + \delta.
 \end{displaymath}
  Since $\delta>0$ is arbitrary, it follows that $\bar{u}(x_0,0)\leq u_0(x_0)$.
  By the same argument we can show $u_0(x_0)\leq \underline{u}(x_0,0)$.
  Hence \eqref{Pf_LI:Goal} is valid and the lemma follows.
\end{proof}

\begin{lemma} \label{L:Limit_Subsol}
  Under the same assumptions as in Lemma~\ref{L:Limit_Initial}, $\bar{u}$ (resp. $\underline{u}$) is a 
  subsolution (resp. supersolution) to \eqref{E:HJ_Equation}.
\end{lemma}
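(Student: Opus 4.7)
I will follow the Barles--Souganidis framework for proving convergence of a monotone, consistent scheme to a viscosity solution. I only treat $\bar u$ (subsolution); the argument for $\underline u$ is entirely symmetric. The initial condition $\bar u(\cdot,0)\le u_0$ is already supplied by Lemma~\ref{L:Limit_Initial}. Suppose $\varphi\in C^1(\overline{S_T})$ and that $(x_0,t_0)\in\overline{S_T}$ with $t_0>0$ is a local maximum of $\bar u-\varphi$; I must verify the subsolution inequality at $(x_0,t_0)$. By an elementary regularization (add a perturbation of the form $\eta|x-x_0|^2+\eta(t-t_0)^2$ and mollify) I may assume without loss of generality that $\varphi\in C^2(\overline{S_T})$, so that Lemma~\ref{L:Consistency} is available, and that the maximum is strict on a closed neighborhood $\overline{B}\subset\overline{S_T}$ of $(x_0,t_0)$ whose temporal projection is a compact subinterval of $(0,T]$.

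For each admissible $h$, let $(i_h,n_h)$ be a maximizer of $u_i^n-\varphi(x_i^n,t^n)$ over all grid pairs $(i,n)$ with $(x_i^n,t^n)\in\overline{B}$, and set $c_h:=u_{i_h}^{n_h}-\varphi(x_{i_h}^{n_h},t^{n_h})$. A standard half-relaxed-limits argument using the strictness of the maximum together with the definition of $\bar u$ produces a subsequence along which $(x_{i_h}^{n_h},t^{n_h})\to(x_0,t_0)$ and $c_h\to(\bar u-\varphi)(x_0,t_0)$. For $h$ sufficiently small, $n_h\ge 1$ and the whole stencil of $i_h$ at time step $n_h-1$ (namely $i_h$ together with its mesh neighbors $(i_h)_1,\ldots,(i_h)_{\Upsilon_{i_h}}$ on $\Gamma_h(t^{n_h-1})$) sits inside $\overline{B}$, thanks to the regularity of the triangulation and the Lipschitz continuity of the flow map $\Phi$.

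At time step $n_h-1$ the maximizer property gives $u_j^{n_h-1}\le[I_h^{n_h-1}\varphi]_j+c_h$ for every $j$ in that stencil. Since $[S_h^{n_h-1}(\cdot)]_{i_h}$ depends only on the stencil values at $i_h$, inspection of the proof of Lemma~\ref{L:Monotonicity} yields a vertex-local version of the monotonicity statement, and combined with \eqref{E:Invariance_Constant} it delivers
\begin{equation*}
u_{i_h}^{n_h}=[S_h^{n_h-1}(u_h^{n_h-1})]_{i_h}\le[S_h^{n_h-1}(I_h^{n_h-1}\varphi)]_{i_h}+c_h.
\end{equation*}
Substituting $c_h=u_{i_h}^{n_h}-\varphi(x_{i_h}^{n_h},t^{n_h})$ and dividing by $\tau^{n_h-1}$, the $u_{i_h}^{n_h}$ terms cancel and I am left with
\begin{equation*}
\frac{\varphi(x_{i_h}^{n_h},t^{n_h})-[S_h^{n_h-1}(I_h^{n_h-1}\varphi)]_{i_h}}{\tau^{n_h-1}}\le 0.
\end{equation*}
Lemma~\ref{L:Consistency} applied at $(i_h,n_h-1)$ then implies
\begin{equation*}
\partial^\bullet\varphi(x_{i_h}^{n_h-1},t^{n_h-1})+H\bigl(x_{i_h}^{n_h-1},t^{n_h-1},\nabla_\Gamma\varphi(x_{i_h}^{n_h-1},t^{n_h-1})\bigr)\le C_3h\bigl(\|\nabla_\Gamma\varphi\|_{B(\overline{S_T})}+\|\nabla_\Gamma^2\varphi\|_{B(\overline{S_T})}+\|(\partial^\bullet)^2\varphi\|_{B(\overline{S_T})}\bigr).
\end{equation*}
Since $x_{i_h}^{n_h-1}=\Phi(x_{i_h}^0,t^{n_h-1})\to x_0$ and $t^{n_h-1}\to t_0$ as $h,\tau\to 0$ (using Lipschitz continuity of $\Phi$), the continuity of $H$, $\nabla_\Gamma\varphi$ and $\partial^\bullet\varphi$ lets me pass to the limit and conclude the required subsolution inequality.

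The main obstacles will be twofold: first, the careful reduction to a strict maximum and the verification that the discrete maximizer converges to $(x_0,t_0)$, which relies on the upper semicontinuity of $\bar u$ together with the uniform boundedness of $u_h^l$ furnished by Lemma~\ref{L:Nodal_FE_Push}; second, the observation that Lemma~\ref{L:Monotonicity} in fact holds vertex-locally, since the estimates $I_1,I_2,I_3$ in its proof depend only on the nodal differences at $i$ and its neighbors. With these two points in hand the rest of the argument is bookkeeping.
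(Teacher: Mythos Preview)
Your proof is correct and follows the same Barles--Souganidis strategy as the paper. The only difference is that the paper first passes to a \emph{global} strict maximum of $\bar u-\varphi$ on $\overline{S_T}$, then takes the discrete maximum of $u_i^n-\varphi_i^n$ over \emph{all} $(i,n)$ and applies Lemma~\ref{L:Monotonicity} as stated, thereby avoiding any check that the stencil fits inside a prescribed neighborhood; your localized version instead relies on the (correct) observation that the proof of Lemma~\ref{L:Monotonicity} is vertex-local, since the estimates for $I_1,I_2,I_3$ there depend only on the nodal differences at $i$ and its neighbors. One small quibble: Lemma~\ref{L:Nodal_FE_Push} requires Lipschitz initial data, whereas here $u_0$ is only continuous, so you cannot cite it directly for the uniform boundedness of $u_h^l$; that bound follows instead from comparing with constant initial data $\pm\|u_0\|_{B(\Gamma(0))}$ via monotonicity and \eqref{E:Invariance_Constant} (or is implicit in the estimate \eqref{Pf_LI:Compare_UV_Lift} from the proof of Lemma~\ref{L:Limit_Initial}).
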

\begin{proof} We know from Lemma \ref{L:Limit_Initial} that $\bar{u}(x,0)=\underline{u}(x,0)=u_0(x),
x \in \Gamma(0)$ so that it remains to verify \eqref{E:Ineq_Subsol}. \\
  Let us suppose first that $\varphi\in C^2(\overline{S_T})$ and that  $\bar{u}-\varphi$ takes a local maximum 
  at $(x_0,t_0)\in\overline{S_T}$ with $t_0>0$. Since $\bar{u}$ is bounded on $\overline{S_T}$ we may
  assume by a standard argument that $\bar{u} - \varphi$ has a strict global maximum at
  $(x_0,t_0)$.
  Let $\varphi_h^l$ be given by 
    \begin{align} \label{E:FE_Test}
    \varphi_h^l(x,t) := \sum_{i=1}^M\varphi_i^n\chi_i^l(x,t), \quad t\in[t^n,t^{n+1}),\,x\in\Gamma(t),
  \end{align}
  where $\varphi_i^n:=\varphi(x_i^n,t^n)$, $i=1,\dots,M$ and we include $t=t^N=T$ if $n=N-1$.
 We claim that 
  \begin{align} \label{Pf_LS:Limsup_UPhi}
    (\bar{u}-\varphi)(x,t) = \limsup_{\substack{h\to0 \\ \overline{S_T}\ni(y,s)\to(x_0,t_0)}}(u_h^l-\varphi_h^l)(y,s).
  \end{align}
  In order to see this, we note that in view of the Lipschitz continuity of $\varphi$  on $\overline{S_T}$ 
  it is sufficient to show that
  $\varphi_h^l \rightarrow \varphi$ uniformly on $\overline{S_T}$. But,
  \begin{eqnarray*}
  \Vert \varphi^l_h - \varphi \Vert_{B(\overline{S_T})}  
 &  \leq &   \sup_{t \in [0,T]} \Vert \varphi(\cdot,t) -
  [I^t_h \varphi]^l \Vert_{B(\Gamma(t))}  \\
  &&  + \max_{n=0,\ldots,N-1} \sup_{x \in \Gamma(t),t^n \leq t \leq
  t^{n+1}} \big| \sum_{i=1}^M ( \varphi(x_i(t),t) - \varphi(x^n_i,t^n)) \chi^l_i(x,t) \big|  \\
 & \leq & Ch +  \max_{n=0,\ldots,N-1} \sup_{i=1,\ldots,M, t^n \leq t \leq
  t^{n+1}} |  \varphi(x_i(t),t) - \varphi(x^n_i,t^n) | 
   \leq  C(h+ \tau^n) \leq Ch
 \end{eqnarray*}  
   by Lemma \ref{L:Interpolation}, the fact that $x_i(t)=\Phi(x^0_i,t)$, the Lipschitz continuity of
   $\varphi$ and $\Phi$ as well as \eqref{E:Epsilon_Tau}. Thus, (\ref{Pf_LS:Limsup_UPhi}) holds so that
  there exist $h_j>0$ and $(y_j,s_j)\in\overline{S_T}$, $j\in\mathbb{N}$ with $h_j\to0$, $(y_j,s_j)\to(x_0,t_0)$, and $(u_{h_j}^l-\varphi_{h_j}^l)(y_j,s_j)\to(\bar{u}-\varphi)(x_0,t_0)$ as $j\to\infty$.
  For each $j\in\mathbb{N}$, the function $u_{h_j}^l-\varphi_{h_j}^l$ is of the form
  \begin{align*}
    (u_{h_j}^l-\varphi_{h_j}^l)(x,t) = \sum_{i=1}^M(u_i^n-\varphi_i^n)\chi_i^l(x,t), \; \; 
    x\in\Gamma(t), t\in[t^n,t^{n+1}),n=0,\dots,N-1.
  \end{align*}
Let us choose  $n_j\in\{0,1,\dots,N\}$ and $i_j\in\{1,\dots,M\}$ such that
  \begin{align*}
    u_{i_j}^{n_j}-\varphi_{i_j}^{n_j}=\max\{u_i^n-\varphi_i^n \mid n=0,\dots,N,\,i=1,\dots,M\}
  \end{align*}
  and use $\chi_i(x,t)\geq0$, $i=1,\dots,M$ and $\sum_{i=1}^M\chi_i^l(x,t)=1$ to get
  \begin{equation}  \label{Pf_LS:Max_UPhi_Hj}
      (u_{h_j}^l-\varphi_{h_j}^l)(x,t) \leq (u_{i_j}^{n_j}-\varphi_{i_j}^{n_j})\sum_{i=1}^M\chi_i^l(x,t) 
     = (u_{h_j}^l-\varphi_{h_j}^l)(x_{i_j}^{n_j},t^{n_j})
  \end{equation}
  for all $(x,t)\in\overline{S_T}$.
  In particular, for all $j\in\mathbb{N}$,
  \begin{align*}
    (u_{h_j}^l-\varphi_{h_j}^l)(y_j,s_j) \leq (u_{h_j}^l-\varphi_{h_j}^l)(x_{i_j}^{n_j},t^{n_j}).
  \end{align*}
  Since $(x_{i_j}^{n_j},t^{n_j})$ belongs to the compact set $\overline{S_T}$, we may assume 
  (up to a subsequence) that there exists $(\bar{x},\bar{t})\in\overline{S_T}$ such that $(x_{i_j}^{n_j},t^{n_j})\to(\bar{x},\bar{t})$ as $j\to\infty$.
  Then by the above inequality and \eqref{Pf_LS:Limsup_UPhi} we have
  \begin{displaymath}
   (\bar{u}-\varphi)(x_0,t_0) = \lim_{j\to\infty}(u_{h_j}^l-\varphi_{h_j}^l)(y_j,s_j) 
    \leq  \limsup_{j\to\infty}(u_{h_j}^l-\varphi_{h_j}^l)(x_{i_j}^{n_j},t^{n_j}) 
    \leq (\bar{u}-\varphi)(\bar{x},\bar{t})
  \end{displaymath}
  where the last inequality follows from  the fact that $\bar u - \varphi \in USC(\overline{S_T)}$. 
  Recalling that $\bar{u}-\varphi$ takes a strict global  maximum at $(x_0,t_0)$ we infer that
  $(\bar{x},\bar{t})=(x_0,t_0)$. In particular, since $\lim_{j\to\infty}t^{n_j}=\bar{t}=t_0>0$ we have 
  for sufficiently large $j$ that $t^{n_j}>0$ i.e. $n_j\geq 1$.
  Thus we can set $(x,t)=(x_i^{n_j-1},t^{n_j-1})$ in \eqref{Pf_LS:Max_UPhi_Hj} to obtain
  \begin{align*}
    (u_{h_j}^l-\varphi_{h_j}^l)(x_i^{n_j-1},t^{n_j-1}) \leq \delta_j := u_{i_j}^{n_j}-\varphi_{i_j}^{n_j},
  \end{align*}
  or equivalently, $u_i^{n_j-1}\leq \varphi_i^{n_j-1}+\delta_j$ for $i=1,\dots,M$.
  From this we see that
  \begin{align*}
    u_{h_j}^{n_j-1} \leq I_{h_j}^{n_j-1}\varphi+\delta_j \quad\text{on}\quad \Gamma_{h_j}(t^{n_j-1}),
  \end{align*}
and   then by Lemma~\ref{L:Monotonicity} and  \eqref{E:Invariance_Constant}
  \begin{align*}
    u_{h_j}^{n_j} = S_{h_j}^{n_j-1}(u_{h_j}^{n_j-1}) \leq S_{h_j}^{n_j-1}(I_{h_j}^{n_j-1}\varphi+\delta_j) = S_{h_j}^{n_j-1}(I_{h_j}^{n_j-1}\varphi)+\delta_j \; \mbox{ on } \Gamma_{h_j}(t^{n_j}).
  \end{align*}
 Inserting  $x=x_{i_j}^{n_j}\in\Gamma_{h_j}(t^{n_j})$ into this inequality we get
  \begin{align*}
    u_{i_j}^{n_j} \leq [S_{h_j}^{n_j-1}(I_{h_j}^{n_j-1}\varphi)]_{i_j}+\delta_j = [S_{h_j}^{n_j-1}(I_{h_j}^{n_j-1}\varphi)]_{i_j}+u_{i_j}^{n_j}-\varphi_{i_j}^{n_j}
  \end{align*}
by the definition of $\delta_j$ and hence,
  \begin{align} \label{Pf_LS:Frac_Phij}
   \varphi_{i_j}^{n_j}-[S_{h_j}^{n_j-1}(I_{h_j}^{n_j-1}\varphi)]_{i_j} \leq 0.
  \end{align}
  Since $\varphi\in C^2(\overline{S_T})$, we can combine (\ref{Pf_LS:Frac_Phij})  with Lemma
  \ref{L:Consistency} to derive
  \begin{align} \label{Pf_LS:Ineq_Sub_Discrete}
    \partial^\bullet\varphi(x_{i_j}^{n_j-1},t^{n_j-1})+H(x_{i_j}^{n_j-1},t^{n_j-1},\nabla_\Gamma\varphi(x_{i_j}^{n_j-1},t^{n_j-1})) \leq C_\varphi h_j
  \end{align}
 and observing that 
  \begin{displaymath}
  | (x_{i_j}^{n_j},t^{n_j}) - (x_{i_j}^{n_j-1},t^{n_j-1}) | \leq C \tau^{n_j-1}
  \leq C h_j \rightarrow 0, \, j 
  \rightarrow \infty
  \end{displaymath}
we obtain    \eqref{E:Ineq_Subsol} by sending $j\to\infty$ in \eqref{Pf_LS:Ineq_Sub_Discrete}.

\noindent
  Finally,  let $\varphi\in C^1(\overline{S_T})$ and suppose that $\bar{u}-\varphi$ takes a local maximum at $(x_0,t_0)\in\overline{S_T}$, $t_0>0$.
  As in the first part of the proof, we may assume that $\bar{u}-\varphi$ takes a strict global  maximum at $(x_0,t_0)$. 
  Let us approximate $\varphi$ by a sequence $(\varphi_{\delta}) \subset C^2(\overline{S_T)}$
  such that $\varphi_{\delta} \rightarrow \varphi$ in $C^1(\overline{S_T})$ as $\delta \rightarrow 0$.
For a suitable subsequence  there exist $(x_\delta,t_\delta)\in\overline{S_T}$ such that 
$ (x_\delta,t_\delta) \rightarrow (x_0,t_0)$ and 
 $\bar{u}-\varphi_\delta$ takes a global maximum at  $(x_\delta,t_\delta)$.
 In particular, $t_\delta>0$ for sufficiently small $\delta>0$.
It follows from the first part of the proof that
  \begin{align*}
    \partial^\bullet\varphi_\delta(x_\delta,t_\delta)+H(x_\delta,t_\delta,\nabla_\Gamma\varphi_\delta(x_\delta,t_\delta)) \leq 0.
  \end{align*}
  Letting $\delta\to0$ in the above inequality 
we see that $\varphi$ satisfies \eqref{E:Ineq_Subsol} at $(x_0,t_0)$, so that 
 $\bar{u}$ is a subsolution to \eqref{E:HJ_Equation}. In the same way one shows that 
 $\underline{u}$ is a supersolution.
\end{proof}

\noindent
Finally, let us prove the existence of a viscosity solution to \eqref{E:HJ_Equation}.

\begin{theorem} \label{T:Viscosity_Sol}
Suppose that $u_0 \in C(\Gamma(0))$. Then
 there exists a unique viscosity solution to \eqref{E:HJ_Equation}.
\end{theorem}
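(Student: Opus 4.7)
The plan is to obtain uniqueness immediately from the comparison principle and to construct existence by identifying the upper and lower half-relaxed limits of the approximate scheme.

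For uniqueness, suppose $u$ and $v$ are both viscosity solutions with the same initial datum $u_0 \in C(\Gamma(0))$. By Definition~\ref{D:Def_Viscosity_Solutions}, $u$ is a subsolution and $v$ is a supersolution (and vice versa), with $u(\cdot,0) = v(\cdot,0) = u_0$ on $\Gamma(0)$. Applying Theorem~\ref{L:Comparison_Principle} in both directions yields $u \le v$ and $v \le u$ on $\overline{S_T}$, hence $u = v$.

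For existence, I would run the finite volume scheme \eqref{E:Scheme_Initial}--\eqref{E:Numerical_Hamiltonian} with parameters $\varepsilon^n_i$ and $\tau^n$ chosen to satisfy \eqref{E:Epsilon_Tau} (for instance, $\varepsilon^n_i = C_0 \max_j h_{T^{n,i}_j}$ and $\tau^n$ proportional to $h$). This produces, for each $h$ sufficiently small, the approximate solution $u_h^l : \overline{S_T} \to \mathbb{R}$ defined in \eqref{E:Approx_Sol}. Form the half-relaxed limits $\bar{u}$ and $\underline{u}$ as in \eqref{E:Approx_Limit}. By construction $\underline{u} \le \bar{u}$ on $\overline{S_T}$, and by \cite[Section V.2.1, Proposition~2.1]{BC97book} we have $\bar u \in USC(\overline{S_T})$ and $\underline u \in LSC(\overline{S_T})$. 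Lemma~\ref{L:Limit_Initial} gives $\bar u(\cdot,0) = \underline u(\cdot,0) = u_0$ on $\Gamma(0)$, and Lemma~\ref{L:Limit_Subsol} shows that $\bar u$ is a subsolution and $\underline u$ is a supersolution of \eqref{E:HJ_Equation}.

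Now invoke the comparison principle once more: since $\bar u$ is a subsolution and $\underline u$ is a supersolution with $\bar u(\cdot,0) = u_0 = \underline u(\cdot,0)$, Theorem~\ref{L:Comparison_Principle} gives $\bar u \le \underline u$ on $\overline{S_T}$. Combined with the trivial inequality $\underline u \le \bar u$, this yields $\bar u = \underline u$ on $\overline{S_T}$. Denote this common function by $u$; it is continuous on $\overline{S_T}$ (being both upper and lower semicontinuous), satisfies $u(\cdot,0) = u_0$, and is simultaneously a subsolution and a supersolution. Hence $u$ is a viscosity solution to \eqref{E:HJ_Equation}, completing the existence proof.

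The conceptual heart of the argument has already been carried out in Lemmas~\ref{L:Limit_Initial} and~\ref{L:Limit_Subsol}; the remaining step is a clean application of comparison. The only subtlety worth checking is that the scheme can indeed be executed for a sequence $h \to 0$ with matching $\tau^n$ and $\varepsilon^n_i$ obeying \eqref{E:Epsilon_Tau}, which is immediate from the regularity assumption \eqref{E:Quotient_Diam_Local} on the triangulations.
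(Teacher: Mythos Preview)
Your proof is correct and follows essentially the same route as the paper: uniqueness via the comparison principle (the paper packages this as Corollary~\ref{C:Uniqueness_Solution}), and existence by showing that the half-relaxed limits $\bar u$ and $\underline u$ of the scheme coincide, using Lemmas~\ref{L:Limit_Initial} and~\ref{L:Limit_Subsol} together with Theorem~\ref{L:Comparison_Principle}. The only difference is cosmetic---you spell out the uniqueness argument and the feasibility of the parameter choices~\eqref{E:Epsilon_Tau} explicitly, whereas the paper simply cites the earlier corollary and takes the scheme as given.
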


\begin{proof}
  The uniqueness of a viscosity solution was already shown in Corollary~\ref{C:Uniqueness_Solution}.
  Let us prove the existence.
  Since $u_0\in C(\Gamma(0))$, Lemmas~\ref{L:Limit_Initial} and~\ref{L:Limit_Subsol} imply that $\bar{u}$ and $\underline{u}$ constructed by \eqref{E:Approx_Sol}--\eqref{E:Approx_Limit} are a subsolution and supersolution to \eqref{E:HJ_Equation}, respectively, and satisfy $\bar{u}(\cdot,0)=\underline{u}(\cdot,0)=u_0$ on $\Gamma(0)$.
  Hence we can apply the comparison principle (see Theorem~\ref{L:Comparison_Principle}) to the subsolution $\bar{u}$ and the supersolution $\underline{u}$ to get $\bar{u}\leq\underline{u}$ on $\overline{S_T}$.
  Moreover, by \eqref{E:Approx_Limit} we easily see that $\underline{u}\leq\bar{u}$ on $\overline{S_T}$.
  Therefore, $u:=\bar{u}=\underline{u}$ is a viscosity solution to \eqref{E:HJ_Equation}.
\end{proof}

\section{Error bound} \label{T:Error_Bound}

\noindent
In this section we prove an error estimate between the viscosity solution to \eqref{E:HJ_Equation} and the numerical solution given by the scheme \eqref{E:Scheme_Initial}--\eqref{E:Numerical_Hamiltonian}.

\begin{theorem} \label{T:Error_Estimate}
Suppose that the viscosity solution $u$ of \eqref{E:HJ_Equation}  is Lipschitz continuous on 
$\overline{S_T}$ in the sense that
  \begin{align} \label{E:Lipschitz_Visc_Sol}
    |u(x,t)-u(y,s)| \leq L_U(|x-y|+|t-s|)
  \end{align}
  for all $(x,t),(y,s)\in\overline{S_T}$, where $L_U>0$ is a constant independent of $(x,t)$ and $(y,s)$.
  Assume further that  \eqref{E:Epsilon_Tau} is  satisfied and denote 
  by $u_h^n=\sum_{i=1}^Mu_i^n\chi_i(\cdot,t^n)\in V_h^n$  the finite element function constructed from $u_0$ using \eqref{E:Scheme_Initial}--\eqref{E:Numerical_Hamiltonian}.
Then there exist $h_0>0$ and  a constant $C>0$ independent of $h$ such that
  \begin{align} \label{E:Error_Discrete}
    \max_{1\leq i\leq M,\,0\leq n\leq N}|u(x_i^n,t^n)-u_i^n| \leq Ch^{1/2} \quad\text{for all}\quad h\in(0,h_0).
  \end{align}
\end{theorem}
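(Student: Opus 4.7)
The plan is to adapt the Crandall--Lions doubling-of-variables technique to the evolving surface setting, proceeding in close analogy with the proof of the comparison principle (Theorem~\ref{L:Comparison_Principle}) but with one of the two roles played by the numerical scheme rather than by a continuous supersolution. First I would extend the discrete solution to a function $U_h\colon \overline{S_T}\to\mathbb{R}$ by the formula \eqref{E:Approx_Sol}. Because $U_h(x_i^n,t^n)=u_i^n$ thanks to the nodal basis property, it suffices to establish $\|u-U_h\|_{B(\overline{S_T})}\le Ch^{1/2}$, which in turn reduces to bounding $\sup(u-U_h)$ and $\sup(U_h-u)$ separately.

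For the first direction I would assume by contradiction that $\sup(u-U_h) > Kh^{1/2}$ for a sufficiently large $K$, and introduce the penalised function
\[
  \Psi_\alpha(x,t,y,s) := u(x,t) - U_h(y,s) - \frac{|x-y|^2+(t-s)^2}{\alpha^2} - \eta t,
\]
attaining its maximum on $\overline{S_T}\times\overline{S_T}$ at some $(\bar x,\bar t,\bar y,\bar s)$. The estimates from Theorem~\ref{L:Comparison_Principle} give $\alpha^{-2}(|\bar x-\bar y|^2+|\bar t-\bar s|^2)\to 0$ and $\bar t,\bar s>0$ for small $\alpha$. Applying the subsolution property of $u$ with the smooth test function $\varphi^1(x,t)=U_h(\bar y,\bar s)+\alpha^{-2}(|x-\bar y|^2+(t-\bar s)^2)+\eta t$ gives, exactly as in \eqref{Pf_CP:Subsol_Test},
\[
  \eta + \frac{2(\bar t-\bar s)}{\alpha^2} + \frac{2}{\alpha^2}v_\Gamma(\bar x,\bar t)\cdot(\bar x-\bar y) + H\bigl(\bar x,\bar t,\tfrac{2}{\alpha^2}P_\Gamma(\bar x,\bar t)(\bar x-\bar y)\bigr) \le 0.
\]

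The core technical step is to produce a matching reverse inequality coming from the discrete scheme at a mesh vertex near $(\bar y,\bar s)$. Pick $m$ with $\bar s\in[t^m,t^{m+1})$ and a vertex $x_j^m$ nearest to $\bar y$, so that $|\bar y-x_j^m|\le Ch$ and $|\bar s-t^m|\le \tau^m\le Ch$ by \eqref{E:Epsilon_Tau}. Since $\Psi_\alpha$ is maximal at $(\bar y,\bar s)$ in the second pair of variables, the function $U_h-\varphi^2$ with $\varphi^2(y,s):=-\alpha^{-2}(|\bar x-y|^2+(\bar t-s)^2)$ attains a minimum on $\overline{S_T}$ at $(\bar y,\bar s)$. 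Writing this out at the nodes $x_i^m$ yields $u_i^m\ge \varphi^2(x_i^m,t^m)+c_0$ for a common constant $c_0$, i.e.\ $I_h^m\varphi^2+c_0\le u_h^m$ on $\Gamma_h(t^m)$. Invoking monotonicity (Lemma~\ref{L:Monotonicity}) and constant-invariance \eqref{E:Invariance_Constant}, one step of the scheme produces
\[
  u_j^{m+1} = [S_h^m(u_h^m)]_j \ge [S_h^m(I_h^m\varphi^2)]_j+c_0,
\]
and combining with the minimum property at $(\bar y,\bar s)$ (transferred to the node $x_j^{m+1}$ at an additive cost $O(h^2/\alpha^2+\tau^2/\alpha^2)$ from Taylor expansion of $\varphi^2$) gives $\varphi^2(x_j^{m+1},t^{m+1})-[S_h^m(I_h^m\varphi^2)]_j\ge -Ch^2/\alpha^2$. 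The consistency bound (Lemma~\ref{L:Consistency}) then translates this into
\[
  \partial^\bullet\varphi^2(x_j^m,t^m) + H\bigl(x_j^m,t^m,\nabla_\Gamma\varphi^2(x_j^m,t^m)\bigr) \ge -C\Bigl(\frac{h}{\alpha^2}+h\bigl(\|\nabla_\Gamma\varphi^2\|+\|\nabla_\Gamma^2\varphi^2\|+\|(\partial^\bullet)^2\varphi^2\|\bigr)\Bigr),
\]
where the derivative norms of $\varphi^2$ scale like $\alpha^{-2}$.

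Subtracting this from the subsolution inequality and using \eqref{E:Hamiltonian_Lip_xt}--\eqref{E:Hamiltonian_Lip_p} together with the smoothness of $v_\Gamma$ and $P_\Gamma$ (exactly the estimates controlling $\bar A$ in Theorem~\ref{L:Comparison_Principle}) produces a bound of the schematic form
\[
  \eta \le C\,\frac{|\bar x-\bar y|^2+|\bar t-\bar s|^2}{\alpha^2} + C\alpha^2 + C\frac{h}{\alpha^2}.
\]
The first term vanishes with $\alpha$, and the optimal choice $\alpha^2=\sqrt{h}$ makes the remaining two terms both of size $\sqrt{h}$, contradicting $\sup(u-U_h)>Kh^{1/2}$ if $K$ is large enough relative to the implicit constants. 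The symmetric bound $\sup(U_h-u)\le Ch^{1/2}$ follows by reversing the roles and using that $u$ is also a supersolution. The main obstacle will be the passage from the continuous extremum at $(\bar y,\bar s)$ to a legitimate discrete inequality at a mesh node $x_j^m$: because $U_h$ is only piecewise constant in time and the derivatives of the penalty blow up like $\alpha^{-2}$, one must carefully track the loss incurred in replacing $(\bar y,\bar s)$ with $(x_j^m,t^m)$ and show that this loss is exactly what the $h/\alpha^2$--$\alpha^2$ balance absorbs, thereby recovering the classical $O(\sqrt{h})$ Crandall--Lions rate on the evolving hypersurface.
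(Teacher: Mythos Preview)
Your overall strategy---doubling of variables, using the viscosity subsolution inequality for $u$ on one side and monotonicity plus consistency of the scheme on the other---is the right one and is exactly what the paper does. The difference, and the place where your argument breaks, is in how the second pair of variables is handled.

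You double with two \emph{continuous} pairs $(x,t),(y,s)\in\overline{S_T}$ and then try to transfer the extremum in $(y,s)$ to a nearby node $(x_j^{m+1},t^{m+1})$. The step ``combining with the minimum property at $(\bar y,\bar s)$ \ldots gives $\varphi^2(x_j^{m+1},t^{m+1})-[S_h^m(I_h^m\varphi^2)]_j\ge -Ch^2/\alpha^2$'' is not justified. To obtain it from $u_j^{m+1}\ge[S_h^m(I_h^m\varphi^2)]_j+c_0$ you would need the \emph{upper} bound $u_j^{m+1}-\varphi^2(x_j^{m+1},t^{m+1})\le c_0+O(h^2/\alpha^2)$, but the global minimum $c_0$ of $U_h-\varphi^2$ is attained at $(\bar y,\bar s)$ with $\bar s\in[t^m,t^{m+1})$, hence $U_h(\bar y,\bar s)$ is built from the values $u_i^m$, not $u_i^{m+1}$. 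A Taylor expansion of $\varphi^2$ alone cannot control $u_j^{m+1}$; you would need second-order information about $U_h$ at the minimum, and $U_h$ is only piecewise linear in space and piecewise constant in time.

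The paper avoids this entirely by taking the second pair of variables to be \emph{discrete} from the outset: one maximises
\[
\Psi(x,t,i,n)=u(x,t)-\rho\sqrt{h}\,t-u_i^n-\frac{|x-x_i^n|^2+|t-t^n|^2}{\sqrt{h}}
\]
over $(x,t)\in\overline{S_T}$ and $(i,n)$ ranging over nodes and time levels, with the penalty parameter fixed to $\sqrt{h}$. At the maximiser $(x_0,t_0,i_0,n_0)$ the comparison $\Psi(x_0,t_0,i,n_0-1)\le\Psi(x_0,t_0,i_0,n_0)$ for every $i$ immediately yields $I_h^{n_0-1}\varphi\le u_h^{n_0-1}+\mathrm{const}$, and after monotonicity and evaluation at $i_0$ one gets $[S_h^{n_0-1}(I_h^{n_0-1}\varphi)]_{i_0}\le\varphi_{i_0}^{n_0}$ with no transfer error at all. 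The Lipschitz continuity of $u$ (not the asymptotic lemma from Theorem~\ref{L:Comparison_Principle}, which requires sending $\alpha\to0$ independently of $h$) gives directly $|x_0-x_{i_0}^{n_0}|+|t_0-t^{n_0}|\le C\sqrt{h}$, which is what feeds into the final estimate.

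Two further points you should address: the cases $t_0=0$ or $n_0=0$ must be treated separately (you cannot invoke the subsolution property or step back in the scheme), and here one uses the Lipschitz bound \eqref{E:Lipschitz_Visc_Sol} together with Lemma~\ref{L:Nodal_FE_Push}; and your appeal to ``$\bar t,\bar s>0$ for small $\alpha$'' from the comparison principle is not available once $\alpha$ is tied to $h$.
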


\begin{proof} 
  The argument is similar to that in the proof of the comparison principle (see Theorem~\ref{L:Comparison_Principle}). Let us define
  \begin{align} \label{Pf_EE:Def_Phi}
    \Psi(x,t,i,n) := u(x,t) - \rho \sqrt{h}  \, t -u_i^n-\frac{|x-x_i^n|^2+|t-t^n|^2}{\sqrt{h}}
  \end{align}
  for $(x,t)\in\overline{S_T}, \, i \in \lbrace 1,\dots,M \rbrace$ and $n \in \lbrace 0,1,\dots,N \rbrace$. Here, the constant $\rho>0$ is subject
  to $\rho \sqrt{h} \leq 1$ and will be chosen later.
 Clearly, 
 \begin{eqnarray}
 \lefteqn{  \hspace{-1.5cm}
 \max_{1\leq i\leq M,\,0\leq n\leq N} (u(x_i^n,t^n)-u_i^n) =  \max_{1\leq i\leq M,\,0\leq n\leq N}
[ \Psi(x^n_i,t^n,i,n)+ \rho \sqrt{h} \, t^n ] } \nonumber \\
& \leq & \max_{(x,t)\in\overline{S_T},\,i=1,\dots,M,\,n=0,\dots,N} \Psi(x,t,i,n) + \rho \sqrt{h} \, T 
 =   \Psi(x_0,t_0,i_0,n_0) + \rho \sqrt{h} \, T
\label{maxbound}
\end{eqnarray}
for some $(x_0,t_0)\in\overline{S_T}$, $i_0\in\{1,\dots,M\}$ and $n_0\in\{0,1,\dots,N\}$.
 In particular,  we have $\Psi(x_{i_0}^{n_0},t^{n_0},i_0,n_0)\leq\Psi(x_0,t_0,i_0,n_0)$, i.e.
  \begin{displaymath}
    u(x_{i_0}^{n_0},t^{n_0})- \rho \sqrt{h} \, t^{n_0} -u_{i_0}^{n_0}  
    \leq  u(x_0,t_0)- \rho \sqrt{h} \, t_0 -u_{i_0}^{n_0}    -\frac{|x_0-x_{i_0}^{n_0}|^2+|t_0-t^{n_0}|^2}{\sqrt{h}}.
  \end{displaymath}
  From this, \eqref{E:Lipschitz_Visc_Sol}, and the fact that $\rho \sqrt{h} \leq 1$  it follows that
  \begin{eqnarray*}
  \lefteqn{ \hspace{-1cm}
    \frac{|x_0-x_{i_0}^{n_0}|^2+|t_0-t^{n_0}|^2}{\sqrt{h}} \leq u(x_0,t_0)-u(x_{i_0}^{n_0},t^{n_0}) +
    \rho \sqrt{h} (t^{n_0}-t_0) } \\
    &\leq &  L_U(|x_0-x_{i_0}^{n_0}|+|t_0- t^{n_0}|)+|t_0-t^{n_0}| 
    \leq C (|x_0-x_{i_0}^{n_0}|^2+|t_0-t^{n_0}|^2)^{1/2}
  \end{eqnarray*} 
  and hence 
 \begin{align} \label{Pf_EE:Const_1}
      \frac{(|x_0-x_{i_0}^{n_0}|^2+|t_0-t^{n_0}|^2)^{1/2}}{\sqrt{h}} \leq C.
  \end{align}  
Now let us consider several possible cases.  \\
   \textbf{Case 1:} $t_0>0$ and $n_0\geq1$. By exploiting the fact that $u$ is a subsolution we 
   obtain as in   (\ref{Pf_CP:Subsol_Test}) 
    \begin{equation} \label{Pf_EE:Case1_Ineq_Sub}
   \frac{2}{\sqrt{h}}(t_0-t^{n_0})+\frac{2}{\sqrt{h}}v_\Gamma(x_0,t_0)\cdot(x_0-x_{i_0}^{n_0}) 
    +H \bigl( x_0,t_0,\frac{2}{\sqrt{h}}P_\Gamma(x_0,t_0)(x_0-x_{i_0}^{n_0}) \bigr) \leq - \rho \sqrt{h}.
  \end{equation} 
 On the other hand, since $\Psi(x_0,t_0,i,n_0-1) \leq \Psi(x_0,t_0,i_0,n_0), i=1,\ldots,M$ we infer 
  \begin{displaymath}
  \varphi^{n_0-1}_i - u^{n_0-1}_i \leq \varphi^{n_0}_{i_0} - u^{n_0}_{i_0}, \quad i=1,\ldots,M,
  \end{displaymath}
  where
  \begin{displaymath}
  \varphi^n_i=\varphi(x^n_i,t^n) \quad \mbox{ and } \quad \varphi(x,t)=- \frac{| x_0-x|^2 + (t_0-t)^2}{
  \sqrt{h}}.
  \end{displaymath}
  Hence,  $I_h^{n_0-1}\varphi \leq u_h^{n_0-1}+ \varphi^{n_0}_{i_0} - u^{n_0}_{i_0}$ on $\Gamma_h(t^{n_0-1})$
  so that we deduce with the help of Lemma~\ref{L:Monotonicity}, \eqref{E:Invariance_Constant}
  and the definition of the scheme
  \begin{displaymath}
  S_h^{n_0-1} (I_h^{n_0-1}\varphi)  \leq S_h^{n_0-1} (u^{n_0-1}_h) +  \varphi^{n_0}_{i_0} - u^{n_0}_{i_0}
  = u^{n_0}_h +  \varphi^{n_0}_{i_0} - u^{n_0}_{i_0}.
  \end{displaymath}
  Evaluting the above inequality for $x=x^{n_0}_{i_0}$ we find that
  \begin{displaymath}
  [ S_h^{n_0-1} (I_h^{n_0-1}\varphi)]_{i_0} \leq \varphi^{n_0}_{i_0},
  \end{displaymath}
 from which we infer that 
 \begin{equation} \label{supervarphi}
  -\partial^{\bullet} \varphi(x^{n_0}_{i_0},t^{n_0}) - H \bigl( x^{n_0}_{i_0},t^{n_0},
 \nabla_{\Gamma}  \varphi(x^{n_0}_{i_0},t^{n_0}) \bigr) \leq A+B,
 \end{equation}
 where
 \begin{eqnarray*}
 A& = &- \partial^{\bullet} \varphi(x^{n_0-1}_{i_0},t^{n_0-1}) - H \bigl( x^{n_0-1}_{i_0},t^{n_0-1},
 \nabla_{\Gamma}  \varphi(x^{n_0-1}_{i_0},t^{n_0-1}) \bigr)  \\
 &&  + \frac{\varphi^{n_0}_{i_0} - [ S_h^{n_0-1} (I_h^{n_0-1}\varphi)]_{i_0}}{\tau^{n_0-1}},  \\
B & = &  [\partial^{\bullet} \varphi(x^{n_0-1}_{i_0},t^{n_0-1}) - \partial^{\bullet} \varphi(x^{n_0}_{i_0},t^{n_0})] \\
&& +[ H \bigl( x^{n_0-1}_{i_0},t^{n_0-1},
 \nabla_{\Gamma}  \varphi(x^{n_0-1}_{i_0},t^{n_0-1}) \bigr)-H \bigl( x^{n_0}_{i_0},t^{n_0},
 \nabla_{\Gamma}  \varphi(x^{n_0}_{i_0},t^{n_0}) \bigr)].
 \end{eqnarray*}
 We deduce  from Lemma \ref{L:Consistency} that
 \begin{equation}  \label{Aest}
 | A | \leq C_3h\left(\|\nabla_\Gamma\varphi \|_{B(\overline{S_T})}+\|\nabla_\Gamma^2
 \varphi \|_{B(\overline{S_T})}+\| (\partial^\bullet )^2 \varphi\|_{B(\overline{S_T})}\right) \leq C \sqrt{h}
 \end{equation}
 since
 \begin{eqnarray}
 \partial^\bullet \varphi(x,t) &= & -\frac{2}{\sqrt{h}}(t-t_0)- \frac{2}{\sqrt{h}} v_{\Gamma}(x,t) \cdot (x-x_0),
  \label{dervarphi1} \\
  \nabla_{\Gamma} \varphi(x,t) & = & - \frac{2}{\sqrt{h}} P_{\Gamma}(x,t) (x-x_0). \label{dervarphi2}
 \end{eqnarray}
 Using (\ref{dervarphi1}), (\ref{dervarphi2}), (\ref{E:Hamiltonian_Lip_xt}), (\ref{E:Hamiltonian_Lip_p}) and the Lipschitz continuity of $v_{\Gamma}$
 we further obtain
 \begin{eqnarray} 
 | B | & \leq & \Bigl( \frac{C}{\sqrt{h}} + L_{H,1} \bigl(1 + \Vert \nabla_{\Gamma} \varphi \Vert_{B(\overline{S_T})}  
 \bigr) \Bigr) \bigl( | x^{n_0}_{i_0} - x^{n_0-1}_{i_0} | + | t^{n_0} - t^{n_0-1} | \bigr) \nonumber   \\
 & & + L_{H,2} | \nabla_{\Gamma} \varphi( x^{n_0}_{i_0},t^{n_0}) - \nabla_{\Gamma}  \varphi(x^{n_0-1}_{i_0},t^{n_0-1}) |  \nonumber \\
& \leq & \frac{C}{\sqrt{h}} \, \tau^{n_0-1} \leq    C \sqrt{h},   \label{Best}
 \end{eqnarray}
where we used \eqref{E:Epsilon_Tau} for the last inequality.
If we insert (\ref{Aest}) and (\ref{Best}) into  (\ref{supervarphi}) and use again (\ref{dervarphi1}), (\ref{dervarphi2})
we obtain
 \begin{equation} \label{Pf_EE:Case1_Ineq_Super}
   -\frac{2}{\sqrt{h}}(t_0-t^{n_0})-\frac{2}{\sqrt{h}}v_\Gamma(x^{n_0}_{i_0},t^{n_0})\cdot(x_0-x^{n_0}_{i_0}) 
    -H\bigl( x^{n_0}_{i_0},t^{n_0},\frac{2}{\sqrt{h}}P_\Gamma(x^{n_0}_{i_0},t^{n_0})(x_0-x^{n_0}_{i_0})\bigr) \leq C
    \sqrt{h}.
  \end{equation}
   We sum up both sides of \eqref{Pf_EE:Case1_Ineq_Sub} and \eqref{Pf_EE:Case1_Ineq_Super} and employ the Lipschitz continuity of $v_{\Gamma}$ as well as (\ref{E:Hamiltonian_Lip_xt}), (\ref{E:Hamiltonian_Lip_p}) to get
  \begin{eqnarray*}
  \lefteqn{  \rho \sqrt{h}  \leq C \sqrt{h} +
   \frac{2}{\sqrt{h}}\{v_\Gamma(x^{n_0}_{i_0},t^{n_0})-v_\Gamma(x_0,t_0)\}\cdot(x_0-x^{n_0}_{i_0}) } \\
  && +H\bigl( x^{n_0}_{i_0},t^{n_0},\frac{2}{\sqrt{h}}P_\Gamma(x^{n_0}_{i_0},t^{n_0})(x_0-x^{n_0}_{i_0})\bigr)
 - H\bigl( x_0,t_0,\frac{2}{\sqrt{h}}P_\Gamma(x_0,t_0)(x_0-x^{n_0}_{i_0})\bigr) \\
 & \leq & C \sqrt{h} +  \frac{ C(|x_0-x^{n_0}_{i_0}|+|t_0-t^{n_0}|)|x_0-x^{n_0}_{i_0}|}{\sqrt{h}}  \\
 & & + L_{H,1}(|x_0-x^{n_0}_{i_0}|+|t_0-t^{n_0}|)\left(1+\frac{2}{\sqrt{h}}|P_\Gamma(x_0,t_0)(x_0-x^{n_0}_{i_0})|\right) \\
   && +\frac{2L_{H,2}}{\sqrt{h}}|P_\Gamma(x_0,t_0)-P_\Gamma(x^{n_0}_{i_0},t^{n_0})||x_0-x^{n_0}_{i_0}| \\
   & \leq & C \sqrt{h} + C \frac{|x_0-x^{n_0}_{i_0}|^2+|t_0-t^{n_0}|^2}{\sqrt{h}} + C \bigl(
   |x_0-x^{n_0}_{i_0}|+|t_0-t^{n_0}| \bigr) \\
   & \leq & C \sqrt{h}
  \end{eqnarray*}
  in view of (\ref{Pf_EE:Const_1}).
 Choosing $\rho>C$ we obtain a contradiction so that this case cannot occur. \\
 \noindent 
  \textbf{Case 2:} $t_0=0$ and $n_0\geq0$. Since $u(x_0,t_0)=u(x_0,0)=u_0(x_0)$ we obtain with the
  help of (\ref{E:Lipschitz_Visc_Sol}), Lemma \ref{L:Nodal_FE_Push} and  (\ref{Pf_EE:Const_1})
  \begin{eqnarray}
  \Psi(x_0,t_0,i_0,n_0) &= & \Psi(x_0,0,i_0,n_0) \leq u(x_0,0) -u^{n_0}_{i_0} = u_0(x_0)-u_0(x^0_{i_0}) +
  u^0_{i_0}-u^{n_0}_{i_0}   \nonumber \\
  & \leq & L_U | x_0 - x^0_{i_0} | +C_4 t^{n_0} \leq C \bigl( | x_0 -x^{n_0}_{i_0} | +
  | x^{n_0}_{i_0} - x^0_{i_0} | \bigr)  + C_4 t^{n_0} \nonumber \\
  & \leq & C \bigl( | x_0 -x^{n_0}_{i_0} | + | t_0 - t^{n_0} | \bigr) \leq  C \sqrt{h}. \label{case2}
  \end{eqnarray}
  \textbf{Case 3:} $t_0\geq0$ and  $n_0=0$. Using once more (\ref{E:Lipschitz_Visc_Sol}) and 
  (\ref{Pf_EE:Const_1}) we derive
   \begin{eqnarray}
  \Psi(x_0,t_0,i_0,n_0) & = & \Psi(x_0,t_0,i_0,0) \leq u(x_0,t_0) -u^{0}_{i_0} = u(x_0,t_0) - u(x^0_{i_0},0)  \nonumber \\
  & \leq & L_U \bigl( | x_0 - x^0_{i_0} | + t_0 \bigr) = L_U  \bigl( | x_0 -x^{n_0}_{i_0} | + | t_0 - t^{n_0} | \bigr)
   \leq  C \sqrt{h}. \label{case3}
  \end{eqnarray}
In conclusion we infer that from (\ref{maxbound}), (\ref{case2}), (\ref{case3}) and the fact that
Case 1 cannot occur that
\begin{displaymath}
 \max_{1\leq i\leq M,\,0\leq n\leq N} (u(x_i^n,t^n)-u_i^n) \leq C \sqrt{h}.
\end{displaymath}
In an analogous way we bound $ \max_{1\leq i\leq M,\,0\leq n\leq N} (u^n_i-u(x_i^n,t^n))$ which
completes the proof of the theorem.
\end{proof}

  \section{Numerics} \label{S:Numerics}
In this section we present some numerical results.
In order to implement the scheme it is necessary to triangulate the initial surface and then evolve the vertices using the surface  material velocity.  Vertex evolution  would typically be done  by time stepping with a sufficiently accurate  ordinary differential equation solver using the known material velocity.   The scheme has been designed to allow non-acute triangulations which may be the consequence of an evolution from an initially acute triangulation. Note that for coupled systems the evolution of the surface may depend on the solution of the surface PDE.  Also it may be of interest to solve equations on unstructured evolving  triangulations arising from the data analysis of experimental  observations, c.f. \cite {BreDuEll16}.  At each time step we allow a variable  $\varepsilon_i^n$ and a variable $\tau^n$. Note that the scheme is also implementable with these parameters being constant and still satisfying the constraints  (\ref{E:Epsilon_Tau}) provided one has good estimates of the requisite mesh sizes. The discrete Hamiltonian  (\ref{E:Numerical_Hamiltonian}) requires mesh computations  at each vertex using elementary trigonometric formulae so the mesh parameters are readily available.  In the simulations we present 
the surfaces are sufficiently simple that the vertices of the evolving triangulations are known exactly. 


\begin{example}
To begin we consider model problems for which we have explicit solutions. To achieve this we  consider  an expanding 
sphere $\Gamma(t)$  with $\Gamma(0)=S^1$ and velocity $v_\Gamma=x/|x|$. 
It follows that the flow map, (\ref{E:Flow_Map}),  is given by $\Phi(X,t):=(1+t)X$ so that the radius of the sphere is $R(t)=1+t$ and the  positions of vertices are easily calculated by formula. 

Note that for a given function $g(x,t), x\in \mathbb R^3, t\ge 0$ on $\Gamma(t)$ 
$$
|\nabla_{\Gamma} g|^2 = |\nabla g|^2 - \frac{(\nabla g\cdot x)^2}{R^2}\mbox~~~\mbox{and}~~~\partial^\bullet g=g_t +\frac{x\cdot\nabla g}{R}.
$$
Using this $g$ we set
$$
H(x,t,p)=  \Bigl (- | p |+  \bigl(|\nabla g(x,t)|^2 - \frac{(\nabla g(x,t)\cdot x)^2}{R(t)^2}\bigr)^{1/2} \Bigr)  -
\bigl(g_t(x,t) +\frac{x \cdot \nabla g(x,t)}{R(t)} \bigr).
$$ 

It follows that $u(x,t):=g(x,t), t \ge 0, x\in \Gamma(t)$ solves (\ref{E:HJ_Equation}).  

We present two examples, in the first we set $g=e^{-0.5t}x_1x_2x_3$ and in the second we set $g=10\sin(t)+x_1x_2x_3t$. For each example we use two initial triangulations, one with a non-acute mesh and one with an acute one, the associated triangulations at $t=0.5$ are displayed in Figure \ref{f:init_tri}.

We investigate the experimental order of convergence, EOC, which is the ratio of errors for successive reduction of the largest triangle edge, $h$, of the initial triangulations. The time step is chosen to be  $\tau^n=0.005\min_{i,j}|E_j^{n,i}|$. 
In the results we display the values of 
$$\mathcal{E} = \max_{1\leq i\leq M,\,0\leq n\leq N} |u^n_i-u(x_i^n,t^n)|,$$ 
together with the corresponding EOCs for the time interval  $t\in (0,0.5)$. 

The EOCs for $u=e^{-0.5t}x_1x_2x_3$, with $\varepsilon_i^n=C_1\max_j h_{T_j^{n,i}}$, for $C_1=0.5,0.2,0.1$, are displayed in Tables 1 and 2, with Table 1 corresponding to the non-acute triangulation and Table 2 corresponding to the acute triangulation. From Tables 1 and 2, for $C_1=0.5$, we see convergence of the solution, with in the case of the acute triangulation, an EOC that is approaching $1$. However once $C_1$ is reduced to $0.1$ the convergence is lost for the non-acute triangulation and the EOCs are much reduced for the acute triangulation.

We see similar behaviour for  the convergence of the solution in Tables 3 and 4 where the corresponding results for $u=10\sin(t)+x_1x_2x_3t$ are displayed, again with $\varepsilon_i^n=C_1\max_j h_{T_j^{n,i}}$, for $C_1=0.5,0.2,0.1$.



\begin{figure}[h]
\centering
\includegraphics[scale=0.17,clip]{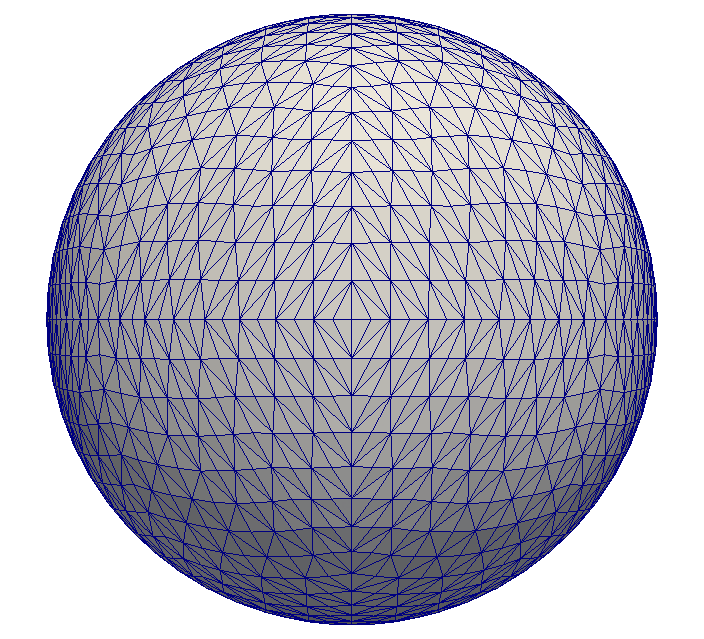} \hspace{1cm}
\includegraphics[scale=0.17,clip]{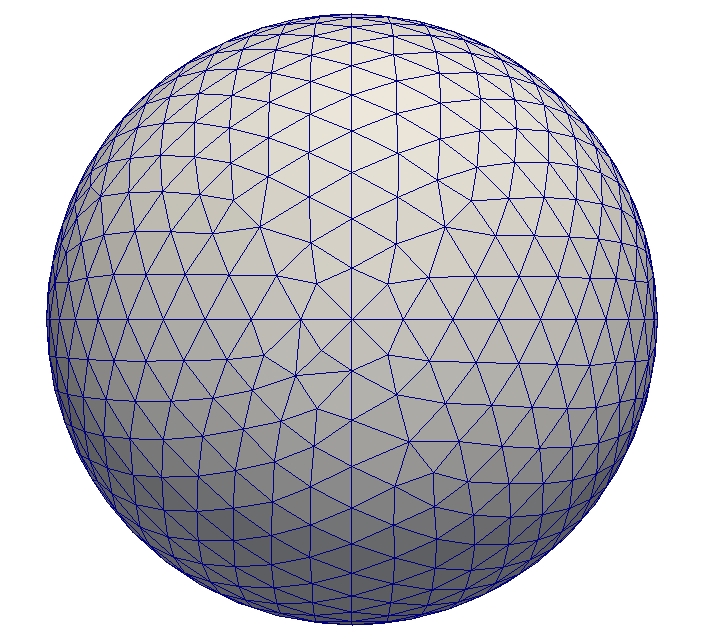}
\caption{Triangulations at $t=0.5$ with $\max_{j}h_{T_j^{n,i}}=0.2164$  (left) and $\max_{j}h_{T_j^{n,i}}=0.2443$  (right).}
\label{f:init_tri}
\end{figure}

\begin{table}[!h]
 \begin{center}
 \begin{tabular}{ |c|c|c|c|c|c|c| }
 \hline
$C_1$ & \multicolumn{2}{c|}{$0.5$} & \multicolumn{2}{c|}{$0.2$} & \multicolumn{2}{c|}{$0.1$} \\
\hline
$h_{max}$ & $\mathcal{E}$ & EOC& $\mathcal{E}$ & EOC& $\mathcal{E}$ & EOC\\
\hline
0.8359539 & 0.3746441 & -   & 0.1979334 & -    & 0.1442588 & -     \\ 
0.2164580 & 0.1508848 & 0.6731 & 0.0728049 & 0.7402  & 0.0583846 & 0.6695\\ 
0.0542420 & 0.0495660 & 0.8044  & 0.0270853 & 0.7145& 0.0744852 & -0.1760   \\ 
0.0135628 & 0.0225146 & 0.5693  & 0.0173534 & 0.3212 & 0.0601878 & 0.1538 \\ 
\hline
\end{tabular}
\caption{Non-acute triangulation, $u=e^{-0.5t}x_1x_2x_3$,  $\tau^n=0.005\min_{i,j}|E_j^{n,i}|$, $\varepsilon^n_i=C_1\max_{j}h_{T_j^{n,i}}$}
\label{table:eoc}
\end{center}
\end{table}

\begin{table}[!h]
 \begin{center}
 \begin{tabular}{ |c|c|c|c|c|c|c| }
 \hline
$C_1$ & \multicolumn{2}{c|}{$0.5$} & \multicolumn{2}{c|}{$0.2$} & \multicolumn{2}{c|}{$0.1$} \\
\hline
$h_{max}$ & $\mathcal{E}$ & EOC& $\mathcal{E}$ & EOC& $\mathcal{E}$ & EOC\\
\hline
1.1218880 & 0.3500829 & -   & 0.1834464 & -     & 0.1946865 & -    \\ 
0.2444325 & 0.0980209 & 0.8354 & 0.0453545 & 0.9170 & 0.0627731 & 0.7428 \\ 
0.0665852 & 0.0299586 & 0.9115& 0.0170005 & 0.7546   & 0.0499775 & 0.1753  \\ 
0.0173820 & 0.0083878 & 0.9479   & 0.0058230 & 0.7978 & 0.0214375 & 0.6302  \\ 
\hline
\end{tabular}
\caption{Acute triangulation, $u=e^{-0.5t}x_1x_2x_3$,  $\tau^n=0.005\min_{i,j}|E_j^{n,i}|$, $\varepsilon^n_i=C_1\max_{j}h_{T_j^{n,i}}$}
\label{table:eoc}
\end{center}
\end{table}

\begin{table}[!h]
 \begin{center}
 \begin{tabular}{ |c|c|c|c|c|c|c| }
 \hline
$C_1$ & \multicolumn{2}{c|}{$0.5$} & \multicolumn{2}{c|}{$0.2$} & \multicolumn{2}{c|}{$0.1$} \\
\hline
$h_{max}$ & $\mathcal{E}$ & EOC& $\mathcal{E}$ & EOC& $\mathcal{E}$ & EOC\\
\hline
0.8359539 & 0.1361987 & -   & 0.0638936 & -  & 0.0515144 & -    \\ 
0.2164580 & 0.0548873 & 0.6726 & 0.0273127 & 0.6290 & 0.0184992 & 0.7580  \\ 
0.0542420 & 0.0204888 & 0.7120  & 0.0118116 & 0.6057 & 0.0166502 & 0.0761  \\ 
0.0135628 & 0.0100919 & 0.5109  & 0.0069725 & 0.3803& 0.0601878 & -0.9271 \\ 
\hline
\end{tabular}
\caption{Non-acute triangulation, $u=10\sin(t)+x_1x_2x_3t$, $\tau^n=0.005\min_{i,j}|E_j^{n,i}|$, $\varepsilon^n_i=C_1\max_{j}h_{T_j^{n,i}}$}
\label{table:eoc}
\end{center}
\end{table}

\begin{table}[!h]
 \begin{center}
 \begin{tabular}{ |c|c|c|c|c|c|c| }
 \hline
$C_1$ & \multicolumn{2}{c|}{$0.5$} & \multicolumn{2}{c|}{$0.2$} & \multicolumn{2}{c|}{$0.1$} \\
\hline
$h_{max}$ & $\mathcal{E}$ & EOC& $\mathcal{E}$ & EOC& $\mathcal{E}$ & EOC\\
\hline
1.1218880 & 0.1295989 & -   & 0.0843471 & -   & 0.0714247 & -         \\ 
0.2444325 & 0.0319010 & 0.9199 & 0.0160280 & 1.0898& 0.0193385 & 0.8574  \\ 
0.0665852 & 0.0103778 & 0.8635 & 0.0045316 & 0.9714   & 0.0138817 & 0.2549  \\ 
0.0173820 & 0.0030784 & 0.9049  & 0.0018737 & 0.6576 & 0.0061684 & 0.6039  \\ 
\hline
\end{tabular}
\caption{Acute triangulation, $u=10\sin(t)+x_1x_2x_3t$, $\tau^n=0.005\min_{i,j}|E_j^{n,i}|$, $\varepsilon^n_i=C_1\max_{j}h_{T_j^{n,i}}$}
\label{table:eoc}
\end{center}
\end{table}
\end{example}

\begin{example}
We conclude with a simulation of the evolution of curves on a smoothly evolving surface, as in the motivating example in Section \ref{s:motivating}. In particular we consider the zero level set of a function as defining the curve.
We set $\Gamma(0):=\{x\in \mathbb{R}^3| x_1^2+x_2^2+2x_3^2(x_3^2-\frac{199}{200})=0.01\}$, $F=1+4x_1^2$, $\beta = (1,0.1,-0.8)^T$ 
and $u(0)=(x_3+0.3)(x_3-0.1)-0.3$, such that $\gamma(0)$ consists of two circular curves  
lying in the planes $x_3=-\sqrt{0.34}-0.1$ and $x_3=\sqrt{0.34}-0.1$. 
The velocity of the $j$-th node of the triangulation is taken to be $v_{\Gamma,j}=\pi(\sin(2\pi t)X_1^j(0),\sin(2\pi t)X_2^j(0),0.8\sin(4\pi t)X_3^j(0))$, where 
$X_i^j(0)$, $i=1,2,3$, denotes the $i$-th coordinate of the $j$-th node of the initial triangulation with $\mathbf{X}^j(0)\in \Gamma(0)$. 
The results are displayed in Figure \ref{f:db} in which the evolving curves $\gamma(t)$ are approximated by the zero level line of $u$ which is depicted by a white line. 
In this simulation we set $\tau^n = 0.01\min_{i,j}|E_j^{n,i}|$ and $\varepsilon_i^n =0.5\max_{j}h_{T_j^{n,i}}$. 

\begin{figure}[h]
\centering
\includegraphics[scale=0.24,clip]{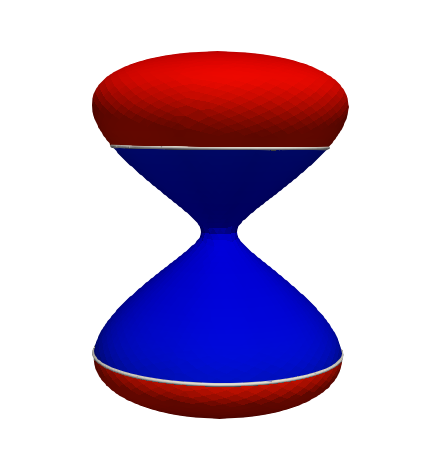} 
\includegraphics[scale=0.24,clip]{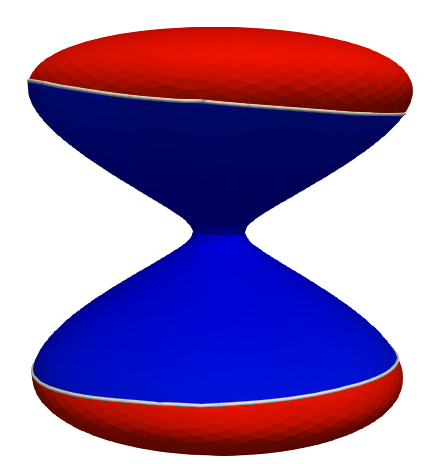}  \hspace{0mm}
\includegraphics[scale=0.24,clip]{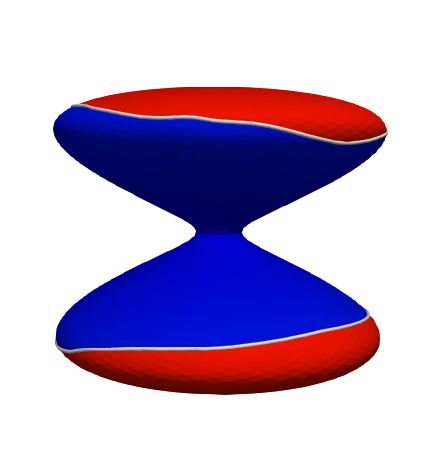}\hspace{0mm}
\includegraphics[scale=0.24,clip]{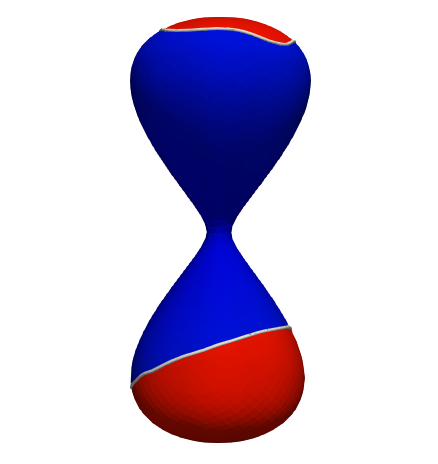}  
\caption{$t=0.0,0.2,0.4,0.6$, $F=1+4x_1^2$, $\beta = (1,0.1,-0.8)^T$.}
\label{f:db}
\end{figure}
\end{example}

\bibliographystyle{amsplain}
\providecommand{\bysame}{\leavevmode\hbox to3em{\hrulefill}\thinspace}
\providecommand{\MR}{\relax\ifhmode\unskip\space\fi MR }
\providecommand{\MRhref}[2]{%
  \href{http://www.ams.org/mathscinet-getitem?mr=#1}{#2}
}
\providecommand{\href}[2]{#2}


\begin{thebibliography}{10}

\bibitem{AdaSet03}
D.~Adalsteinsson and J.A. Sethian, \emph{Transport and diffusion of material
  quantities on propagating interfaces via level set methods}, Journal of
  Computational Physics \textbf{185} (2003), no.~1, 271--288.

\bibitem{AlpEllSti15a}
A.~Alphonse, C.~M. Elliott, and B.~Stinner, \emph{An abstract framework for
  parabolic {PDE}s on evolving spaces}, Portugaliae Mathematica \textbf{72}
  (2015), no.~1, 1--46.

\bibitem{AlpEllSti15}
\bysame, \emph{On some linear parabolic {PDE}s on moving hypersurfaces},
  Interfaces and Free Boundaries \textbf{17} (2015), 157--187.

\bibitem{BC97book}
M.~Bardi and I.~Capuzzo-Dolcetta, \emph{Optimal control and viscosity solutions
  of {H}amilton-{J}acobi-{B}ellman equations}, Systems \& Control: Foundations
  \& Applications, Birkh\"auser Boston, Inc., Boston, MA, 1997, With appendices
  by Maurizio Falcone and Pierpaolo Soravia. \MR{1484411}

\bibitem{Bar13}
G.~Barles, \emph{An introduction to the theory of viscosity solutions for first
  order {H}amilton-{J}acobi equations and applications}, {H}amilton-{Jacobi}
  equations:Approximation, numerical analysis and applications (P.~Loretti and
  N.~A. Tchou, eds.), Lecture Notes in Mathematics, vol. 2074, Springer Berlin
  / Heidelberg, 2013.

\bibitem{BarGarNur15}
J.~W. Barrett, H.~Garcke, and R.~N\"urnberg, \emph{On the stable numerical
  approximation of two-phase flow with insoluble surfactant}, ESAIM: M2AN
  \textbf{49} (2015), no.~2, 421--458.

\bibitem{BreDuEll16}
T.~Bretschneider, Cheng-Jin Du, C.~M. Elliott, T.~Ranner, and B.~Stinner,
  \emph{Solving reaction-diffusion equations on evolving surfaces defined by
  biological image data}, arXiv preprint arXiv:1606.05093 (2016).

\bibitem{CheBurMer02}
L.-T. Cheng, P.~Burchard, B.~Merriman, and S.~Osher, \emph{Motion of curves
  constrained on surfaces using a level set approach}, J. Comput. Phys.
  \textbf{175} (2002), 604--644.

\bibitem{CraLio84}
M.~G. Crandall and P.-L. Lions, \emph{Two approximations of solutions to
  {H}amilton-{J}acobi equations}, Math. Comp. \textbf{43} (1984), 1--19.

\bibitem{DziEll07-a}
G.~Dziuk and C.~M. Elliott, \emph{Finite elements on evolving surfaces}, IMA J.
  Numer. Anal. \textbf{25} (2007), 385--407.

\bibitem{DE13Acta}
\bysame, \emph{Finite element methods for surface {PDE}s}, Acta Numer.
  \textbf{22} (2013), 289--396. \MR{3038698}

\bibitem{DziKroMul13}
G.~Dziuk, D.~Kr\"oner, and T.~M\"uller, \emph{Scalar conservation laws on
  moving hypersurfaces}, Interfaces and Free Boundaries \textbf{15} (2013),
  no.~2, 203--236.

\bibitem{EilEll08}
C.~Eilks and C.~M. Elliott, \emph{Numerical simulation of dealloying by surface
  dissolution via the evolving surface finite element method}, Journal of
  Computational Physics \textbf{227} (2008), no.~23, 9727--9741.

\bibitem{EllStiVen12}
C.~M. Elliott, B.~Stinner, and C.~Venkataraman, \emph{Modelling cell motility
  and chemotaxis with evolving surface finite elements}, Journal of the Royal
  Society Interface \textbf{9} (2012), no.~76, 3027--3044.

\bibitem{GM14NM}
J.~Giesselmann and T.~M\"uller, \emph{Geometric error of finite volume schemes
  for conservation laws on evolving surfaces}, Numer. Math. \textbf{128}
  (2014), no.~3, 489--516. \MR{3268845}

\bibitem{G06book}
Y.~Giga, \emph{Surface evolution equations: A level set approach}, Monographs
  in Mathematics, vol.~99, Birkh\"auser Verlag, Basel, 2006. \MR{2238463}

\bibitem{JanOlsReu17}
T.~Jankuhn, M.~A. Olshanskii, and A.~Reusken, \emph{Incompressible fluid
  problems on embedded surfaces: modeling and variational formulations}, Tech.
  Report 462, {RWTH} Aachen University, 2017.

\bibitem{KL15JCM}
K.~Kim and Y.~Li, \emph{Convergence of finite volume schemes for
  {H}amilton-{J}acobi equations with {D}irichlet boundary conditions}, J.
  Comput. Math. \textbf{33} (2015), no.~3, 227--247. \MR{3352357}

\bibitem{KMS99}
G.~Kossioris, Ch. Makridakis, and P.~E. Souganidis, \emph{Finite volume schemes
  for {H}amilton-{J}acobi equations}, Numer. Math. \textbf{83} (1999),
  427--442.

\bibitem{KovPow15}
B.~Kovacs and C.~A.~Power Guerra, \emph{Error analysis for full discretizations
  of quasilinear parabolic problems on evolving surfaces.}, Numerical Methods
  for Partial Differential Equations \textbf{32} (2015), no.~4, 1200--1231.

\bibitem{LenMul13}
D.~Lengeler and T.~M\"uller, \emph{Scalar conservation laws on constant and
  time-dependent {R}iemannian manifolds}, J. Differ. Equations \textbf{254}
  (2013), 1705--1727.

\bibitem{LenNemRum11}
M.~Lenz, S.~F. Nemadjieu, and M.~Rumpf, \emph{A convergent finite volume scheme
  for diffusion on evolving surfaces}, SIAM J. Numer. Anal. \textbf{49} (2011),
  no.~1, 15--37.

\bibitem{LiYanChan03}
X.-G. Li, W.~Yan, and C.~K. Chan, \emph{Numerical schemes for
  {H}amilton-{J}acobi equations on unstructured meshes}, Numer. Math.
  \textbf{94} (2003), 315--331.

\bibitem{MacRuu08}
C.~B. Macdonald and S.~J. Ruuth, \emph{Level set equations on surfaces via the
  closest point method}, Journal of Scientific Computing \textbf{35} (2008),
  219--240.

\bibitem{ManMen03}
C.~Mantegazza and A.~C. Mennucci, \emph{Hamilton---{J}acobi equations and
  distance functions on {R}iemannian manifolds}, Applied Mathematics {\&}
  Optimization \textbf{47} (2003), no.~1, 1--25.

\bibitem{OlsReu14}
M.~Olshanskii and A.~Reusken, \emph{Error analysis of a space-time finite
  element method for solving pdes on evolving surfaces}, SIAM Journal on
  Numerical Analysis \textbf{52} (2014), no.~4, 2092--2120.

\bibitem{TeiLiLow09}
K.~E. Teigen, X.~Li, J.~Lowengrub, F.~Wang, and A.~Voigt, \emph{A
  diffuse-interface approach for modeling transport, diffusion and
  adsorption/desorption of material quantities on a deformable interface},
  Communications in Mathematical Sciences \textbf{7} (2009), no.~4, 1009--1037.

\bibitem{Vie14}
M.~Vierling, \emph{Parabolic optimal control problems on evolving surfaces
  subject to point-wise box constraints on the control - theory and numerical
  realization}, Interfaces and Free Boundaries \textbf{16} (2014), no.~2,
  137--173.

\bibitem{XuZha03}
J.-J. Xu and H.-K. Zhao, \emph{An {E}ulerian formulation for solving partial
  differential equations along a moving interface}, Journal of Scientific
  Computing \textbf{19} (2003), 573--594.

\end{thebibliography}

\end{document}